\newtheorem{definition}{Definition}
\newtheorem{theorem}[definition]{Theorem}
\newtheorem{corollary}[definition]{Corollary}
\newtheorem{lemma}[definition]{Lemma}
\newcommand{\FF}{{\mathbb F}}
\newcommand{\C}{{\mathcal C}}
\newcommand{\Cfin}{{\mathcal C}_{\rm fin}}
\newcommand{\CC}{{\mathbb C}}
\newcommand{\E}{{\mathcal E}}
\newcommand{\N}{{\mathbb N}}
\newcommand{\RR}{{\mathbb R}}
\newcommand{\Tbar}{{\overline T}}
\newcommand{\U}{{\mathcal U}}
\newcommand{\Z}{{\mathbb Z}}
\newcommand{\interior}{\mathaccent"7017\relax}
\newcommand{\sm}{\setminus}
\newcommand{\ke}{{\rm Ker\>}}
\newcommand{\im}{{\rm Im\>}}
\let\eps=\varepsilon
\let\sub=\subseteq
\let\subset=\subseteq
\let\phi=\varphi
\let\es=\emptyset
\newcommand\fes{(f_e)_\sharp}
\newcommand\restr{\!\restriction\!}
\newcommand{\noproof}{\unskip\nobreak\hfill\penalty50\hskip2em\hbox{}\nobreak\hfill%
       $\square$\parfillskip=0pt\finalhyphendemerits=0\par}
\newcommand{\COMMENT}[1]{}
\newcommand{\emtext}[1]{\text{\em #1}}
\newenvironment{txteq}
  {\begin{equation}\begin{minipage}[c]{0.8\textwidth}\em}
  {\end{minipage}\ignorespacesafterend\end{equation}\ignorespacesafterend}
\newcommand{\assign}{
  \mathrel{\mathop{:}}=
}
\def\lowfwd #1#2#3{{\setbox0\hbox{$#1$}\setbox1\hbox{$E'\!$}
            \mathchoice
            {{\mathop{\kern0pt #1}\limits^{\kern#2pt\raise.#3ex
     \vbox to 0pt{\hbox{$\scriptscriptstyle\rightarrow$}\vss}}}}%
            {{\mathop{\kern0pt #1}\limits^{\kern#2pt\raise.#3ex
     \vbox to 0pt{\hbox{$\scriptscriptstyle\rightarrow$}\vss}}}}%
            {\ifdim\wd0<\wd1{\,\vec{#1}\,}\else
     {\mathop{\kern0pt #1}\limits^{\kern#2pt\raise.0ex
     \vbox to 0pt{\hbox{$\scriptscriptstyle\rightarrow$}\vss}}}\fi}%
            {{\vec{#1}}}%
            }}
\def\fwd #1#2{{\lowfwd{#1}{#2}{15}}}
\def\lowbkwd #1#2#3{{\mathop{\kern0pt #1}\limits^{\kern#2pt\raise.#3ex
     \vbox to 0pt{\hbox{$\scriptscriptstyle\leftarrow$}\vss}}}}
\def\vC{\kern-1pt\fwd C3\kern-.5pt}
\def\vCC{\kern-.7pt\fwd{\C}3\kern-.7pt}
\def\vd{\kern-1pt\lowfwd d2{10}\kern-1pt}
\def\vD{\kern-.7pt\fwd D3\kern-.5pt}
\def\ve{\kern-1pt\lowfwd e{1.5}1\kern-1pt}
\def\vf{\kern-1pt\lowfwd f{1.5}1\kern-1pt}
\def\fv{\kern-1pt\lowbkwd f{1.5}1\kern-1pt}
\def\ev{\kern-1pt\lowbkwd e{1.5}1\kern-1pt}
\def\veStar{{\mathop{\kern0pt e\lower1.5pt\hbox{${}^*$}}\limits^{\kern0pt
   \raise.02ex\vbox to 0pt{\hbox{$\scriptscriptstyle\rightarrow$}\vss}}}}
\def\eStarv{{\mathop{\kern0pt e\lower1.5pt\hbox{${}^*$}}\limits^{\kern0pt
   \raise.02ex\vbox to 0pt{\hbox{$\scriptscriptstyle\leftarrow$}\vss}}}}
\def\vedash{{\mathop{\kern0pt e\lower.5pt\hbox{${}% logically \v(e')
     \scriptstyle'$}}\limits^{\kern0pt\raise.02ex
     \vbox to 0pt{\hbox{$\scriptscriptstyle\rightarrow$}\vss}}}}
\def\vE{\kern-.7pt\fwd E3\kern-.7pt}
\def\vEE{\kern-.7pt\fwd{\E}3\kern-.7pt}
\def\vF{\kern-.7pt\fwd F3\kern-.7pt}
\def\vG{\kern-.7pt\fwd G3\kern-.7pt}
\def\vH{\kern-.5pt\fwd H3\kern-.5pt}
\def\vP{\kern-.7pt\fwd P3\kern-.6pt}
\def\specrel#1#2{\mathrel{\mathop{\kern0pt #1}\limits_{#2}}}
\title{On the homology of locally finite graphs}
\author{Reinhard Diestel and Philipp Spr\"ussel}
 \date{}
\begin{document}      

\maketitle

\begin{abstract}
We show that the topological cycle space of a locally finite graph is a canonical quotient of the first singular homology group of its Freudenthal compactification, and we characterize the graphs for which the two coincide. We construct a new singular-type homology for non-compact spaces with ends, which in dimension~1 captures precisely the topological cycle space of graphs but works in any dimension.
   \end{abstract}

\section{Introduction}

Graph homology is traditionally, and conveniently, simplicial: a graph $G$ is viewed as a 1-complex, and one considers its first simplicial homology group. In graph theory, coefficients are typically taken from a field such as~$\FF_2$, $\RR$ or~$\CC$, which makes the group into a vector space called the {\it cycle space\/} of~$G$.

For reasons to become apparent later we denote this space as $\Cfin = \Cfin(G)$. For the moment it will suffice to take our coefficients from $\FF_2$ and interpret the elements of $\Cfin$ as sets of edges.  For finite graphs~$G$, there are a number of classical theorems relating $\Cfin(G)$ to other properties of~$G$, such as planarity. (Think of MacLane's or Whitney's theorem, or the Kelmans-Tutte planarity criterion.) The cycle space $\Cfin$ has thus become one of the standard aspects of finite graphs used in their structural analysis.

When $G$ is infinite, however, the space $\Cfin$ no longer adequately describes the homology of~$G$. Most of the theorems describing the interaction of $\Cfin$ with other properties of~$G$---including all those cited above---fail when $G$ is infinite. However, the traditional role of the cycle space in these cases can be restored by defining it slightly differently: when $G$ is locally finite, one takes as generators not the edge sets of the (finite) cycles in~$G$---as one would to generate~$\Cfin$---but the (possibly infinite) edge sets of all its {\it topological circles\/}, the homeomorphic images of  the circle $S^1$ in the Freudenthal compactification $|G|$ of $G$ by its ends. (One also has to allow infinite sums in the generating process; for these to be well-defined, each edge may occur in only finitely many terms.) We denote this more general space, the {\it topological cycle space\/} of~$G$, by $\C = \C(G)$.

The space $\C$ had not been considered in graph theory before \cite{CyclesI} appeared, and it has been surprisingly successful at extending the classical cycle space theory of finite graphs to locally finite graphs; see e.g.~\cite{LocFinTutte,Duality,Partition,LocFinMacLane,AgelosFleisch,Arboricity}, or \cite{RDsBanffSurvey} for a survey. However, a question raised in~\cite{CyclesIntro} but still unanswered is how new, from a topological viewpoint, is the homology described implicitly by~$\C$. It is the purpose of this paper to clarify this relationship.

Our first result says that there is indeed a classical homology theory whose first group is isomorphic to~$\C$: the \v{C}ech homology of~$|G|$. However, the group of~$\C(G)$ as such does not carry all the information that makes it relevant to the study of the (combinatorial) structure of~$G$; one also needs to know, for example, which group elements correspond to circuits and which do not. These details are lost in the transition between $\C$ and the \v{C}ech homology, which is why we do not pursue this approach further.

Since topological circles are (images of simplices representing) singular 1-cycles in~$|G|$, it is also natural to ask how closely $\C(G)$ is related to the first singular homology group of~$|G|$. Indeed it is not clear whether the two coincide by some natural canonical isomorphism, so that $\C(G)$ would be just another way of looking at~$H_1(|G|)$.

Our first major aim in this paper is to answer this question. We begin by studying the homomorphism $f\colon H_1(|G|)\to\C(G)$ that should serve as the desired canonical isomorphism if indeed there is one. Surprisingly, this homomorphism is easily seen to be surjective. However, it turns out that it usually has a non-trivial kernel. Thus~$\C(G)$, despite looking `larger' because we allow infinite sums in its generation from elementary cycles, turns out to be a (usually proper) quotient of~$H_1(|G|)$.

For the proof that $f$ has a non-trivial kernel we have to go some way towards the solution of another problem (solved fully in~\cite{FundGp}): to find a combinatorial description of the fundamental group of the space~$|G|$ for an arbitrary connected locally finite graph~$G$.%
   \footnote{Covering space theory does not apply since, trivial exceptions aside, $|G|$~is not semi-locally simply connected at ends.}
 We describe~$\pi_1(|G|)$, as for finite~$G$, in terms of reduced words in the oriented chords of a spanning tree. However, when $G$ is infinite this does not work for arbitrary spanning trees; we have to allow infinite words of any countable order type; and reduction by cancelling adjacent inverse sequences of letters does not suffice. However, the kind of reduction we need can be described in terms of word reductions in the free groups $F_I$ on all the finite subsets $I$ of chords, which enables us to embed the group $F_\infty$ of infinite reduced words in the inverse limit of those~$F_I$, and handle it in this form. On the other hand, mapping a loop in $|G|$ to the sequence of chords it traverses, and then reducing that sequence (or word), turns out to be well defined on homotopy classes and hence defines an embedding of $\pi_1(|G|)$ as a subgroup in~$F_\infty$. This combinatorial description of $\pi_1(|G|)$ then enables us to define an invariant on 1-chains in $|G|$ that can distinguish some elements of the kernel of $f$ from boundaries of singular 2-chains, completing the proof that $f$ need not be injective.

Our second aim, then, is to begin to reconcile these different treatments of the homology of non-compact spaces between topology and graph theory. In Section~7 we present a first solution to this problem: we define a natural singular-type homology which, applied to graphs, captures precisely their topological cycle space. Essentially, we shall allow infinite sums of cycles and boundaries when building their respective groups, but start from \emph{finite} chains with zero boundary as generators. Thus, topological circles are 1-cycles, as desired. But if $G$ is a 2-way infinite path, then its edges form an infinite 1-chain with zero boundary that is \emph{not} a 1-cycle, because it is not a (possibly infinite) sum of {\em finite\/} 1-cycles. Our homology thus lies between the usual singular homology and the `open homology' that is built from arbitrary locally finite chains without any further restriction (see, eg, \cite[Ch.~8.8]{FuchsViro}).

Formally, we define our homology in Section~7 in a very general setting: all we require is a topological space in which some points are distinguished as `ends'. A~drawback of this combination of simplicity with generality is that although we can define the groups as desired, our definitions do not lead to a homology theory in the full axiomatic sense. However, it is possible to do that too: to construct a singular homology theory that does satisfy the axioms and which, for graphs, is equivalent to the homology of Section~7 and hence to the topological cycle space. We may thus view our intuitive homology of Section~7 as a stepping stone towards this more general theory, to be developed in~\cite{Hom2}, which will work for any locally compact space with ends. In both settings, ends play a role that differs crucially from that of ordinary points, which enables this homology to capture the properties of the space itself in a way similar to how the topological cycle space describes a locally finite graph.

Our hope with this paper is to stimulate further work in two directions. One is that its new topological guise makes the cycle space $\C$ accessible to topological methods that might generate some windfall for the study of graphs. And conversely, that as the approach that gave rise to~$\C$ is made accessible to more general spaces and higher dimensions, its proven usefulness for graphs might find some more general topological analogues---perhaps based on the homology theory developed in~\cite{Hom2} from the ideas presented in this paper.

\section{Terminology and basic facts}

In this section we briefly run through any non-standard terminology we use. We also list without proof a few easy lemmas that we shall need, and use freely, later on.

For graphs we use the terminology of~\cite{DiestelBook05}, for topology that of Hatcher~\cite{Hatcher}. We reserve the word `boundary' for homologousal contexts and use `frontier' for the closure of a set minus its interior. Our use of the words `path', `cycle' and `loop', where these terminologies conflict, is as follows. The word {\em path\/} is used in both senses, according to context (such as `{\em path in~$X$\/}', where $X$ was previously introduced as a graph or as a topological space). Note that while topological paths need not be injective, graph-theoretical paths are not allowed to repeat vertices or edges. The term {\em cycle\/} will be used in the topological sense only, for a (usually 1-dimensional) singular chain with zero boundary. When we do need to speak about graph-theoretic cycles (i.e., about finite connected graphs in which every vertex has exactly two incident edges) we shall instead refer to the edge sets of those graphs, which we shall call \emph{circuits}.
Our graphs may have multiple edges but no loops. This said, we shall from now on use the term {\em loop\/} topologically: for a topological path $\sigma\colon [0,1]\to X$ with $\sigma(0) = \sigma(1)$. This loop is {\em based at\/} the point~$\sigma(0)$. Given any path $\sigma\colon [0,1]\to X$, we write $\sigma^-\colon s\mapsto \sigma(1-s)$ for the inverse path. An \emph{arc} in a topological space is a subspace homeomorphic to~$[0,1]$.

\begin{lemma}[{\cite[p.~208]{ElemTop}}]\label{arc}
The image of a topological path with distinct endpoints $x,y$ in a Hausdorff space $X$ contains an arc in $X$ between $x$ and~$y$.
\end{lemma}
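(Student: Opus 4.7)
The plan is to prune the loops of $\sigma$ via a Zorn argument on closed subsets of $[0,1]$ along which $\sigma$ becomes essentially injective.

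First I would reduce to the case $\sigma^{-1}(x)=\{0\}$ and $\sigma^{-1}(y)=\{1\}$: the preimages are closed because $X$ is Hausdorff, so restricting $\sigma$ to $[s_0,t_0]$ with $s_0:=\sup\sigma^{-1}(x)$ and $t_0:=\inf\bigl(\sigma^{-1}(y)\cap[s_0,1]\bigr)$, then reparametrizing, achieves this with $s_0<t_0$ since $x\neq y$. Next, let $\F$ be the collection of closed $F\sub[0,1]$ containing $\{0,1\}$ such that $\sigma(a)=\sigma(b)$ for every connected component $(a,b)$ of $[0,1]\sm F$, ordered by inclusion; as $[0,1]\in\F$, I apply Zorn to obtain a minimal~$F$. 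For the chain step, given a chain $(F_\alpha)$ in $\F$ with intersection $F_\infty$, suppose a gap $(a,b)$ of $F_\infty$ satisfied $\sigma(a)\neq\sigma(b)$, and separate these by disjoint open $U\ni\sigma(a)$, $V\ni\sigma(b)$. Each $F_\alpha\cap[a,b]$ is closed, contains $\{a,b\}$, and its gaps in $[a,b]$ are gaps of $F_\alpha$ in $[0,1]$; extending $\sigma|_{F_\alpha\cap[a,b]}$ by the constant value $\sigma(c)$ on each such gap $(c,d)$ yields a continuous map $[a,b]\to X$ whose image $\sigma(F_\alpha\cap[a,b])$ is therefore connected, forcing some $u_\alpha\in F_\alpha\cap[a,b]$ with $\sigma(u_\alpha)\notin U\cup V$. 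Compactness of $[a,b]$ gives a subnet limit $u\in\bigcap_\alpha F_\alpha\cap[a,b]=\{a,b\}$ with $\sigma(u)\notin U\cup V$, a contradiction.

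For a minimal $F$, I would show that any $s<t$ in $F$ with $\sigma(s)=\sigma(t)$ must satisfy $(s,t)\cap F=\emptyset$: otherwise $F\sm(s,t)$ remains in $\F$ (the only new gap is $(s,t)$, whose endpoints match by hypothesis), contradicting minimality. Then the gap-quotient $\widehat F$, obtained by identifying the two endpoints of each gap of $F$, is a compact, connected, linearly ordered space with distinct smallest and largest elements $0,1$---hence homeomorphic to $[0,1]$ by a standard fact---and $\sigma$ descends to a continuous \emph{injection} $\bar\sigma\colon\widehat F\to X$. By the compact-to-Hausdorff principle $\bar\sigma$ is a homeomorphism onto its image, which is the required arc from $x$ to~$y$ inside $\sigma([0,1])$.

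The main obstacle is the chain step in the Zorn argument: the gap-matching condition does not pass to intersections of chains in a purely set-theoretic way, and to force it one has to combine Hausdorff separation with the connectedness of the image $\sigma(F_\alpha\cap[a,b])$.
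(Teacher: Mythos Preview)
The paper does not prove this lemma; it is quoted from a textbook reference without proof, so there is nothing to compare against. Your argument follows a classical route and is essentially sound, but one step needs correction.

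The ``standard fact'' you invoke---that a compact, connected, linearly ordered space with distinct least and greatest elements is homeomorphic to $[0,1]$---is false as stated: the closed long ray $[0,\omega_1]$ with its order topology is a counterexample. One needs a countability hypothesis such as separability or metrizability. Your $\widehat F$ does satisfy this, being the Hausdorff quotient of a compact metric space by a closed equivalence relation, but you should say so explicitly. Alternatively, you can bypass the order-theoretic characterization altogether: define $\pi\colon[0,1]\to\widehat F$ by sending $t\in F$ to its class $[t]$ and each $t$ in a gap $(a,b)$ to $[a]=[b]$. This is a continuous monotone surjection whose non-trivial fibres are the closed gaps $[a,b]$. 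Your minimality step shows no point of $F$ is simultaneously the right endpoint of one gap and the left endpoint of another (else three points of $F$ would share a $\sigma$-value, contradicting that any two with equal image bound a gap), so these fibres are pairwise disjoint closed intervals, and the quotient of $[0,1]$ collapsing each such interval to a point is again homeomorphic to $[0,1]$. This route also yields the connectedness of $\widehat F$, which you asserted without argument.
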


All homotopies between paths that we consider are relative to the first and last point of their domain, usually~$\{0,1\}$. We shall often construct homotopies between paths segment by segment. The following lemma enables us to combine certain homotopies defined separately on infinitely many segments.

\begin{lemma}\label{infhomotopies}
  Let $\alpha,\beta$ be paths in a topological space $X$. Assume that there is a sequence $(a_0,b_0),(a_1,b_1),\dotsc$ of disjoint subintervals of $[0,1]$ such that $\alpha$ and $\beta$ conincide on $[0,1]\sm\bigcup_n(a_n,b_n)$, while each segment $\alpha \restr [a_n,b_n]$ is homotopic in $\alpha([a_n,b_n])\cup\beta([a_n,b_n])$ to $\beta \restr [a_n,b_n]$. Then $\alpha$ and $\beta$ are homotopic.
\end{lemma}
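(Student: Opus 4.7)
The plan is to define a homotopy $H\colon [0,1]\times[0,1]\to X$ piecewise: on each strip $[a_n,b_n]\times[0,1]$ use the given homotopy $H_n$ between $\alpha\restr[a_n,b_n]$ and $\beta\restr[a_n,b_n]$, and off these strips set $H(s,t):=\alpha(s)=\beta(s)$. Well-definedness at the common endpoints is immediate, since each $H_n$ is rel endpoints and $\alpha(a_n)=\beta(a_n)$, $\alpha(b_n)=\beta(b_n)$; the disjointness of the intervals $(a_n,b_n)$ prevents any other overlap. The boundary conditions $H(\cdot,0)=\alpha$, $H(\cdot,1)=\beta$, and $H(0,\cdot),H(1,\cdot)$ constant, are then clear, and everything reduces to showing that $H$ is continuous.

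Continuity is automatic on the interior of each strip $[a_n,b_n]\times[0,1]$. The real obstacle is continuity at points $(s_0,t_0)$ whose first coordinate lies outside all the open intervals $(a_n,b_n)$ but is a limit of them; the boundary cases $s_0\in\{a_n,b_n\}$ reduce to this one by separately invoking continuity of $H_n$ from inside the corresponding strip. The naive worry here is that the homotopies $H_{n'}$ on nearby strips might take values far from $\alpha(s_0)$, since $H_{n'}$ is only constrained to land in $\alpha([a_{n'},b_{n'}])\cup\beta([a_{n'},b_{n'}])$.

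What rescues the argument is the combinatorial fact that any family of pairwise disjoint open subintervals of $[0,1]$ contains only finitely many members of length $\ge\eps$, for any $\eps>0$. Given an open neighbourhood $U$ of $\alpha(s_0)=H(s_0,t_0)$, I would use continuity of $\alpha$ and $\beta$ to pick $\delta>0$ with $\alpha([s_0-\delta,s_0+\delta])\cup\beta([s_0-\delta,s_0+\delta])\subset U$. Only finitely many intervals $(a_n,b_n)$ have length $\ge\delta/3$; since $s_0$ lies outside each of their closures, one can shrink to some $\delta'\le\delta/3$ so that $(s_0-\delta',s_0+\delta')$ avoids all of them. Any other interval $(a_n,b_n)$ meeting $(s_0-\delta',s_0+\delta')$ has length $<\delta/3$ and hence lies entirely in $(s_0-\delta,s_0+\delta)$, so $H_n$ maps its whole strip into $\alpha([a_n,b_n])\cup\beta([a_n,b_n])\subset U$. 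Hence for every $s\in(s_0-\delta',s_0+\delta')$ and every $t$, either $s$ is outside every $(a_n,b_n)$ and $H(s,t)=\alpha(s)\in U$, or $s$ lies in a short interval and $H(s,t)\in U$ by the preceding. This yields continuity and completes the construction of the required homotopy.
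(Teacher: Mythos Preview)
Your proposal is correct and follows the same overall strategy as the paper: define the homotopy piecewise from the given $H_n$ and the constant homotopy off $D=\bigcup_n(a_n,b_n)$, then verify continuity at points $s_0\notin D$ by showing that any interval $(a_n,b_n)$ close to $s_0$ is entirely contained in a set where $\alpha$ and $\beta$ already map into the target neighbourhood~$U$.

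The one genuine difference is in how you achieve this containment. You invoke the combinatorial fact that only finitely many disjoint intervals can have length $\ge\delta/3$, then shrink to a $\delta'$ avoiding the long ones, so that any surviving interval is short enough to fit inside $(s_0-\delta,s_0+\delta)$. The paper instead uses a simpler trick: since $[0,1]\setminus D$ is dense near~$s_0$ (on the side under consideration), one may choose the endpoint $s_0-\eps$ itself to lie in $[0,1]\setminus D$; then any $(a_n,b_n)$ meeting $(s_0-\eps,s_0]$ is automatically trapped inside $(s_0-\eps,s_0)$, with no counting needed. Both arguments are valid; the paper's avoids the auxiliary length bound but yours is perfectly sound. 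One small imprecision in your write-up: when you say ``since $s_0$ lies outside each of their closures'' you should allow for the possibility that $s_0$ is an endpoint of one of the long intervals---but as you note, that side is handled by continuity of the corresponding $H_n$, so the shrinking only needs to avoid the remaining long intervals on the other side.
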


\begin{proof}
  Write $D\assign\bigcup_n(a_n,b_n)$. For every $n\in\N$ let $F^n={(f^n_t)}_{t\in[0,1]}$ be a homotopy in $\alpha([a_n,b_n])\cup\beta([a_n,b_n])$ between $\alpha \restr [a_n,b_n]$ and $\beta \restr [a_n,b_n]$. We define the desired homotopy $F={(f_t)}_{t\in[0,1]}$ between $\alpha$ and $\beta$ as
  \begin{equation*}
    f_t(x)\assign
    \begin{cases}
      f^n_t(x) & \text{if }x\in(a_n,b_n),\\
      \alpha(x)=\beta(x) & \text{if }x\in[0,1]\sm D.
    \end{cases}
  \end{equation*}
  Clearly, $f_0=\alpha$ and $f_1=\beta$. It remains to prove that $F$ is continuous.

Let $x,t\in[0,1]$ and a neighbourhood $U$ of $F(x,t)$ in $X$ be given. We find an $\eps>0$ so that $F((x-\eps,x],(t-\eps,t+\eps)) \subset U$; the case $F([x,x+\eps),(t-\eps,t+\eps)) \subset U$ is analogous. Suppose first that there is an $\eps_0>0$ such that $(x-\eps_0,x)\subset D$. As the intervals $(a_i,b_i)$ are disjoint, this means that $(x-\eps,x)\subset(a_n,b_n)$ for some~$n$. Then $(x-\eps_0,x]\subset[a_n,b_n]$, and hence $F \restr (x-\eps_0,x]\times[0,1] = F^n \restr (x-\eps_0,x]\times[0,1]$. As $F^n$ is continuous, there is an $\eps<\eps_0$ with $F((x-\eps,x],(t-\eps,t+\eps)) \subset U$.

Now suppose that for every $\eps > 0$ the interval $(x-\eps,x)$ meets $[0,1]\sm D$. Then also $x\in [0,1]\sm D$, and hence $F(x,t)=\alpha(x)=\beta(x)$. Pick $\eps>0$ with $x-\eps\in[0,1]\sm D$ small enough that both $\alpha$ and $\beta$ map $[x-\eps,x]$ into $U$. Then $F((x-\eps,x],(t-\eps,x+\eps))\subset U$. Indeed, for every $x'\in(x-\eps,x]\sm D$ and every $t'\in(t-\eps,t+\eps)$ we have $F(x',t')=\alpha(x')=\beta(x')\in U$. On the other hand, for every $x'\in(x-\eps,x]\cap D$ and $t'\in(t-\eps,t+\eps)$ we have $x'\in(a_n,b_n)$ for some $n$. As $x$ and $x-\eps$ lie in $[0,1]\sm D$, we have $(a_n,b_n)\subset(x-\eps,x)$ and hence $F(x',t')=F^n(x',t')\in\alpha([a_n,b_n])\cup\beta([a_n,b_n])\subset U$.
\end{proof}

All the CW-complexes we consider will be \emph{locally finite}: every point has an open neighbourhood meeting only finitely many cells. Note that a compact subset of such a complex can meet the closures of only finitely many cells, and that locally finite CW-complexes are metrizable~\cite[Ch.~II, Prop.~3.8]{LundellWeingram} and thus first-countable.

Locally finite CW-complexes can be compactified by adding their \emph{ends}. This compactification can be defined, without reference to the complex, for any connected, locally connected, locally compact topological space $X$ with a countable basis. Very briefly, an \emph{end} of $X$ is an equivalence class of sequences $U_1\supseteq U_2\supseteq \ldots$ of connected non-empty open sets with compact frontiers and an empty overall intersection of closures, $\bigcap_n\overline U_n = \emptyset$, where two such sequences $(U_n)$ and $(V_m)$ are \emph{equivalent} if every $U_n$ contains all sufficiently late $V_m$ and vice versa. This end is said to \emph{live in} each of the sets~$U_n$, and every $U_n$ together with all the ends that live in it is \emph{open} in the space whose point set is the union of $X$ with the set $\Omega(X)$ of its ends and whose topology is generated by these open sets and those of~$X$. This is a compact space, the \emph{Freudenthal compactification} of~$X$ \cite{Freudenthal31, Freudenthal42}. More topological background on this can be found in~\cite[Ch.\ I.9]{BauesQuintero}; for applications to groups see e.g.~\cite{RoggiEndsI, RoggiEndsII, ThomassenWoess, woessBook}.

For graphs, ends and the Freudenthal compactification are more usually defined combinatorially, as follows~\cite[Ch.~8.5]{DiestelBook05},~\cite{halin64, jung71}. Let $G$ be a connected locally finite graph. A 1-way infinite path in $G$ is a \emph{ray}. Two rays are \emph{equivalent} if no finite set of vertices separates them in~$G$, and the resulting equivalence classes are the \emph{ends} of~$G$. It is not hard to see that this combinatorial definition of an end coincides with the topological one given earlier for locally finite complexes.%
   \footnote{For graphs that are not locally finite, the two concepts differ~\cite{Ends}.}
   The Freudenthal compactification of~$G$ is now denoted by~$|G|$; its topology is generated by the open sets of $G$ itself (as a 1-complex) and the sets $\hat C (S,\omega)$ defined for every end $\omega$ and every finite set $S$ of vertices, as follows. $C(S,\omega) =: C$ is the unique component of $G-S$ in which $\omega$ \emph{lives} (i.e., in which every ray of~$\omega$ has a \emph{tail}, or subray), and $\hat C (S,\omega)$ is the union of $C$ with the set of all the ends of $G$ that live in $C$ and the (finitely many) open edges between $S$ and~$C$.%
   \footnote{The definition given in~\cite{DiestelBook05} is formally more general, but equivalent to the simpler definition given here when $G$ is locally finite. Generalizations are studied in~\cite{KroenEnds, ThomassenVellaContinua}.}
Note that the frontier of $\hat C (S,\omega)$ in $|G|$ is a subset of~$S$, and that every ray converges to the end containing it. See \cite{RDsBanffSurvey} for (much) more on~$|G|$.

The end structure of $G$ is best reflected by a \emph{normal spanning tree}; such trees exist in every connected countable graph~\cite{DiestelBook05,jung69}. A spanning tree $T$ of~$G$ with root~$r$ is \emph{normal} if the vertices $u,v$ of every edge $e=uv$ of $G$ are comparable in the order $\le$ which $(T,r)$ induces on~$V(G)$. (Recall that $u\le v$ if $u$ lies on the unique $r$--$v$ path $rTv$ of $T$ between $r$ and~$v$.) A key property of normal spanning trees is that the intersection of the down-closures of two vertices separates them in~$G$. This implies that every end of $G$ is represented by a unique ray in $T$ starting at~$r$, and hence that adding all the ends of $G$ to $T$ does not create any circles. More generally, it is not hard to prove the following:

\begin{lemma}\label{clNST}
Let $T$ be a normal spanning tree of~$G$, and let $\Tbar := T\cup\Omega(G)$ denote its closure in~$|G|$. Then for every closed connected set $X\sub\Tbar$ and $x\in X$ there is a deformation retraction of $X$ onto~$x$.
\end{lemma}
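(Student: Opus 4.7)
My plan is to construct the deformation retraction by continuously sliding each point of $X$ along the unique arc joining it to~$x$ in the tree-like space~$\Tbar$.

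First I would verify that $\Tbar$ is uniquely arc-connected. Existence of an arc between any two points holds because $T$ is path-connected and each end is approached along its unique ray from the root~$r$ (by the normality property just recalled). Uniqueness follows from the fact, stated in the preceding paragraph, that $\Tbar$ contains no circles. Write $A_y$ for the unique arc from~$x$ to~$y$ in~$\Tbar$.

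Next, I would show $A_y\sub X$ for every $y\in X$. Let $p$ be an interior point of~$A_y$. Because normality forces each end of~$G$ to be approached from only one direction in~$\Tbar$, each end is a leaf of~$\Tbar$, so $p$ is in fact a vertex or an inner point of an edge of~$T$. In either case the absence of circles in~$\Tbar$ means $\Tbar\sm\{p\}$ splits into components that separate $x$ from~$y$; if $p$ were missing from~$X$ this would disconnect~$X$, contradicting connectedness. Hence $A_y\sub X$.

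To parametrize the $A_y$ coherently, I would assign edge-weights $\ell_e\assign 2^{-n(e)}$ with $n(e)$ the depth of~$e$ in~$T$, so that every ray from~$r$ has finite total weight. The induced length pseudometric extends to~$\Tbar$ and makes it a compact $\RR$-tree in which each $A_y$ is its own geodesic of finite length. Let $\sigma_y\colon[0,1]\to A_y$ be the arc-length parametrization with $\sigma_y(0)=x$ and $\sigma_y(1)=y$, and define $H\colon X\times[0,1]\to X$ by $H(y,t)\assign\sigma_y(1-t)$. The identities $H(\cdot,0)=\mathrm{id}_X$, $H(\cdot,1)\equiv x$ and $H(x,\cdot)\equiv x$ are then immediate (the arc $A_x$ being trivial).

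The main obstacle is continuity, and the only delicate case is at pairs $(y,t)$ with $y$ an end or close to one; elsewhere $\Tbar$ is locally homeomorphic to a finite 1-complex and continuity is routine. The key technical point is that a basic neighbourhood $\hat C(S,\omega)\cap\Tbar$ of an end $\omega$ lies past a tail of the ray representing~$\omega$, and hence has arbitrarily small diameter in the chosen metric (since the tail weights form a geometric tail of a convergent series). Consequently, whenever $y,y'$ are close to~$\omega$ the arcs $A_y,A_{y'}$ share a long initial segment from~$x$ up to a vertex of the representing ray and differ only inside a tail of small length. This keeps $H(y,t)$ and $H(y',t)$ close, uniformly in~$t$, giving continuity of~$H$.
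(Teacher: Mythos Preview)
Your approach is essentially the paper's: equip $\Tbar$ with the metric assigning length $2^{-n}$ to edges at depth~$n$, and slide each $y\in X$ back along its unique arc to~$x$, parametrized proportionally by arc length. The paper establishes continuity via an explicit Lipschitz bound $d(F(y,t),F(y',t))\le d(y,y')$ (and obtains $A_y\sub X$ by citing that closed connected subsets of~$|G|$ are arc-connected rather than by your separation argument), but these are only minor variations on your sketch.
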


\begin{proof}
   Let $X$ be a closed connected subset of~$\Tbar$, and let $x\in X$. Then $X$ is also closed in $|G|$ and hence arc-connected~\cite[Theorem~2.6]{TST}. For every $y\in X$ there is a unique $x$--$y$~arc $x\Tbar y$ in $\Tbar$~\cite[Theorem~8.5.7]{DiestelBook05} which hence lies in~$X$. The space $\Tbar$ is metrizable so that every edge between levels $n$ and $n+1$ has length $1/2^{n+1}$ and hence every end has distance~$1$ from the root~\cite{diestelESST}. $X$~inherits this metric $d$, note that $d(x,y)\le 2$ for all $y\in X$. Further, if $z\in y\Tbar y'$ for some $y,y'\in\Tbar$ we have $d(y,y')=d(y,z)+d(z,y')$. We construct a homotopy $F$ in $\Tbar$ from the identity on $\Tbar$ to the map $\Tbar \to \{x\}$; then we have $F(y,t) \in x\Tbar y \sub X$ for every $y\in X$ and $t\in[0,1]$, and hence $F \restr (X\times[0,1])$ will be the desired homotopy for $X$. For every $y\in\Tbar$ and $t\in[0,1]$ let $F(y,t)$ be the unique point on $x\Tbar y$ at distance $(1-t)d(x,y)$ from $x$.

   For the proof that $F$ is continuous, we show that $d(F(y,t),F(y',t)) \le d(y,y')$ for every $y,y'\in\Tbar$ and $t\in[0,1]$; then for every $\eps>0$ and every $y,y'\in\Tbar$ with $d(y,y')<\eps/3$ and $t,t'\in[0,1]$ with $|t-t'|<\eps/3$ we have
   \begin{align*}
     d(F(y,t),F(y',t')) &\le d(F(y,t),F(y',t)) + d(F(y',t),F(y',t'))\\
     &\le d(y,y') + |t-t'|\cdot d(x,y')\\
     &< \eps/3 + (\eps/3) \cdot 2 = \eps.
   \end{align*}
   As $x\Tbar y$ and $x\Tbar y'$ are closed, there is a last $z$ point on $x\Tbar y$ that is also in $x\Tbar y'$. As $\Tbar$ contains a unique arc between any two points in $\Tbar$, we have $y\Tbar y'=y\Tbar z \cup z\Tbar y'$ and hence $d(y,y')=d(y,z)+d(z,y')$. If $F(y,t)\in z\Tbar y$ and $F(y',t)\in z\Tbar y'$, then $d(F(y,t),F(y',t)) \le d(F(y,t),z) + d(z,F(y',t)) \le d(y,z)+d(z,y') = d(y,y')$. Otherwise both $F(y,t)$ and $F(y',t)$ are contained in $x\Tbar y$ or in $x\Tbar y'$. In particular, one of $F(y,t),F(y',t)$ lies on the arc between the other and~$x$. Then $d(F(y,t),F(y',t)) = |d(x,F(y,t))-d(x,F(y',t))| = (1-t)\cdot |d(x,y)-d(x,y')| \le d(y,y')$.
\end{proof}

Lemma~\ref{clNST} implies that $\Tbar$ contains no topological circle. Equivalently: for any two points $x,y\in\Tbar$ there is a unique arc in $\Tbar$ between $x$ and~$y$. We denote this arc by~$x\Tbar y$. The uniqueness of $x\Tbar y$ implies that none of its inner points can be an end. (Every arc containing an end also contains a vertex, and any two vertices of $T$ can also be joined by an arc in $T$ itself.)

When $T$ is a normal spanning tree of~$G$, every end $\omega$ in $|G|$ has a neighbourhood basis consisting of open sets $\hat C = \hat C(S,\omega)$ such that $S$ is closed downwards, i.e.\ where $s'\le s\in S$ implies $s'\in S$. We call these sets $\hat C$ the \emph{basic open neighbourhoods} of the ends%
   \footnote{The \emph{basic open neighbourhoods} of a point $x\in G$ are the connected open neighbourhoods of $x$ containing no vertex other than possibly~$x$.}
of~$G$ (given~$T$). An important property of these sets is that for any two points $x,y\in \hat C$ we also have $x\Tbar y\sub\hat C$.

Now let $S'= S\cup N(S)$, the (finite) set of vertices in $S$ and their neighbours. We call the subset $\hat C(S',\omega)$ of $C(S,\omega)$ the \emph{inside of~$\hat C(S,\omega)$ around~$\omega$}. Note that the neighbours $v$ of vertices $u\in C(S',\omega)$, as well as the edges~$uv$, also lie in~$C(S,\omega)$.

More background on normal spanning trees, including an existence proof, can be found in~\cite[Ch.~8]{DiestelBook05},~\cite{DiestelLeaderBGC, DiestelLeaderNST}.

\medbreak

Let us now introduce the topological cycle space $\C$ of~$G$. This is usually defined over~$\FF_2$ (which suffices for its role in graph theory), but we wish to prove our main results more generally with integer coefficients. (The $\FF_2$ case will follow, but it should be clear right away that the non-injectivity of our homomorphism $H_1\to\C$ is not just a consequence of a wrong choice of coefficients.) We therefore need to speak about orientations of edges.

An edge $e=uv$ of $G$ has two {\em directions\/}, $(u,v)$ and~$(v,u)$. A~triple $(e,u,v)$ consisting of an edge together with one of its two directions is an {\em oriented edge\/}. The two oriented edges corresponding to $e$ are its two {\em orientations\/}, denoted by $\ve$ and~$\ev$. Thus, $\{\ve,\ev\} = \{(e,u,v), (e,v,u)\}$, but we cannot generally say which is which. However, from the definition of $G$ as a CW-complex we have a fixed homeomorphism $\theta_e\colon [0,1]\to e$. We call $(\theta_e(0),\theta_e(1))$ the {\em natural direction\/} of~$e$, and $(e,\theta_e(0),\theta_e(1))$ its {\em natural orientation\/}.

Given a set $E$ of edges in~$G$, we write $\vE$ for the set of their orientations, two for every edge in~$E$. Given a partition $(U,V)$ of the vertex set of~$G$, we write $\vE(U,V)$ for the set of all its oriented edges $(e,u,v)$ with $u\in U$ and $v\in V$, and call this set an {\em oriented cut\/} of~$G$.

Let $\sigma\colon [0,1]\to |G|$ be a path in~$|G|$. Given an edge $e = uv$ of~$G$, if $[s,t]$ is a subinterval of $[0,1]$ such that $\{\sigma(s),\sigma(t)\} = \{u,v\}$ and $\sigma((s,t)) = \interior e$, we say that $\sigma$ \emph{traverses~$e$} on~$[s,t]$. It does so \emph{in the direction of~$(\sigma(s), \sigma(t))$}, or \emph{traverses $\ve = (e,\sigma(s),\sigma(t))$}. We then call its restriction to $[s,t]$ a \emph{pass of $\sigma$ through~$e$}, or~$\ve$, \emph{from $\sigma(s)$ to~$\sigma(t)$}.

Using that $[0,1]$ is compact and $|G|$ is Hausdorff, one easily shows that a path in $|G|$ contains at most finitely many passes through any given edge:

\begin{lemma}\label{pass}
A path in $|G|$ traverses each edge only finitely often.
\end{lemma}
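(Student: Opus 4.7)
The plan is to argue by contradiction. Suppose that $\sigma\colon [0,1]\to |G|$ has infinitely many passes through some edge $e=uv$, on intervals $[s_n,t_n]$ with $n\in\N$. The first preliminary observation is that the open intervals $(s_n,t_n)$ are pairwise disjoint: if an endpoint $s'$ of one pass lay in the open interior $(s,t)$ of another, then $\sigma(s')$ would be forced to lie both in $\{u,v\}$ (since $s'$ is an endpoint of a pass) and in $\interior e$ (since $s'\in(s,t)$), which is impossible. The cases where two passes would share an endpoint while having different other endpoints reduce to the same contradiction.

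Since the $(s_n,t_n)$ are pairwise disjoint open subintervals of $[0,1]$, their lengths sum to at most $1$, and so $t_n - s_n \to 0$. By pigeonhole I can then pass to an infinite subsequence along which every pass has the same direction, say $\sigma(s_n)=u$ and $\sigma(t_n)=v$ for all~$n$. Using compactness of $[0,1]$, I extract a further subsequence on which $s_n\to s^*$ for some $s^*\in[0,1]$; combined with $t_n-s_n\to 0$ this also yields $t_n\to s^*$.

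The contradiction now comes from continuity of~$\sigma$ together with the Hausdorff property of~$|G|$: both $\sigma(s_n)$ and $\sigma(t_n)$ converge to $\sigma(s^*)$, yet $\sigma(s_n)=u$ and $\sigma(t_n)=v$ for every $n$, so uniqueness of sequential limits forces $u=\sigma(s^*)=v$, contradicting $u\ne v$. (The appeal to sequences is legitimate because $|G|$, being a locally finite CW-complex with its ends adjoined, is metrizable and hence first-countable.)

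I do not anticipate any real obstacle here; the proof is essentially a routine compactness argument, and the hint in the text confirms that compactness of $[0,1]$ together with the Hausdorff property of $|G|$ are the only tools required. The one point that merits care is the very first step — verifying that \emph{distinct} passes really do have disjoint open intervals, so that the measure bound $\sum_n(t_n-s_n)\le 1$ is available — but as noted above this is immediate from $\interior e \cap \{u,v\}=\emptyset$.
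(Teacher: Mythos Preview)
Your proof is correct and follows essentially the same approach as the paper's: both argue by contradiction, use compactness of $[0,1]$ to find a common accumulation point of the $s_n$ and the $t_n$, and derive a contradiction to continuity from the fact that $u$ and $v$ have disjoint neighbourhoods in the Hausdorff space~$|G|$. You are simply more explicit than the paper about why the accumulation points coincide (via disjointness of the open pass-intervals and $t_n-s_n\to 0$), whereas the paper leaves this implicit.
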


\begin{proof}
Let $\sigma$ be a path in $|G|$, and let $e=uv$ be an edge such that $\sigma$ contains infinitely many passes $\sigma\restr [s_n,t_n]$ through~$e$ ($n=1,2,\dots$). As $[0,1]$ is compact, the sequence $s_1,s_2,\dotsc$ has an accumulation point~$x$, which is also an accumulation point of $t_1,t_2,\dotsc$. But now $\sigma$ fails to be continuous at~$x$, because $\{\sigma(s_n), \sigma(t_n)\} = \{u,v\}$ for each~$n$ but each of $u$ and $v$ has a neighbourhood not containing the other.
\end{proof}

A loop that is injective on~$[0,1)$ is a {\em circle\/} in~$|G|$. (In most of our references, the term {\em circle\/} is used for the image of such a loop.) The set of all edges traversed by a circle is a {\em circuit\/}. It is easy to show that the image of a circle is uniquely determined by its circuit~$C$, being the closure of $\bigcup C$ in~$|G|$.

Let $\vEE = \vEE(G)$ denote the set of all integer-valued functions $\phi$ on the set $\vE$ of all oriented edges of $G$ that satisfy $\phi(\ev) = -\phi(\ve)$ for all $\ve\in \vE$. This is an abelian group under pointwise addition. A~family $(\phi_i\mid i\in I)$ of elements of $\vEE$ is {\em thin\/} if for every $\ve\in\vE$ we have $\phi_i(\ve)\ne 0$ for only finitely many~$i$. Then $\phi = \sum_{i\in I} \phi_i$ is a well-defined element of~$\vEE$: it maps each $\ve\in\vE$ to the (finite) sum of those $\phi_i(\ve)$ that are non-zero. We shall call a function $\phi\in\vEE$ obtained in this way a {\em thin sum\/} of those~$\phi_i$.

We can now define our oriented version of the topological cycle space of~$G$. When $\alpha$ is a circle in~$|G|$, we call the function $\phi_\alpha\colon \vE\to\Z$ defined by
   $$\phi_\alpha\colon \ve\mapsto \left\{\hskip-6pt\begin{array}{rl}
1 & \textrm{if $\alpha$ traverses~$\ve$}\\
-1 & \textrm{if $\alpha$ traverses~$\ev$}\\
0 & \textrm{otherwise.}
\end{array} \right.$$
   an {\em oriented circuit\/} in~$G$, and write $\vCC = \vCC(G)$ for the subgroup of $\vEE$ formed by all thin sums of oriented circuits.
   
We remark that $\vCC$ is closed also under infinite thin sums~\cite[Cor.~5.2]{CyclesI}, but this is neither obvious nor generally true for thin spans of subsets of~$\vEE$~\cite[Sec.~3]{Basis}. We remark further that composing the functions in $\vCC$ with the canonical homomorphism $\Z\to\Z_2$ yields the usual {\em topological cycle space\/} $\C(G)$ of~$G$ as studied in~\cite{LocFinTutte, Duality,Partition, LocFinMacLane, Degree,CyclesI, CyclesII,TST, AgelosFleisch, Geo, Arboricity}, the $\FF_2$ vector space of subsets of $E$ obtained as thin sums of (unoriented) circuits.
   
The topological cycle space $\C(G)$ can be characterized as the set of those subsets of $E$ that meet every {\em finite\/} cut of $G$ in an even number of edges~\cite[Thm.~7.1]{CyclesI},~\cite[Thm.~8.5.8]{DiestelBook05}. The characterization has an oriented analogue:

\begin{theorem}\label{orthogonal}
   An element $\phi$ of $\vEE$ lies in $\vCC$ if and only if $\sum_{\ve\in\vF} \phi(\ve) = 0$ for every finite oriented cut $\vF$ of~$G$.
   \end{theorem}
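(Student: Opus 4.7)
The forward implication is routine bookkeeping. Let $\phi=\sum_{i\in I}\phi_{\alpha_i}$ be a thin sum of oriented circuits and let $\vF$ be a finite oriented cut. Since $\vF$ is finite and the family is thin, only finitely many indices $i$ yield any nonzero value on $\vF$, so summations may be interchanged and it suffices to prove $\sum_{\ve\in\vF}\phi_\alpha(\ve)=0$ for a single oriented circle~$\alpha$. By Lemma~\ref{pass}, $\alpha$ makes only finitely many passes through the edges of the finite set~$F$, and these cut $[0,1]$ into finitely many closed intervals on each of which $\alpha$ stays in a single connected component of $|G|\setminus\bigcup_{e\in F}\interior e$. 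Since removing the open $F$-edges leaves no path between $U$ and~$V$, each such component lies entirely on one side of the partition $(U,V)$ defining~$\vF$. Because $\alpha$ is a loop, the cyclic sequence of sides has equally many $U\to V$ and $V\to U$ transitions; these are exactly the contributions $+1$ and~$-1$ to $\sum_{\ve\in\vF}\phi_\alpha(\ve)$.

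\smallskip

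For the converse, fix a normal spanning tree $T$ of~$G$; such a tree exists because $G$ is countable. The key observation---and the main technical obstacle---is that in a locally finite graph, every tree edge $f=vw\in T$ (with $v$ the parent of~$w$) has a \emph{finite} fundamental cut~$F_f$. Indeed, for any chord $uu'\in F_f$ with $u$ in the subtree $T_w$ and $u'\notin T_w$, normality of $T$ forces $u'<u$, and then comparing $u'$ with~$w$ (both below~$u$) rules out $w\le u'$, so $u'$ is a strict ancestor of~$w$. There are only finitely many such ancestors (the finite tree-path $rTv$), and by local finiteness each of them has finitely many neighbours in~$G$. For each chord $e$, let $\alpha_e$ be a circle parameterising the (finite) fundamental circuit of $e$ so as to traverse $e$ in its natural direction, and set $a_e:=\phi(\ve)$ for the natural orientation~$\ve$.

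\smallskip

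Now set $\Phi:=\sum_{e\text{ chord}} a_e\,\phi_{\alpha_e}$, interpreted as a thin sum of oriented circuits by replacing $a_e\,\phi_{\alpha_e}$ with $|a_e|$ copies of $\phi_{\alpha_e}$ or $\phi_{\alpha_e^-}=-\phi_{\alpha_e}$ according to the sign of~$a_e$. This family is thin: for every chord $e'$ only $\alpha_{e'}$ contributes, and for every tree edge~$f$ the only contributing chords are the (finitely many) chords in the finite cut~$F_f$. By the forward direction already established, $\Phi$ satisfies the cut condition, and therefore so does $\psi:=\phi-\Phi$. By construction $\psi$ vanishes on every chord orientation. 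Applying the cut condition to $\vF_f$ for an arbitrary tree edge~$f$, and using that all other edges of $F_f$ are chords on which $\psi$ vanishes, forces $\psi(\vf)=0$. Hence $\psi\equiv 0$ and $\phi=\Phi\in\vCC$, as required.
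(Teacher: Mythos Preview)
Your proof is correct, and it is precisely the adaptation of the unoriented argument from~\cite[Thm.~8.5.8]{DiestelBook05} that the paper says suffices; the paper itself does not spell out a proof of Theorem~\ref{orthogonal}. One small remark on the forward direction: the assertion that each connected piece of $\alpha$ between $F$-passes lies entirely on one side of the bipartition deserves a word about ends, since such a segment may pass through~$\Omega(G)$. An end could a priori lie in the closure of both $U$ and~$V$, but as the (finite) set $S$ of endvertices of $F$ separates $U$ from $V$ in~$G$, every end has a neighbourhood $\hat C(S,\omega)$ avoiding one side---so no end does. This is exactly the argument the paper gives later in the proof of Lemma~\ref{rangeC}, and with it your forward implication is complete.
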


The proof of Theorem~\ref{orthogonal} is not completely trivial. But it adapts readily from the unoriented proof given e.g.\ in~\cite{DiestelBook05}, which we leave to the reader to check if desired.

When $G$ is fixed, we write $C_n$ for the group of singular $n$-chains in~$|G|$ (with coefficients in $\Z$ unless otherwise mentioned), $Z_n = \ke\partial_n$ and $B_n = \im\partial_{n+1}$ for the corresponding groups of cycles and boundaries, and $H_n = Z_n/B_n$. We view all singular 1-simplices as maps from the real interval $[0,1]$ to~$|G|$. The homology class of a cycle~$z$ is denoted by~$[z]$.
   
A cycle that can be written as a sum of 1-simplices no two of which share their first point is an {\em elementary cycle\/}. Every 1-cycle is easily seen to be a sum of elementary 1-cycles, a decomposition which is not normally unique.

The following lemma enables us to subdivide or concatenate the simplices in a 1-cycle while keeping it in its homology class.

\begin{lemma}\label{subdivide}
   Let $\sigma$ be a singular 1-simplex in~$|G|$, and let $s\in (0,1)$. Write $\sigma'$ and $\sigma''$ for the 1-simplices obtained from the restrictions of $\sigma$ to $[0,s]$ and to $[s,1]$ by reparametrizing linearly. Then $\sigma'+\sigma''-\sigma\in B_1$.
   \noproof
   \end{lemma}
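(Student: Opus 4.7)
The plan is to exhibit a singular 2-simplex $\tau$ in $|G|$ whose boundary equals $\sigma'+\sigma''-\sigma$. The idea is the standard one used whenever one wants to show that a singular chain can be subdivided without leaving its homology class: build $\tau$ by precomposing $\sigma$ with a suitable surjection $\pi\colon\Delta^2\to[0,1]$ that sends the three vertices of $\Delta^2$ to $0$, $s$ and $1$, respectively, so that the three face restrictions of $\tau=\sigma\circ\pi$ reparametrize to $\sigma'$, $\sigma$ and $\sigma''$.

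Concretely, I write points of the standard $2$-simplex in barycentric coordinates as triples $(a,b,c)$ with $a+b+c=1$ (and vertices $v_0=(1,0,0)$, $v_1=(0,1,0)$, $v_2=(0,0,1)$), and define
\[
\pi(a,b,c) \assign sb + c,
\]
which is continuous (in fact affine) and takes values in $[0,1]$ since $0\le sb+c\le b+c\le 1$. One checks immediately that $\pi(v_0)=0$, $\pi(v_1)=s$, $\pi(v_2)=1$, and that on the three edges we get the linear parametrizations $t\mapsto st$ on $[v_0,v_1]$, $t\mapsto s+(1-s)t$ on $[v_1,v_2]$, and $t\mapsto t$ on $[v_0,v_2]$. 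Setting $\tau\assign\sigma\circ\pi$ therefore gives a singular 2-simplex in $|G|$ whose faces $d_2\tau,d_0\tau,d_1\tau$ are exactly $\sigma'$, $\sigma''$ and $\sigma$.

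Applying the boundary operator then yields
\[
\partial\tau \;=\; d_0\tau - d_1\tau + d_2\tau \;=\; \sigma'' - \sigma + \sigma',
\]
so $\sigma'+\sigma''-\sigma=\partial\tau\in B_1$, as required.

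There is no real obstacle here: the only thing one has to be careful about is matching conventions, namely that $\sigma'$ and $\sigma''$ are obtained from $\sigma\restr[0,s]$ and $\sigma\restr[s,1]$ by \emph{linear} reparametrization (so that they agree on the nose with the face restrictions of $\tau$, not merely up to homotopy), and that the sign convention for $\partial$ gives the combination $\sigma'+\sigma''-\sigma$ rather than some other combination. Both of these are routine verifications with the formula $\pi(a,b,c)=sb+c$.
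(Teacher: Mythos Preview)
Your proof is correct; the construction $\tau=\sigma\circ\pi$ with $\pi(a,b,c)=sb+c$ is exactly the standard device, and your face computations are right. The paper itself offers no proof of this lemma (it is stated with a bare $\square$), so there is nothing to compare against beyond noting that your argument is the expected one.
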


When $\sigma$ is a summand in a cycle $z\in Z_1$, we shall say that the equivalent cycle $z'$ obtained by replacing $\sigma$ with $\sigma'+\sigma''$ in the sum arises by {\em subdividing~$\sigma$} (at~$s$ or at~$\sigma(s)$). A~frequent application of Lemma~\ref{subdivide} is the following:

\begin{corollary}\label{loops}
Every non-zero element of $H_1(|G|)$ is represented by a sum of loops each based at a vertex.
  \end{corollary}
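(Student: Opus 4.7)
The plan is to replace a given cycle first by a sum of concatenated loops, and then to shift the base point of each loop to a vertex by conjugating it with a path. Given a non-zero class in $H_1(|G|)$, pick a representative $z\in Z_1$ and, using the remark preceding the corollary, decompose it as a sum of elementary 1-cycles. For an elementary cycle $e=\sigma_1+\cdots+\sigma_k$, the condition $\partial e=0$ together with the distinctness of the starting points $\sigma_i(0)$ forces the map $i\mapsto\pi(i)$ defined by $\sigma_i(1)=\sigma_{\pi(i)}(0)$ to be a permutation. Grouping the summands by the cycles of $\pi$ splits $e$ further into elementary cycles on which $\pi$ acts as a single cycle, so after reindexing we may assume $\sigma_i(1)=\sigma_{i+1}(0)$ cyclically. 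Iterating Lemma~\ref{subdivide} in reverse, $e$ is then homologous to the single 1-simplex $\sigma_1\cdot\sigma_2\cdots\sigma_k$, which is a loop based at $x_0:=\sigma_1(0)$. It therefore suffices to show that any loop $\sigma$ in $|G|$ is homologous to a loop based at a vertex.

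Let $\sigma$ be a loop with base point $x_0$, and assume $x_0$ is not a vertex. I need an auxiliary path $\tau$ in $|G|$ from a vertex to~$x_0$. If $x_0$ lies in the interior of some edge $e=uv$, let $\tau$ be the segment from $u$ to $x_0$ inside $e$, using the fixed parametrization~$\theta_e$. If $x_0$ is an end $\omega$, pick any ray $R=v_0v_1v_2\dots$ in the class of~$\omega$, parametrize it naturally on $[0,1)$, and extend by $\tau(1):=\omega$; continuity at $t=1$ follows from the fact that every ray in $|G|$ converges to the end it belongs to.

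Now put $\tau^-(s):=\tau(1-s)$ and consider the 1-chain $c:=\tau+\sigma+\tau^-$. A direct computation gives $\partial c=0$, so $c\in Z_1$. Two applications of Lemma~\ref{subdivide} in reverse show that $c$ is homologous to the concatenated loop $\tau\cdot\sigma\cdot\tau^-$, which is a loop based at the vertex $\tau(0)$. Moreover $c-\sigma=\tau+\tau^-$ is a boundary: the concatenated loop $\tau\cdot\tau^-$ is null-homotopic via the obvious linear contraction along $\tau$, hence null-homologous, and Lemma~\ref{subdivide} then yields $\tau+\tau^-\in B_1$. Thus $[\sigma]=[\tau\cdot\sigma\cdot\tau^-]$, and performing this replacement on each summand of the decomposition yields the required representation. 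The only step that is not purely formal is the continuity of $\tau$ in the end case, but this is an immediate consequence of the definition of the Freudenthal topology and not a real obstacle.
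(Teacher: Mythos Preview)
Your proof is correct, but the paper's argument takes a shorter route at the rebasing step. Instead of conjugating by an auxiliary path~$\tau$, the paper observes that either the loop $\alpha$ already \emph{passes through} some vertex~$v=\alpha(s)$, in which case one simply subdivides $\alpha$ at~$s$ via Lemma~\ref{subdivide} and reconcatenates the two pieces in the opposite order to obtain a homologous loop based at~$v$; or else $\im\alpha$ misses every vertex, which forces $\im\alpha\subset\interior e$ for a single edge~$e$ (since non-trivial sets of ends are never connected), so $\alpha$ is null-homotopic and may be dropped. This avoids the case split on whether $x_0$ lies in an edge or is an end, and in particular never needs to build a path out to an end. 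Your conjugation argument, on the other hand, is the standard base-point-change manoeuvre and would work verbatim in any path-connected space; the price is the explicit construction of~$\tau$ and the verification of its continuity at an end.
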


\begin{proof}
Pick a cycle representing a given homology class, and decompose it into elementary cycles. Use Lemma~\ref{subdivide} to concatenate their simplices into a single loop. If such a loop~$\alpha$ passes through a vertex, we can subdivide it there and suppress its original boundary point, obtaining a homologous loop based at that vertex. If $\alpha$ does not pass through a vertex, then ${\im\alpha \sub\interior e}$ for some edge~$e$ (since non-trivial sets of ends are never connected), so $\alpha$ is null-homotopic and $[\alpha] = 0$.
   \end{proof}

\section{\boldmath $\vCC(G)$ and the \v{C}ech homology}\label{Cech}

In this section we briefly describe the relationship between the topological cycle space of a graph with ends and its \v{C}ech homology. We shall see that their groups are canonically isomorphic, but also that this isomorphism is not enough to capture the relevance of $\vC(G)$ to the structure of~$G$---the reason why graph theorists study cycle spaces in the first place. The material from this section will not be needed in the rest of the paper.

The \v{C}ech homology of a space is an alternative to singular homology for spaces that do not have a simplicial homology, and we begin by recalling its definition. Consider a space $X$ and an open cover $\U$ of $X$. Then $\U$ defines a simplicial complex $X_{\U}$, the \emph{nerve} of $\U$: The $0$-simplices of $X_{\U}$ are the elements of $\U$, and any $n+1$ elements of $\U$ form an $n$-simplex if and only if they have a nonempty intersection. For two open covers $\U,\U'$ let $\U\le\U'$ if $\U'$ is a refinement of $\U$. In this case, it is easy to define a continuous map from $X_{\U'}$ to $X_{\U}$: For each $0$-simplex $U$ of $X_{\U'}$ (i.e.\ $U\in\U'$) there is a $0$-simplex $\pi(U)$ of $X_{\U}$ (an element of $\U$) that contains it. Map each $U$ to $\pi(U)$ and extend this map linearly to the higher-dimensional simplices in $X_{\U'}$ so as to obtain a map $\rho:X_{\U'}\to X_{\U}$. As $U\in\U'$ can be contained in more than one element of $\U$, the choice of $\pi:\U'\to\U$ is not unique and neither is $\rho$. But it is easy to see that all possible choices of $\pi$ induce homotopic maps $\rho$ and hence the induce a unique homomorphism $\rho_{\U'}^{\U}:H_n(X_{\U'})\to H_n(X_{\U})$ on homology. Now the homology groups $H_n(X_{\U})$ for all open covers $\U$ together with the homomorphisms $\rho_{\U'}^{\U}$ form an inverse family. Define the \emph{$n$th \v{C}ech homology group} $\check H_n(X)$ to be the inverse limit of the $H_n(X_{\U})$.

For locally finite graphs the first \v{C}ech homology group and the topological cycle space coincide:

\begin{theorem}\label{CechequalsC}
  For a locally finite graph $G$ we have a canonical isomorphism $\check H_1(|G|) \simeq \vCC(G)$.
\end{theorem}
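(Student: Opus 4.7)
The plan is to identify both sides with a common inverse limit of finite cycle spaces. For every finite $S\sub V(G)$, let $G_S$ denote the finite multigraph obtained from $G$ by contracting each component $C$ of $G-S$ to a single new vertex $x_C$, keeping every $S$--$C$ edge as a parallel edge $u x_C$. For $S\sub S'$ there is a canonical contraction $G_{S'}\to G_S$, and the induced maps $\vCC(G_{S'})\to \vCC(G_S)$ of the (finite) cycle spaces make these into an inverse system. I will show that both $\vCC(G)$ and $\check H_1(|G|)$ are canonically isomorphic to $\varprojlim_S \vCC(G_S)$.

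For $\vCC(G)\cong \varprojlim_S \vCC(G_S)$, note that every edge of $G_S$ is canonically an edge of $G$ (either of $G[S]$ or an $S$--$C$ edge), so restriction defines a map $r_S\colon \vCC(G)\to \vEE(G_S)$. Its image lies in $\vCC(G_S)$ by Theorem~\ref{orthogonal}: every finite oriented cut of $G_S$ lifts, by expanding each $x_C$ back to $V(C)$, to a finite oriented cut of $G$ consisting of the same oriented edges. The maps $r_S$ are compatible with the contractions $G_{S'}\to G_S$, yielding a homomorphism $\vCC(G)\to\varprojlim_S \vCC(G_S)$. Its inverse sends a compatible family $(\phi_S)$ to the function $\phi\in\vEE(G)$ defined by $\phi(\ve):=\phi_S(\ve)$ for any $S$ containing both endpoints of $e$; compatibility makes $\phi$ well-defined, and $\phi\in\vCC(G)$ by Theorem~\ref{orthogonal}, because for any finite oriented cut $\vF$ of $G$ one can choose $S$ containing the endpoints of every edge in $\vF$, and then no component of $G-S$ can meet both sides of the bipartition defining $\vF$ (a connecting path in $G-S$ would contain an edge of $\vF$, contradicting $\vF\sub G[S]$), so the bipartition descends to $G_S$ and $\vF$ is a cut of $G_S$ on which $\phi_S$ vanishes.

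For $\check H_1(|G|)\cong \varprojlim_S \vCC(G_S)$, use that $|G|$ is compact, so finite open covers are cofinal. For each finite $S$ I construct a finite open cover $\U_S$ consisting of: a small open star $U_v$ around each $v\in S$ (containing $v$ and the initial third of each incident edge); a set $U_{x_C}$ for each component $C$ of $G-S$ (containing $C$, all ends living in $C$, and the final third of each $S$--$C$ edge); and the open middle third $U_e$ of each edge $e$ of $G_S$. Then $\U_S$ covers $|G|$; the vertex/component sets are pairwise disjoint; the edge sets are pairwise disjoint; and each $U_e$ meets exactly the two vertex-/component-type sets corresponding to the endpoints of $e$ in $G_S$. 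Hence the nerve $X_{\U_S}$ is the barycentric subdivision of $G_S$, so $H_1(X_{\U_S})\cong \vCC(G_S)$. The family $(\U_S)_S$ is cofinal among finite open covers: given a finite open cover $\W$, a Lebesgue-type argument together with the fact that each end has a neighborhood basis of basic open sets $\hat C(S,\omega)$ shows that some sufficiently large $S$ makes every element of $\U_S$ fit inside some element of $\W$. For $S\sub S'$ the inclusion refines $\U_S$, and the induced nerve map realizes, on $H_1$, exactly the contraction $\vCC(G_{S'})\to \vCC(G_S)$, so $\check H_1(|G|)=\varprojlim_S H_1(X_{\U_S})\cong\varprojlim_S\vCC(G_S)$. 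The main obstacle is this cofinality step: one needs $S$ large enough that any two ends lying in distinct elements of $\W$ are separated by $S$, and one must then pick the refinement function $\pi\colon \U_{S'}\to\U_S$ canonically (sending each $U_v^{\U_{S'}}$ with $v\in C$ to $U_{x_C}^{\U_S}$, each $U_{x_{C'}}^{\U_{S'}}$ to $U_{x_C}^{\U_S}$ with $C'\sub C$, and each $U_e^{\U_{S'}}$ to the image of $e$ under the contraction) so that the induced simplicial map matches the contraction $G_{S'}\to G_S$ up to homotopy.
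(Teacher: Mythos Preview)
Your approach is essentially the same as the paper's: compute $\check H_1(|G|)$ via a cofinal family of finite covers whose nerves are (homotopy equivalent to) the finite contraction minors~$G_S$, and identify $\vCC(G)$ with the inverse limit of the $\vCC(G_S)$. The paper parametrizes by $S=V(T_n)$ for a normal spanning tree~$T$ and computes both sides directly as $\prod_{\text{chords}}\Z$; you parametrize by arbitrary finite~$S$ and establish $\vCC(G)\cong\varprojlim_S\vCC(G_S)$ more explicitly via Theorem~\ref{orthogonal}, which is a nice touch.

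There is, however, a genuine gap in your cofinality step. Your covers $\U_S$ use \emph{fixed} thirds of edges: each $U_v$ is a star of radius~$1/3$, and each $U_e$ is the middle third. Such a family is not cofinal among all open covers of~$|G|$. If $\W$ covers some edge $e\sub G[S]$ by, say, ten overlapping intervals each of length~$1/5$, then none of $U_v$, $U_e$, $U_w$ (each of diameter at least~$1/3$) fits inside any member of~$\W$, so no $\U_{S'}$ refines~$\W$. The paper avoids this by introducing a second parameter~$\eps>0$: its covers $\U_{n,\eps}$ use $\eps$-stars around vertices of $T_n$ and cover each edge of $T_n$ by finitely many intervals of length~$\eps$. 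The nerve of $\U_{n,\eps}$ is then a subdivision of~$G_n$ rather than its barycentric subdivision, but it still retracts onto~$G_n$, so nothing is lost. Your argument is easily repaired the same way: add an $\eps$-parameter to the radii of the stars $U_v$ and subdivide each $U_e$ into short overlapping intervals. The resulting nerve is a finer subdivision of~$G_S$, still with $H_1\cong\vCC(G_S)$, and now the family $(\U_{S,\eps})$ really is cofinal.
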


\begin{proof}
  To compute the inverse limit of the groups $H_1(X_{\U})$ it suffices to to consider a family $\mathfrak{U}$ of open covers of $|G|$ that contains a refinement for every open cover of $|G|$, and to compute the inverse limit of the inverse family $(H_1(|G|_{\U}))_{\U\in\mathfrak{U}}$. We will now construct a suitable $\mathfrak{U}$.
  
  Let $T$ be a normal spanning tree of $G$ and denote the subtree induced by the first $n$ levels by $T_n$. Now for each $n$ and each $\eps>0$ let $\mathfrak{U}$ contain an open cover $\U_{n,\eps}$ consisting of the following sets: An open star of radius $\eps$ around each vertex $v\in V(T_n)$, finitely many open subintervals of length $\eps$ of each edge $e\in E(T_n)$,
   \COMMENT{Of course, the subintervals are supposed to cover the whole edge.}
  and the sets $\hat C(V(T_n),\omega)$ for each end of omega. Note that $\U_n$ is a finite family as $G-V(T_n)$ has only finitely many components.
  
  Using that $|G|$ is compact, it is not hard to see that for each open cover $\U$ of $|G|$ some $\U_{n,\eps}$ is a refinement of $\U$.
   \COMMENT{For every end $\omega$ of $G$ choose a nbhd $\hat C(S_{\omega},\omega)$ contained in some $U\in\U$. Since $|G|$ is compact, finitely many of these nbhds suffice to cover all of $\Omega(G)$. Let $S$ be the union of those finitely many $S_{\omega}$, then each set $\hat C(S,\omega)$ is contained in some $U\in\U$. Choose $n$ large enough so that $S\subset V(T_n)$. As $G[V(T_n)]$ is compact, there is an $\eps>0$ such that for every point in $G[V(T_n)]$ the $\eps$-nbhd around it is contained in some set in $\U$. Then $\U_{n,\eps}$ is a refinement of $\U$.
   
   NOTE: It is NOT enough to consider only the covers $\U_{n,\eps_n}$ for fixed $\eps_n$ with $\lim_{n\to\infty}\eps_n=0$. If an open cover consists of open $\frac12\eps_n$-balls around every vertex in the $n$th level of $T$, plus all open edges and some open sets covering the ends, then this cover will not have a refinement among the $\U_{n,\eps_n}$.}
  Clearly, every $|G|_{\U_{n,\eps}}$ retracts to the graph $G_n$ obtained from $G$ by contracting all components of $G-T_n$, and hence the homology group $H_1(|G|_{\U_{n,\eps}})=H_1(G_n)$ is a direct product of $\Z$'s, one for each chord of $T$ with at least one endvertex in $T_n$. Thus $\check H_1(|G|)$ also is the direct product of copies of $\Z$, one for each chord of $T$. As the same is true for $\vCC(G)$, we have that $\check H_1(|G|)$ and $\vCC(G)$ are canonically isomorphic.
\end{proof}

Although the first \v{C}ech homology group is isomorphic to the group of the topological cycle space, it does not sufficiently reflect the combinatorial properties of $\vCC(G)$. For example, a number of classical results about the cycle space say which circuits generate it---as do the non-separating chordless circuits in a $3$-connected graph, say. In the \v{C}ech homology, however, it is not possible to decide whether a homology class in $\check H_1(|G|)$ corresponds to a circuit in $G$. One might think that since a homology class $c\in\check H_1(|G|)$ corresponds to a family $(c_{n,\eps})$ of homology classes in the groups $H_1(|G|_{\U_{n,\eps}}) = H_1(G_n)$, the class $c$ should correspond to a circuit if every $c_{n,\eps}$ with sufficiently large $n$ corresponds to a circuit in $G_n$. But this is not the case: the limit of a sequence of cycle space elements in the $G_n$ can be a circuit even if the elements of the sequence are not circuits in the~$G_n$.
   \COMMENT{Example: (One might want to draw a picture whilst reading this comment...) Start with a "wide ladder" with three stiles $x_1^1,x_2^1,\dotsc$, $x_1^2,x_2^2,\dotsc$, and $x_1^3,x_2^3,\dotsc$, then attach infinitely many (oridinary) ladders by identifying the first rung of the $n$th ladder $L_n$ with the edge $x_{2n-1}^1x_{2n}^1$ to obtain the graph $G$. A normal spanning tree of $G$ is the following: Let $x_1^1$ be the root of $T$, go zigzag across the wide ladder (i.e.\ $x_1^1,x_1^2,x_1^3,x_2^3,x_2^2,x_2^1,x_3^1,\dotsc$) and zigzag across each $L_n$, starting from $x_{2n}^1$. Let $C$ be the circle consisting of the ray $x_1^1x_1^2x_1^3x_2^3x_3^3x_4^3\dotsm$, the two outer rays of each $L_n$, and the path $x_{2n}^1x_{2n}^2x_{2n+1}^2x_{2n+1}^1$ for each $n$. Then for every $m=3k-1$, $C$ does not induce a circuit on $G_m$, although it is the limits of its restrictions.}

In order to have a homology that reflects the properties of $\vCC(G)$, we thus need to take a singular approach.

\section{\boldmath Comparing $H_1(|G|)$ with $\vCC(G)$}\label{comp}

Let $G = (V,E)$ be a connected locally finite graph. Our aim in this section is to compare the first singular homology group $H_1$ of~$|G|$ (with integer coefficients) with the oriented topological cycle space $\vCC(G)$ of~$G$, the group of thin sums of oriented circuits in~$G$. When $G$ is finite then $|G| = G$, and all circuits and their thin sums are finite. Hence in this case $\vCC$ is just the first simplicial homology group of~$G$, so the two groups are indeed the same.

When $G$ is infinite, however, both circuits and thin sums can be infinite too. So they are not just the simplicial 1-cycles in~$G$. But there is an obvious singular 1-cycle in $|G|$ associated with an oriented circuit~$\phi_\alpha$: the circle~$\alpha$, viewed as a singleton 1-chain. Our aim is to extend this correspondence to one between $H_1$ and~$\vCC$.

Our approach will be to define a homomorphism $f\colon H_1\to\vEE(G)$ that counts for a given homology class $h$ how often the 1-simplices of a cycle representing~$h$, when properly concatenated, traverse a given edge~$\ve$; we then let $f(h)\in\vEE(G)$ map $\ve$ to this number.\footnote{The precise definition of $f$ will be given shortly.} We shall prove that $f(h)$ always lies in~$\vCC$ and, perhaps surprisingly, that $f$ maps $H_1$ onto~$\vCC$. However, we find that $f$ is not normally injective. Our first main result characterizes the graphs for which it is:

\begin{theorem}\label{compthm}
The map $f\colon H_1(|G|)\to\vEE(G)$ is a group homomorphism onto~$\vCC(G)$, which has a non-trivial kernel if and only if $G$ contains infinitely many (finite) circuits.
\end{theorem}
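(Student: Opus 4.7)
I would begin by defining $f$ concretely. For a singular 1-simplex $\sigma$ in $|G|$ and an oriented edge $\ve$, Lemma~\ref{pass} yields a finite signed count $N(\sigma,\ve)$ of passes of $\sigma$ in direction $\ve$ minus passes in direction $\ev$. Extend $N$ linearly to 1-chains and put $f(z)(\ve) := N(z,\ve)$. The first genuine step is to check that $f$ descends to $H_1$, i.e.\ that $f(\partial\tau)=0$ for every singular 2-simplex $\tau$; this follows from an analysis of $\tau^{-1}(\interior e)$ as an open subset of $\Delta^2$, whose trace on $\partial\Delta^2$ pairs up so that the signed $\ve$-crossings on the three faces cancel. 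To see $f(H_1)\sub\vCC$, invoke Theorem~\ref{orthogonal}: by Corollary~\ref{loops} every class is represented by a sum of vertex-based loops, and any such loop crosses any finite oriented cut $\vF$ equally often in each direction (since it returns to its basepoint), making the signed sum over $\vF$ vanish.

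For surjectivity, given $\phi\in\vCC$ I would produce a single loop $\alpha$ in $|G|$ with $N(\alpha,\ve)=\phi(\ve)$ for every~$\ve$. Fix a normal spanning tree $T$ rooted at~$r$: the fundamental circuits of the chords of $T$ form a thin generating set of $\vCC$, and the coefficient of the circuit of chord $e_i$ in any decomposition of $\phi$ can be read off as $\phi(\ve_i)$. Build $\alpha$ as a sequence of excursions from $r$ along $\Tbar$ across the required chords in the required directions; when infinitely many chords occur, pack their excursions into shrinking subintervals of $[0,1]$ whose limit parameter is sent into an end, continuity being granted by compactness of $|G|$ and the tree structure of $\Tbar$ provided by Lemma~\ref{clNST} (and the splicing mechanism of Lemma~\ref{infhomotopies}).

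For the easy direction of the kernel statement, suppose $G$ has only finitely many (finite) circuits. Then $\Cfin(G)$ is finite-dimensional, so any normal spanning tree $T$ has only finitely many chords $e_1,\dotsc,e_k$. By Lemma~\ref{clNST} the closure $\Tbar$ is contractible, so the pair $(|G|,\Tbar)$ collapses to give $|G|\simeq\bigvee_{i=1}^{k}S^1$. Hence $H_1(|G|)\cong\Z^k\cong\vCC(G)$, and the surjective homomorphism $f$ between free abelian groups of the same finite rank must be an isomorphism; in particular $\ker f=0$.

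The hard direction is the main obstacle. Assume $G$ has infinitely many finite circuits; I must exhibit a non-zero element of $\ker f$. Choose a normal spanning tree $T$ with infinitely many chords $e_1,e_2,\dotsc$ whose endpoints converge to a common end~$\omega$. For each~$i$ let $\beta_i$ be the loop that traverses the fundamental circuit of $e_i$ forward and then backward, so that $f([\beta_i])=0$. Concatenate the $\beta_i$ into a single loop~$\gamma$ by packing them into shrinking subintervals of $[0,1]$ accumulating at one parameter at which $\gamma$ takes the value~$\omega$; by construction $f([\gamma])=0$. The delicate point is to show $[\gamma]\neq 0$ in $H_1(|G|)$: the pairwise cancellations of the $\beta_i$ lose their global force because they pile up at~$\omega$, so one cannot cancel them by a single 2-chain. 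To handle this I would use the combinatorial description of $\pi_1(|G|)$ in terms of infinite reduced words over the chords of $T$, sketched in the introduction and developed in~\cite{FundGp}: $\gamma$ corresponds to a non-trivial infinite reduced word, and this word invariant extends to 1-chains so as to vanish on boundaries of 2-chains but not on~$\gamma$. Building such an invariant on 1-chains and proving that it actually separates $\gamma$ from $\im\partial_2$ is the crux of the theorem and is where the technical weight of the argument lies.
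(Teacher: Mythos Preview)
Your overall architecture matches the paper's: the proof that $\im f=\vCC$ via Theorem~\ref{orthogonal} and a single realizing loop, and the easy direction of the kernel statement, are essentially as in the paper. (The paper defines $f$ via the wrapping maps $f_e\colon|G|\to S^1$, so that well-definedness on $H_1$ is automatic from functoriality rather than from an analysis of $\tau^{-1}(\interior e)$; but your route also works.)

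The hard direction, however, contains a genuine error: your loop $\gamma$ is null-homotopic, so $[\gamma]=0$ and it cannot witness $\ker f\ne 0$. Each $\beta_i$ has the form $\alpha_i\alpha_i^-$ and is null-homotopic within its own image; since these images eventually lie in any given neighbourhood of~$\omega$, Lemma~\ref{infhomotopies} combines the individual null-homotopies into a homotopy from $\gamma$ to a loop lying entirely in~$\Tbar$, which is null-homotopic by Lemma~\ref{clNST}. In the language of words, the trace of your $\gamma$ is $\ve_{i_1}\ev_{i_1}\ve_{i_2}\ev_{i_2}\dotsm$, and this reduces to the empty word (pair position $2k-1$ with position~$2k$), contrary to your claim that it corresponds to a non-trivial reduced word.

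The paper's loop $\rho$ differs in exactly the way needed: its trace is $\ve_{i_0}\ve_{i_1}\dotsm\ev_{i_0}\ev_{i_1}\dotsm$, with all forward passes first and then all backward passes \emph{in the same order}, not reversed. This word is reduced, since for any two indices the restriction $\ve_{i_j}\ve_{i_k}\ev_{i_j}\ev_{i_k}$ admits no cancellation and hence every position is permanent (Lemma~\ref{permanent}). To upgrade this from a $\pi_1$-invariant to a homology invariant the paper does not simply ``extend the word invariant to chains'': it introduces, for each reduced word~$w$ and each~$k$, an integer $n(w,k)$ counting intervals of order type~$\omega$ of the specific shape $\ve_{i_k}\ve_{i_{k+1}}\dotsm$, sets $N(\phi)=\min_k\lvert\sum\lambda_n\,n(r(w_{\sigma_n}),k)\rvert$, and proves that $N$ vanishes on every $\partial\tau$ (using how reduction interacts with concatenation) while $N(\rho)=1$. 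Your final paragraph gestures at such an invariant without supplying one, and with your choice of $\gamma$ no such invariant could help, since $r(w_\gamma)=\es$.
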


Thus, $\vCC$~turns out to be a canonical---but usually non-trivial---quotient of~$H_1$. Taking this result mod~2 answers our original question: the topological cycle space $\C$ of~$G$ is a canonical---but usually non-trivial---quotient of the singular homology group of~$|G|$ with $\FF_2$ coefficients.

We remark that the last condition in Theorem~\ref{compthm} can be rephrased in various natural ways: that $G$ has a spanning tree with infinitely many chords; that every spanning tree of $G$ has infinitely many chords; or that $G$ contains infinitely many \emph{disjoint} (finite) circuits~\cite[Ex.~37, Ch.~8]{DiestelBook05}. The remainder of this section and the next two sections will be devoted to the proof of Theorem~\ref{compthm}.

\medskip

Let us define $f$ formally. Let $S^1$ denote the unit circle in the complex plane. The elements of $H_1(S^1)$ are represented by the loops $\eta_k\colon [0,1]\to S^1$, $s\mapsto e^{2\pi iks}$, $k\in\Z$. Write $\pi\colon H_1(S^1)\to\Z$ for the group isomorphism $[\eta_k]\mapsto k$. For every edge $e$ of~$G$, let $f_e\colon |G|\to S^1$ wrap $e$ round $S^1$ in its natural direction, defining $f_e \restr\interior e$ as $\eta_1\circ\theta_e^{-1}$ and putting $f_e (|G|\sm \interior e) := 1\in\CC$. Note that $f_e$ is continuous. 

The following lemma is easy to prove using homotopies in~$S^1$, combined by Lemma~\ref{infhomotopies}:

\begin{lemma}\label{traverse}
   Let $\alpha\colon [0,1]\to |G|$ be a loop based at a vertex. If $\alpha$ traverses $e$ exactly $k$ times in its natural direction and exactly $\ell$ times in the opposite direction, then $\pi([f_e\circ\alpha]) = k-\ell$.
   \end{lemma}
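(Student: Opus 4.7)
The plan is to construct an explicit homotopy in $S^1$ from $f_e\circ\alpha$ to a loop whose homology class can be read off by inspection.  Write $D\assign \alpha^{-1}(\interior e)$; since $\interior e$ is open in~$|G|$, this $D$ is an open subset of $[0,1]$ and decomposes into a countable disjoint union of open intervals $(a_n,b_n)$.  Outside~$D$ we have $\alpha\notin\interior e$ and hence $f_e\circ\alpha\equiv 1$; in particular this holds at $0$ and~$1$, as the basepoint of $\alpha$ is a vertex and so lies outside $\interior e$.  By continuity of $\alpha$, each endpoint value $\alpha(a_n),\alpha(b_n)$ must equal~$u$ or~$v$, where I set $u\assign\theta_e(0)$ and $v\assign\theta_e(1)$.

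I would then classify each interval $(a_n,b_n)$ by the pair $(\alpha(a_n),\alpha(b_n))$: a \emph{natural pass} if it is $(u,v)$, an \emph{opposite pass} if it is $(v,u)$, and an \emph{excursion} if $\alpha(a_n)=\alpha(b_n)$, so $\alpha$ dips into $\interior e$ from one endvertex and returns through the same endvertex without ever reaching the other.  By Lemma~\ref{pass} the number of pass intervals is finite, precisely $k$ of the first type and $\ell$ of the second; excursions, however, can be infinite in number and can accumulate toward the pass boundaries.

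Next I define a target loop $\beta\colon[0,1]\to S^1$: let $\beta\equiv 1$ on $[0,1]\sm D$ and on every excursion interval; on each natural pass let $\beta$ be the linear reparametrization of $\eta_1$ to $[a_n,b_n]$; on each opposite pass let $\beta$ be the linear reparametrization of $\eta_{-1}$.  On every $[a_n,b_n]$ the map $\theta_e^{-1}\circ\alpha\restr[a_n,b_n]$ is a continuous path in the contractible interval $[0,1]$, so a straight-line homotopy rel endpoints connects it to the corresponding linear or constant target.  The image of this homotopy lies in the sub-interval $\theta_e^{-1}(\alpha([a_n,b_n]))$ of $[0,1]$, so composing with $\eta_1$ yields a local homotopy from $f_e\circ\alpha\restr[a_n,b_n]$ to $\beta\restr[a_n,b_n]$ whose image lies in $(f_e\circ\alpha)([a_n,b_n])\cup\beta([a_n,b_n])$.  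Lemma~\ref{infhomotopies} then combines these local homotopies into a single homotopy $f_e\circ\alpha\simeq\beta$ in~$S^1$.

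It remains to read off $\pi([\beta])$.  Because $\beta$ is constantly~$1$ outside the finitely many pass intervals, it is (rel $\{0,1\}$) the concatenation of $k$ copies of $\eta_1$ and $\ell$ copies of $\eta_{-1}$, separated by constant segments.  Hence $[\beta]=(k-\ell)[\eta_1]$ in $H_1(S^1)$, and therefore $\pi([f_e\circ\alpha])=\pi([\beta])=k-\ell$.  The main obstacle is the bookkeeping around excursions: one must verify that on each excursion the straight-line homotopy really stays inside $\theta_e^{-1}(\alpha([a_n,b_n]))$---which works because $\theta_e^{-1}\circ\alpha$ there avoids at least one of $0,1$ and the contraction is to the endpoint it does attain---so that the image hypothesis of Lemma~\ref{infhomotopies} is met in spite of potentially infinitely many and possibly accumulating excursions.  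Once this is in place, the remainder is routine.
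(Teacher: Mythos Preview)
Your proof is correct and follows essentially the same approach as the paper's: decompose $\alpha^{-1}(\interior e)$ into components, homotope each pass to a standard $\eta_{\pm 1}$ and each excursion to a constant, and assemble the local homotopies via Lemma~\ref{infhomotopies}. The only organisational difference is that the paper first groups the excursions by the finitely many pass-free segments of $[0,1]$ and combines within each segment, whereas you treat all components of $D$ in one application of Lemma~\ref{infhomotopies}; your version is slightly tidier and is also more explicit about verifying the image hypothesis of that lemma, which the paper leaves implicit.
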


\begin{proof}
   Composing a pass of $\alpha$ through~$e$ (in its natural direction) with $f_e$ yields a map from a subinterval of $[0,1]$ to $S^1$ which, after reparametrization, is homotopic to~$\eta_1$.
   
The domains of distinct passes of $\alpha$ through $e$ are closed subintervals of $[0,1]$ meeting at most in their boundary points. The rest of $[0,1]$ is a finite disjoint union of open intervals $(s,t)$ (or $(s,1]$ or $[0,t)$). Each of these is in turn a disjoint union, possibly infinite, of open intervals $(s',t')$ which $\alpha$ maps to~$\interior e$ and closed intervals which $f_e\circ\alpha$ maps to $1\in\CC$. Since $[s,t]$, by definition, contains no pass through~$e$, $\alpha$~always maps $s'$ and $t'$ to the same endvertex of~$e$. Then $\alpha\restr [s',t']$ is homotopic to the constant map to that vertex, and $(f_e\circ\alpha)\restr [s',t']$ is homotopic to the constant map to~1. These homotopies combine to a homotopy of $(f_e\circ\alpha)\restr [s,t]$ to the constant map with value~1.

We deduce that $f_e\circ\alpha$ is homotopic to a concatenation $\sigma_1\cdot\ldots\cdot\sigma_n$ of loops in $S^1$ of which (after reparametrization) $k$ are equal to~$\eta_1$ and $\ell$ are equal to the inverse loop~$\overline{\eta_1}\colon \lambda\mapsto 1-\eta_1(\lambda)$, and the rest are constant with value~1. The result follows.
\end{proof}

Given $h\in H_1 (|G|)$, we now let $f(h)\in \vEE(G)$ assign $(\pi\circ (f_e)_*) (h)\in\Z$ to the natural orientation $\ve$ of~$e$:
   $$f(h)\colon \ve\mapsto (\pi\circ (f_e)_*) (h)\in\Z\,.$$
   This completes the definition of $f\colon H_1(|G|)\to\vEE$, which is clearly a group homomorphism.

\goodbreak

\begin{lemma}\label{rangeC}
  $\im f\sub\vCC(G)$.
  \end{lemma}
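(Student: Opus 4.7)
My plan is to invoke Theorem~\ref{orthogonal}: given $h\in H_1(|G|)$, I need to verify that $\sum_{\ve\in\vF} f(h)(\ve)=0$ for every finite oriented cut $\vF=\vE(U,V)$ of~$G$. Since $f$ is a group homomorphism, Corollary~\ref{loops} reduces the task to the case $h=[\alpha]$ for a single loop $\alpha$ based at some vertex~$v_0$, which I may assume lies in~$U$.

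First I would translate the sum into a crossing count. Applying Lemma~\ref{traverse} to each $e\in E(U,V)$, and using the antisymmetry $f(h)(\ev)=-f(h)(\ve)$, the quantity $\sum_{\ve\in\vF}f([\alpha])(\ve)$ simplifies to the number of passes of $\alpha$ across $\vF$ in the $U\!\to\!V$ direction minus the number of passes in the $V\!\to\!U$ direction. By Lemma~\ref{pass} and the finiteness of $E(U,V)$, there are only finitely many passes in all, and I list their parameter intervals in order along $[0,1]$ as $[s_1,t_1],\dotsc,[s_n,t_n]$.

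The key topological step is to split $Y\assign|G|\sm\bigcup_{e\in E(U,V)}\interior e$ as a disjoint union $Y=Y_U\sqcup Y_V$, where $Y_U$ and $Y_V$ are the closures in $|G|$ of $G[U]$ and $G[V]$. That $Y_U\cup Y_V$ covers~$Y$ follows because every end of $|G|$ lives in some component of $G-S$ (for $S$ the finite endvertex set of $E(U,V)$), and such a component is contained entirely in $U$ or in $V$ since its edges cannot be cut edges. The delicate point, which I expect to be the main obstacle, is disjointness at ends; I would handle it by taking $S'\assign S\cup N(S)$ and observing that the basic neighbourhood $\hat C(S',\omega)$ of every end~$\omega$ then contains no cut edge at all, since both endvertices of each cut edge lie in~$S'$. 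Hence $\hat C(S',\omega)\sub\overline{G[U]}$ or $\hat C(S',\omega)\sub\overline{G[V]}$ according to which side $\omega$ lives on, so $\omega$ belongs to only one of $Y_U,Y_V$.

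With the splitting in hand, $Y_U$ and $Y_V$ are clopen in~$Y$, so each of the connected images $\alpha([0,s_1])$, $\alpha([t_i,s_{i+1}])$ and $\alpha([t_n,1])$ lies entirely in one of them. Since $\alpha(0)=\alpha(1)=v_0\in Y_U$, the initial and final segments lie in~$Y_U$, so the first pass goes $U\!\to\!V$, the last goes $V\!\to\!U$, and consecutive passes alternate in direction. The numbers in the two directions therefore match, the net count is zero, and Theorem~\ref{orthogonal} yields $f(h)\in\vCC(G)$.
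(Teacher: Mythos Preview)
Your proof is correct and follows essentially the same approach as the paper's: reduce via Corollary~\ref{loops} to a single loop, translate the cut sum into a signed crossing count via Lemma~\ref{traverse}, and show that between consecutive passes the loop stays on one side because the closures of $U$ and $V$ in $|G|$ are disjoint (the paper establishes this with a sup argument along the path and needs only $S$ rather than $S\cup N(S)$, but the content is identical). Your clopen decomposition of $|G|\sm\bigcup\interior e$ is just a slightly more topological packaging of the same disjointness fact.
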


\begin{proof}
   By Theorem~\ref{orthogonal} it suffices to show that for every finite oriented cut $\vF$ of $G$ and every $h\in H_1 (|G|)$ we have $\sum_{\ve\in\vF} f(h)(\ve) = 0$. Let $\vF = \vE(U,U')$ and $h$ be given, let $F = \{e\mid\ve\in\vF\}$, and assume for simplicity that the orientations $\ve\in\vF$ of these edges are their natural orientations. Since $f$ is a homomorphism, we may assume that $h$ is represented by an elementary 1-cycle, which we may choose by Corollary~\ref{loops} to consist of a loop~$\alpha$ based at a vertex. We shall prove that $\alpha$ traverses the edges in $F$ as often from $U$ to $U'$ as it does from $U'$ to~$U$. Then
 $$\sum_{\ve\in\vF} f(h)(\ve) =
   \sum_{e\in F} (\pi\circ (f_e)_*) ([\alpha]) =
   \sum_{e\in F} \pi ([f_e\circ\alpha]) = 0$$
 by Lemma~\ref{traverse}.
   
Let $[s_1,t_1],\dots,[s_n,t_n]$ be the domains of the passes of $\alpha$ through edges of~$F$. In order to prove that as many of these passes are from a vertex in $U$ to one in $U'$ as vice versa, it suffices to show that each of the segments $\beta = \alpha\restr [t_i,s_{i+1}]$ has either all its vertices in $U$ or all its vertices in~$U'$, assuming for simplicity that $\alpha$ is based at $t_n =: t_0$. If the starting vertex $\beta(t_i)$ of $\beta$ lies in~$U$, say, put
 $$s := \sup\,\{\,r\in [t_i,s_{i+1}] : V\cap \beta ([t_i,r]) \sub U\,\}\,.$$
We wish to show that $s = s_{i+1}$. If not, then $\beta(s)$ is an end, and this end lies both in the closure of $U$ and in the closure of~$U'$. But these closures are disjoint: the set $S$ of vertices incident with an edge in~$F$ is finite, and since $S$ separates $U$ from~$U'$, the neighbourhood $\hat C(S,\omega)$ of any end $\omega$ avoids either $U$ or~$U'$.
   \end{proof}

Next, we prove that $f$ is surjective. At first glance, this may seem surprising: after all, we have to capture arbitrary thin sums of oriented circuits, which may well be disjoint, by finite 1-cycles.

\begin{lemma}\label{surjectivity}
 $\im f \supseteq \vCC(G)$.
   \end{lemma}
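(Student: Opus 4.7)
The plan is to construct, for each $\phi\in\vCC(G)$, a single loop $\alpha\colon[0,1]\to|G|$ based at a vertex~$r$ whose net traversal of each oriented edge $\ve$ (passes through $\ve$ minus passes through $\ev$) equals $\phi(\ve)$. Lemma~\ref{traverse} then yields $f([\alpha])=\phi$, completing the proof.

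Fix a normal spanning tree $T$ of~$G$ with root~$r$. Applying Theorem~\ref{orthogonal} to the finite cut $\vE(\{v\},V\sm\{v\})$ at every vertex~$v$ shows that $\phi$ is a conservative flow: at every vertex the signed total of $\phi$ over incident oriented edges vanishes. Thus $\phi$ determines a multigraph $H$ on $V(G)$, where $\ve$ appears with multiplicity $\max(\phi(\ve),0)$, and $H$ is Eulerian in the sense that in-degree equals out-degree everywhere. The task becomes realising a ``topological Euler tour'' of $H$ as a continuous loop in $|G|$.

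When $H$ involves only finitely many edges, a standard Eulerian argument gives a closed walk, and linear parametrisation yields~$\alpha$. In general I~would enumerate the chords~$e$ of~$T$ with $\phi(\ve)\ne 0$ in order of increasing tree-depth of their endpoints, and build $\alpha$ piecewise on time subintervals of $[0,1]$ of rapidly shrinking length, one per chord excursion, each traversing the corresponding fundamental circuit~$D_e$ with multiplicity $\phi(\ve)$. By Lemma~\ref{clNST} the metric on $\Tbar$ (with edges at tree-level~$n$ having length $2^{-n-1}$) forces deep excursions to stay inside small end-neighbourhoods, so the accumulation times of the shrinking subintervals admit a continuous extension by mapping them to ends of~$G$; Lemma~\ref{infhomotopies} then assembles everything into a single continuous loop.

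The main obstacle is the case where some tree edge~$f$ is crossed by infinitely many chords $e$ with $\phi(\ve)\ne 0$: the naive chord-excursion scheme would then traverse~$f$ infinitely often, contradicting Lemma~\ref{pass}. The remedy is to re-route the ``return journey'' of such deep excursions not back through~$f$ along~$T$, but through a common end of~$G$, merging infinitely many would-be tree traversals into a single topological passage out to the end and back. Verifying that the resulting loop has the correct net traversal count at every edge---and in particular only finitely many passes per edge---reduces edge by edge to Theorem~\ref{orthogonal} applied to the relevant finite cuts around~$f$: this is where the cut characterisation of~$\vCC$, rather than the mere vertex-conservation of a flow, is genuinely used.
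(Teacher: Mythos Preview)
Your proposal has a genuine gap at its hardest step, and the paper's argument proceeds quite differently.

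The paper does not decompose $\phi$ into fundamental circuits at all. It uses the \emph{definition} of $\vCC(G)$ directly: write $\phi=\sum_{\alpha\in A}\phi_\alpha$ as a thin sum of oriented circuits, where each $\alpha$ is already a circle in $|G|$ based at some vertex $v(\alpha)$. The paper then builds a single ``carrier'' loop $\sigma$ that traverses every edge of a spanning tree $T$ exactly once in each direction---constructed as a limit of depth-first traversals $\sigma_n$ of the finite subtrees~$T_n$, with pauses at every vertex---so that $f([\sigma])=0$. Finally, during one of $\sigma$'s pauses at each vertex~$v$, it splices in the concatenation of all circles $\alpha\in A$ with $v(\alpha)=v$. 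Thinness of the family $(\phi_\alpha)$ guarantees that only finitely many $\alpha$ are based at any given~$v$, so this is a finite insertion at each vertex. Continuity of the resulting loop~$\tau$ at points mapping to ends is checked directly; then $f([\tau])=\phi$ since $\sigma$ contributes zero.

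Your route via fundamental circuits hits precisely the obstacle you identify: the decomposition $\phi=\sum_e\phi(\ve)\cdot[D_e]$ over chords~$e$ is in general \emph{not} thin, so a single tree edge may lie in infinitely many summands. Your proposed remedy---``re-route the return journey through a common end''---is not a construction but a wish: you specify neither which end to use for which excursions, nor how the infinitely many rerouted segments are organised into a single continuous map $[0,1]\to|G|$, nor why the result traverses each edge only finitely often (as Lemma~\ref{pass} demands of any path). Lemma~\ref{infhomotopies} is of no help here: it combines homotopies between already-defined paths, it does not assemble a path from fragments. And invoking Theorem~\ref{orthogonal} to ``verify the traversal count'' presupposes that a loop has been built; the theorem characterises membership in~$\vCC$, it does not compute edge-multiplicities of an unspecified map. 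The paper's carrier-loop approach sidesteps all of this: $\sigma$~never traverses any tree edge more than twice, and the inserted circles~$\alpha$ are finite in number at each vertex and already continuous, so no rerouting is ever required.
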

   
\begin{proof}
   Let $\phi = \sum_{\alpha\in A} \phi_\alpha \in \vCC(G)$ be an arbitrary thin sum of oriented circuits, where each $\alpha$ is a circle in~$|G|$. Ignoring any $\phi_\alpha$ that are constant with value~0, we may assume that each $\alpha$ is based at a vertex~$v(\alpha)$. (Recall that if the image of~$\alpha$ contains no vertex it must lie inside an edge, because non-trivial sets of ends cannot be connected.) We shall construct a loop $\tau$ in $|G|$ such that $f([\tau]) = \phi$.

Let $T$ be a spanning tree of~$G$ and pick a root $r\in V$. Write $V_n$ for the set of vertices at distance~$n$ in~$T$ from~$r$, and let $T_n$ be the subtree of $T$ induced by ${V_0\cup\dots\cup V_n}$. Our first aim will be to construct a loop $\sigma$ in $|G|$ that traverses every edge of $T$ once in each direction and avoids all other edges of~$G$. We shall obtain $\sigma$ as a limit of similar loops $\sigma_n$ in $T_n\sub |G|$. We shall then incorporate our loops $\alpha\in A$ into~$\sigma$, to obtain~$\tau$. When we describe these maps informally, we shall think of $[0,1]$ as measuring time, and of a loop as a journey through~$|G|$.

Let $\sigma_0$ be the unique (constant) map $[0,1]\to T_0$. Assume inductively that $\sigma_n\colon [0,1]\to T_n$ is a loop traversing every edge of $T_n$ exactly once in each direction. Assume further that $\sigma_n$ pauses every time it visits a vertex, remaining stationary at that vertex for some time. More precisely, we assume for every vertex $v\in T_n - r$ that $\sigma_n^{-1}(v)$ is a disjoint union of as many non-trivial closed intervals as $v$ has incident edges in~$T_n$, and of one more such interval in the case of $v=r$. Let us call the restriction of $\sigma_n$ to such an interval a {\em pass\/} of $\sigma_n$ through~$v$. We are thus assuming that $\sigma_n$ is the union of its passes through the vertices and edges of~$T_n$.

Let $\sigma_{n+1}$ be obtained from $\sigma_n$ by replacing, for each leaf $v$ of~$T_n$, the unique pass of $\sigma_n$ through $v$ by a topological path that starts out remaining stationary at $v$ for some time, then visits all the neighbours of $v$ in $V_{n+1}$ in turn, pausing at each and shuttling back and forth between $v$ and those neighbours, and finally returns to $v$ to pause there. Outside the passes of $\sigma_n$ through leaves of~$T_n$, let $\sigma_{n+1}$ agree with~$\sigma_n$. Note that $\sigma_{n+1}$ satisfies our inductive assumptions for~$n+1$: it traverses every edge of $T_{n+1}$ exactly once each way, pauses every time it visits a vertex, and is the union of its passes through the vertices and edges of~$T_{n+1}$.

Let us now define~$\sigma$. Let $s\in [0,1]$ be given. If the values $\sigma_n (s)$ coincide for all large enough~$n$, let $\sigma(s) := \sigma_n (s)$ for these~$n$. If not, then $s_n := \sigma_n (s)\in V_n$ for every~$n$, and $s_0 s_1 s_2\dots$ is a ray in~$T$; let $\sigma$ map $s$ to the end of $G$ containing that ray.

Clearly every $\sigma_n$ is continuous, and $\sigma$ is continuous at points not mapped to ends. To show that $\sigma$ is continuous at every point $s$ mapped to an end $\omega = \sigma(s)$, let a neighbourhood $\hat C (S,\omega)$ of $\omega$ in $|G|$ be given. Put $s_n := \sigma_n (s)$. Choose $n$ large enough that the tree $T'$ spanned in $T$ by the vertices above~$s_n$---those vertices $v$ for which the $r$--$v$ path in $T$ contains~$s_n$---avoids the finite set~$S$. We claim that $\sigma$ maps the interval $I := \sigma_n^{-1} (s_n)$ to~$\hat C (S,\omega)$. Since $\sigma_{n+1}$ agrees with $\sigma_n$ on the boundary points of $I$ but not on~$s$, we know that $I$ is a neighbourhood of~$s$ in~$[0,1]$, so this will complete the proof that $\sigma$ is continuous. Let $t\in I$ be given. Induction on $m$ shows that $\sigma_m (I)\sub T'$ for every $m\ge n$. Hence if $\sigma(t)$ is not an end, then $\sigma(t) = \sigma_m (t) \in T'\sub \hat C (S,\omega)$ for some~$m$. But if $\sigma(t)$ is an end, then this is the end $\omega'$ of a ray that starts at $\sigma_n (t) = s_n$ and lies in~$T' \sub \hat C (S,\omega)$. Hence so does $\omega' = \sigma(t)$.
   
Let $\tau$ be obtained from $\sigma$ by replacing for every vertex $v$ one of the passes of $\sigma$ through $v$ with a concatenation of all the circles $\alpha$ with $\alpha\in A$ and $v(\alpha) = v$. Note that these are finitely many for each~$v$, because $G$ has only finitely many edges at~$v$ and $\sum_{\alpha\in A} \phi_\alpha$ is a thin sum.

Let us prove that $\tau$ is continuous. As before, this is clear at points $x\in [0,1]$ which $\sigma$ does not map to an end: for such $x$ the map $\tau$ agrees on suitable intervals $[s,x]$ and $[x,t]$ with $\sigma$ or some $\alpha\in A$, which we know to be continuous. The proof that $\tau$ is continuous at points $x$ which $\sigma$ maps to ends is similar to our earlier continuity proof for~$\sigma$. The only difference now is that we have to choose $n$ large enough also to ensure that none of the $\alpha$ with $v(\alpha)\in T'$ passes through a vertex of~$S$. Such a choice of $n$ is possible, because only finitly many edges are incident with vertices in~$S$ and the $\phi_\alpha$ form a thin family of functions. Then $\hat C (S,\omega)$ contains not only $T'$ but also the images of all $\alpha$ with $v(\alpha)\in T'$, because $\im\alpha$ is connected but does not meet the frontier~$S$ of $\hat C (S,\omega)$.

Finally, recall that $\sigma$~traverses every edge of $T$ once in each direction, and that it does not traverse any other edges. Therefore $f([\sigma]) = 0 \in \vCC(G)$, and hence $f([\tau]) = \sum_{\alpha\in A} \phi_\alpha = \phi$ as desired.
   \end{proof}

To complete the proof of Theorem~\ref{compthm} it remains to show that $f$ has a non-trivial kernel if and only if $G$ contains infinitely many circuits. The forward implication of this is easy. Indeed, suppose that $G$ contains only finitely many circuits, and let $T$ be a normal spanning tree of~$G$. Then $T$~has only finitely many chords, so $|G|$~is homotopy equivalent to a finite graph (Lemma~\ref{clNST}). Hence, as is well known, $H_1(|G|)$~equals the first simplicial homology group of $G$ viewed as a 1-complex, which in turn is clearly isomorphic to~$\vCC(G)$. Therefore $f$ must be injective.

The converse implication, surprisingly, is quite a bit harder. Assuming that $G$ contains infinitely many circuits, we shall define a loop $\rho$ in $|G|$ that traverses every edge equally often in both directions (so that $f([\rho])=0$), and which is easily seen not to be null-homotopic. To prove that $[\rho]\ne0$, however, i.e.\ that $\rho$ is not a boundary, will be harder: it turns out that we first have to understand the fundamental group of~$|G|$ a little better. With this knowledge we shall then be able to define an invariant of 1-chains that can distinguish $\rho$ from boundaries.

The next section, therefore, contains an interlude in which we describe the fundamental group $\pi_1(|G|)$ of an arbitrary locally finite connected graph~$G$ combinatorially. We shall then complete the proof of Theorem~\ref{compthm} in Section~\ref{windup}.

\section{\boldmath A combinatorial characterization of~$\pi_1(|G|)$}\label{pi1sec}

Our aim in this section is to prove some aspects of a combinatorial description of~$\pi_1(|G|)$ that we need for our proof of Theorem~\ref{compthm}. In~\cite{FundGp}, we give a more comprehensive such description; see Theorem~\ref{pi1thm} below.

When $G$ is finite, $\pi_1(|G|)$~is the free group $F$ on the set of \emph{chords} (arbitrarily oriented) of any fixed spanning tree, the edges of $G$ that are not edges of the tree. The standard description of $F$ is given in terms of reduced words of those oriented chords. The map assigning to a path in $|G|$ the sequence of chords it traverses defines the canonical group isomorphism between $\pi_1(|G|)$ and~$F$.

Our description of $\pi_1(|G|)$ for infinite~$G$ will be similar in spirit, but more complex. We shall start not with an arbitrary spanning tree but with a normal spanning tree. (The trees that work are precisely the {\em topological spanning trees\/} defined in \cite{TST} or~\cite[Ch. 8.5]{DiestelBook05}, which include the normal spanning trees.) Then every path in $|G|$ defines as its `trace' an infinite word in the oriented chords of that tree, as before. However, these words can have any countable order type, and it is no longer clear how to define reductions of words in a way that captures homotopy of paths.

Consider the following example. Let $G$ be the infinite ladder, with a spanning tree~$T$ consisting of one side of the ladder and all its rungs (drawn bold in Figure~\ref{fig:singleladder}). The path running along the bottom side of the ladder and back is a null-homotopic loop. Since it traces the chords $\ve_0, \ve_1,\dots$ all the way to~$\omega$ and then returns the same way, the infinite word $\ve_0\ve_1\dots\ev_1\ev_0$ should be reducible. But it contains no cancelling pair of letters, such as $\ve_i\ev_i$ or~$\ev_i\ve_i$.

\begin{figure}[htbp]
\centering
\noindent
\includegraphics{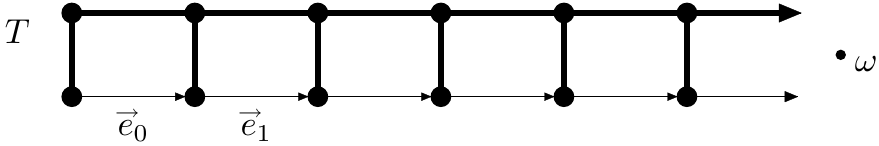}
\caption{The null-homotopic loop $\ve_0 \ve_1\dots\omega\dots\ev_1\ev_0$} 
\label{fig:singleladder}
\end{figure}%

\penalty-100

This simple example suggests that some transfinite equivalent of cancelling pairs of letters, such as cancelling inverse pairs of infinite sequences of letters, might lead to a suitable notion of reduction. However, one can construct graphs which, for any suitable spanning tree, contain null-homotopic loops whose trace of chords contains no such cancelling subsequences (of any order type).%
   \footnote{For example, consider for the binary tree~$T_2$ the loop $\sigma$ constructed in the proof of Lemma~\ref{surjectivity}, which traverses every edge of $T_2$ exactly once in each direction. This loop is null-homotopic in~$|T_2|$ (Lemma~\ref{clNST}), but no sequence of edges, of any order type, is followed immediately by the inverse of that sequence. The edges of $T_2$ aren't chords of a spanning tree, but this can be achieved by changing the graph: just double every edge and subdivide the new edges once. The new edges then form a normal spanning tree in the resulting graph~$G$, whose chords are the original edges of our~$T_2$, and $\sigma$ is still a (null-homotopic) loop in~$|G|$.}

We shall therefore define the reduction of infinite words differently, in a non-recursive way: just as a homotopy can shrink a loop simultaneously (rather than recursively) in many places at once, our reduction `steps' will be ordered linearly but not be well-ordered. This definition is less straightforward, but it has an important property: as for finite~$G$, our notion of reduction will be purely combinatorial and make no reference to the topology of~$|G|$.

The main step then will be to show that $\pi_1(|G|)$ embeds as a subgroup in the group of reduced words, and how. We shall see that, as in the finite case, the map assigning to a path in $|G|$ its trace of chords and reducing that trace is well defined on homotopy classes. In~\cite{FundGp} we shall prove that the map it induces on these classes is injective. Then $\pi_1(|G|)$~can be viewed as a subgroup of the group of reduced infinite words, which in turn can be viewed as a subgroup of the inverse limit of the free groups with generators the finite sets of oriented chords of any fixed normal spanning tree (Theorem~\ref{pi1thm}).

Let us make all this precise. Let $G$ be a locally finite connected graph, fixed throughout this section. Let $T$ be a fixed normal spanning tree of~$G$, and write $\Tbar$ for its closure $T\cup\Omega(G)$ in~$|G|$. Unless otherwise mentioned, the endpoints of all paths considered from now on will be vertices or ends, and any homotopies between paths will be relative to~$\{0,1\}$.

When we speak of `the passes' of a given path~$\sigma$, without referring to any particular edges, we shall mean the passes of $\sigma$ through chords of~$T$. If $T$ has only finitely many chords, then $|G|$ is homotopy equivalent to a finite graph, by Lemma~\ref{clNST}. Let us therefore assume that $T$ has infinitely many chords. Enumerate these as $e_0,e_1,\dots$, and denote their natural orientations as $\ve_0,\ve_1,\ldots$. The circles in $T+e_i$ are the \emph{fundamental circles} of~$e_i$. (Up to orientation and reparametrization, there is a only one such circle for every chord.)

Let us call the elements of the set
 $$A:= \{\ve_0, \ve_1, \dots\}\cup\{\ev_0, \ev_1,\dots\}$$
\emph{letters}, and two letters $\ve_i,\ev_i$ \emph{inverse} to each other. A~\emph{word} in~$A$ is a map $w\colon S\to A$ from a totally ordered countable set~$S$, the set of \emph{positions} of (the letters used by)~$w$, such that $w^{-1}(a)$ is finite for every $a\in A$. The only property of $S$ relevant to us is its order type, so two words $w\colon S\to A$ and $w'\colon S'\to A$ will be considered the same if there is an order-preserving bijection $\varphi\colon S\to S'$ such that $w=w'\circ \varphi$. If $S$ is finite, then $w$ is a \emph{finite} word; otherwise it is~\emph{infinite}. The \emph{concatenation} $w_1 w_2$ of two words is defined in the obvious way: we assume that their sets $S_1,S_2$ of positions are disjoint, put $S_1$ before~$S_2$ in~$S_1\cup S_2$, and let $w_1w_2$ be the combined map $w_1\cup w_2$. For $I\sub \N$ we let
 $$A_I\assign\{\ve_i \mid i\in I\}\cup \{\ev_i \mid i\in I\}\,,$$
and write $w\restr I$ as shorthand for the restriction $w\restr w^{-1}(A_I)$. Note that if $I$ is finite then so is the word~$w\restr I$, since $w^{-1}(a)$ is finite for every~$a$.

An \emph{interval} of~$S$ is a~sub\-set $S'\sub S$ closed under betweenness, i.e., such that whenever $s'<s<s''$ with $s',s''\in S'$ then also $s\in S'$. The most frequently used intervals are those of the form $[s',s'']_S\assign\{s\in S \mid {s' \le s \le s''}\}$  and $(s',s'')_S\assign\{s\in S \mid s'<s<s''\}$. If $(s',s'')_S=\es$, we call $s',s''$ \emph{adjacent} in~$S$.

A~\emph{reduction} of a finite or infinite word $w\colon S\to A$ is a totally ordered set $R$ of disjoint 2-element subsets of~$S$ such that the two elements of each $p\in R$ are adjacent in $S\sm\bigcup\{q\in R \mid q<p\}$ and are mapped by $w$ to inverse letters~$\ve_i,\ev_i$. We say that \emph{$w$ reduces to} the word $w \restr (S\sm\bigcup R)$. If $w$ has no nonempty reduction, we call it \emph{reduced}.

Informally, we think of the ordering on~$R$ as expressing time. A~reduction of a finite word thus recursively deletes cancelling pairs of (positions of) inverse letters; this agrees with the usual definition of reduction in free groups. When $w$ is infinite, cancellation no longer happens `recursively in time', because $R$ need not be well ordered.

As is well known, every finite word $w$ reduces to a unique reduced word, which we denote as~$r(w)$. Note that $r(w)$ is unique only as an abstract word, not as a restriction of~$w$: if $w = \ve_1\ev_1\ve_1$ then $r(w) = \ve_1$, but this letter $\ve_1$ may have either the first or the third position in~$w$. The set of reduced finite words forms a group, with multiplication defined as $(w_1,w_2)\mapsto r(w_1 w_2)$, and identity the empty word~$\es$. This is the free group with free generators $\ve_0,\ve_1,\dots$ and inverses $\ev_0,\ev_1\dots$. For finite $I\sub \N$, the subgroup
 $$F_I := \{w\mid \im w\sub A_I\}$$
 is the free group on $\{\ve_i \mid i\in I\}$.

Consider a word~$w$, finite or infinite, and $I\sub \N$. It is easy to check the following:
\begin{equation}\begin{minipage}[c]{0.72\textwidth}\label{inducedreduction}\em
If $R$ is a reduction of~$w$ then $\big\{\{s,s'\}\in R\mid w(s)\in A_I\big \}$, with the ordering induced from~$R$, is a reduction of~$w\restr I$.
  \end{minipage}\ignorespacesafterend\end{equation}

\noindent
   In particular:
\begin{equation}\begin{minipage}[c]{0.72\textwidth}\label{inducedreductioninformal}\em
Any result of first reducing and then restricting a word can also be obtained by first restricting and then reducing it.\looseness=-1
  \end{minipage}\ignorespacesafterend\end{equation}

Our aim is to define a reduction map $r$ also for infinite words, so that $(w_1,w_2)\mapsto r(w_1 w_2)$ makes the set of reduced (finite or infinite) words into a group~$F_\infty$.\footnote{The notation $F_{\infty}$ is chosen in analogy to the groups $F_I$ of finite words, but note that $F_{\infty}$ will \emph{not} be a free group.} This group $F_\infty$ will contain $\pi_1(|G|)$ as a subgroup by an embedding $\langle\alpha\rangle\mapsto r(w_\alpha)$, where $w_\alpha$ is the word of chords traced out by~$\alpha$.

To define such a reduction map~$r$, we need to show that every infinite word reduces to a unique reduced word. Existence is immediate:

\begin{lemma}\label{lemma:reduce}
  Every word reduces to some reduced word.
\end{lemma}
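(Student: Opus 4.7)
The plan is to apply Zorn's Lemma to the set $\mathcal{R}$ of all reductions of $w$, partially ordered by an ``extension'' relation: $R \leq R'$ means $R \subseteq R'$ as sets of pairs, the ordering of $R'$ restricted to $R$ coincides with the ordering of $R$, and every element of $R' \setminus R$ is larger in $R'$ than every element of $R$. Informally, $R'$ carries out all the cancellations of $R$ in the same relative order, followed by some additional ones.

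For a chain $(R_i)_{i \in I}$ in $\mathcal{R}$ I would take $R \assign \bigcup_i R_i$, ordered pointwise from the chain (which is well defined since, for any two pairs $p,q \in R$, some $R_i$ contains both, and the orderings on different $R_i$'s agree on overlaps). The only nontrivial thing to verify is the adjacency condition: for each $p \in R$, fix $i$ with $p \in R_i$. The crucial point is that the definition of the ``extension'' order forces any $q \in R \setminus R_i$ to satisfy $p < q$ in $R$. Hence $\{q \in R : q < p\} = \{q \in R_i : q < p\}$, so adjacency of the two elements of $p$ in $S \setminus \bigcup\{q \in R : q < p\}$ reduces to the known adjacency condition for $p$ inside $R_i$.

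Zorn's Lemma then delivers a maximal reduction $R^* \in \mathcal{R}$, and it remains to show that $w^* \assign w \restr (S \setminus \bigcup R^*)$ is reduced. Otherwise $w^*$ would admit a nonempty reduction $R'$; placing all pairs of $R'$ above all pairs of $R^*$ (while keeping the internal orders on $R^*$ and on $R'$) yields a set of pairs that strictly extends $R^*$ in~$\mathcal{R}$. To see it is a reduction of $w$: for $p \in R^*$ the adjacency condition is unchanged (no new pair sits below $p$), while for $p \in R'$ we have $\{q \in R^* \cup R' : q < p\} = R^* \cup \{q \in R' : q < p\}$, so adjacency in $S$ with respect to this set is exactly adjacency in $S \setminus \bigcup R^*$ with respect to $\{q \in R' : q < p\}$, which holds because $R'$ is a reduction of~$w^*$. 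This contradicts the maximality of $R^*$.

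The main subtlety is the bookkeeping around the ``extension'' ordering. It has to be strong enough that at a limit the predecessors of any fixed pair $p$ never change, so that the adjacency condition for~$p$ survives passage to the union; but also weak enough that a maximal element still admits the appending of any overlooked cancellation, allowing us to conclude that nothing reducible is left behind. Once the ordering is set up as above, both sides work out and the verifications themselves are routine.
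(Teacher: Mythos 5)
Your proposal is correct and takes essentially the same route as the paper: the paper's proof is exactly an application of Zorn's Lemma to the reductions of $w$, observing that a maximal reduction $R$ leaves a reduced word $w\restr(S\sm\bigcup R)$. You have merely filled in the details the paper leaves implicit, namely a suitable extension order under which chains of reductions have upper bounds and under which any leftover cancellation could be appended above a maximal reduction, contradicting maximality.
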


\begin{proof}
  Let $w\colon S\to A$ be any word. By Zorn's Lemma there is a maximal reduction $R$ of~$w$. Since $R$ is maximal, the word $w\restr (S\sm\bigcup R)$ is reduced.
\end{proof}

To prove uniqueness, we begin with a characterization of the reduced words.
Let $w\colon S\to A$ be any word. If $w$ is finite, call a position $s\in S$ \emph{permanent} in~$w$ if it is not deleted in any reduction, i.e., if $s\in S\sm\bigcup R$ for every reduction $R$ of~$w$. If $w$ is infinite, call a position $s\in S$ \emph{permanent} in $w$ if there exists a finite $I\sub\N$ such that $w(s)\in A_I$ and $s$ is permanent in $w\restr I$. By~\eqref{inducedreductioninformal}, a permanent position of $w\restr I$ is also permanent in $w\restr J$ for all finite $J\supseteq I$. The converse, however, need not hold: it may happen that $\{s,s'\}$ is a pair (`of cancelling positions') in a reduction of~$w\restr I$ but $w\restr J$ has a letter from $A_J\sm A_I$ whose position lies between $s$ and~$s'$, so that $s$ and $s'$ are permanent in~$w\restr J$.

\begin{lemma}\label{permanent}
  A word is reduced if and only if all its positions are permanent.
\end{lemma}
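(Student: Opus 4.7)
The plan is to prove the two implications separately; one is immediate from~\eqref{inducedreduction}, the other needs a compactness argument.

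For ``\emph{all positions permanent $\Rightarrow$ $w$ is reduced}'' I~argue contrapositively. If $w$ admits a non-empty reduction containing a pair $\{s,s'\}$, let $i$ be the index with $w(s)\in A_{\{i\}}$. Then for every finite $I\ni i$, \eqref{inducedreduction} yields a reduction of $w\restr I$ that still contains $\{s,s'\}$, and hence deletes~$s$. So $s$ is not permanent in any such $w\restr I$, and therefore not permanent in~$w$.

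For the converse, assuming that some $s$ is not permanent, I~will build a non-empty reduction of~$w$. Pick an exhausting chain $\{i\}=I_0\subseteq I_1\subseteq\dotsb$ with $\bigcup_n I_n=\N$. For each~$n$, non-permanence of~$s$ in $w\restr{I_n}$ gives a reduction of $w\restr{I_n}$ in which $s$ is paired with some $s_n'$; since pairs in a reduction cannot cross (if $q<\{s,s_n'\}$ had one position outside $(s,s_n')$ then $s$ would also have to lie in some $r<q$, contradicting the disjointness of pairs), the pairs ranked earlier than $\{s,s_n'\}$ lie entirely inside $(s,s_n')$ and cover every position in that interval, so they reduce $w|_{(s,s_n')}\restr{I_n}$ to~$\es$. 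Because $w^{-1}(w(s)^{-1})$ is finite, I~may pass to a subsequence along which $s_n'$ is constantly equal to some $s^*$. Writing $w_0:=w|_{(s,s^*)}$ (say $s<s^*$), I~therefore have: $w_0\restr{I_n}$ reduces to~$\es$ for every $n$ in the subsequence.

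The crux is to lift this to ``$w_0$ itself reduces to~$\es$''. I~would use that a reduction of a finite word to~$\es$ corresponds, via its set of cancelling pairs, to a non-crossing perfect matching of its positions whose pairs carry inverse letters; let $M_n$ be such a matching for $w_0\restr{I_n}$. Each position~$t$ of~$w_0$ has at most finitely many candidate partners, namely the positions in $w^{-1}(w(t)^{-1})$, so a diagonal/Tychonoff argument furnishes a subsequence along which, for every~$t$, the partner $\pi_n(t)$ of~$t$ in~$M_n$ stabilises to some $\pi(t)$. The limit $M:=\{\{t,\pi(t)\}\mid t\}$ is then a perfect non-crossing matching of the positions of~$w_0$ with inverse letters at each pair (involutivity, inverseness, and non-crossing all transfer from the~$M_n$). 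Finally, the partial order ``strictly contained in'' on~$M$ extends to a total order that turns $M$ into a reduction of~$w_0$ to~$\es$, since the interior positions of any pair~$p\in M$ are exhausted by pairs strictly inside~$p$, which come earlier. Prepending this reduction to $\{s,s^*\}$ yields a non-empty reduction of~$w$, contradicting that $w$ is reduced. The main obstacle is precisely this compactness step: producing a globally consistent limiting matching and then totally ordering it into a valid reduction despite possibly infinite nesting chains, which is why the definition allows arbitrary total orders on~$R$, not just well-orders.
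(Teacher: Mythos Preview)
Your argument is correct, but the harder direction follows a genuinely different route from the paper's. The paper applies K\"onig's infinity lemma \emph{once} to the tree of reductions of the words $w_n := w\restr\{0,\dots,n\}$ (with the branching given by the restriction map $R\mapsto R^-$ of~\eqref{inducedreduction}), obtaining a coherent sequence $R_N, R_{N+1},\dots$ and taking $R:=\bigcup_m\bigcap_{n\ge m} R_n$, the set of pairs that eventually stabilise. It then verifies directly that $R$ is a reduction of~$w$, via an essential/inessential dichotomy on positions. You instead work in two stages: first pigeonhole on the finitely many candidate partners of~$s$ to fix~$s^*$, then run a separate diagonal argument on the non-crossing matchings~$M_n$ of the interval $(s,s^*)$, and finally convert the limit matching into a reduction by linearly extending the nesting order. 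Your approach makes explicit the pleasant fact that reductions of a word to~$\es$ are precisely the non-crossing perfect matchings by inverse letters (totally ordered so that inner pairs precede outer ones), and the conversion step is where you genuinely use that $R$ is allowed to be an arbitrary total order rather than a well-order. The paper's single K\"onig step is more economical, but your version isolates the combinatorial structure more transparently. One small slip: not \emph{all} pairs ranked below $\{s,s_n'\}$ need lie inside $(s,s_n')$---some may lie entirely outside---but your argument only needs (and correctly proves) that those \emph{touching} $(s,s_n')$ lie inside it, which suffices for the restriction to be a reduction of $w_0\restr I_n$ to~$\es$.
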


\begin{proof}
  The assertion is clear for finite words, so let $w\colon S\to A$ be an infinite word. Suppose first that all positions of $w$ are permanent. Let $R$ be any reduction of~$w$; we will show that $R=\es$.
Let $s$ be any position of~$w$. As $s$ is permanent, there is a finite $I\sub\N$ such that $w(s)\in A_I$ and $s$ is not deleted in any reduction of~$w\restr I$. By~\eqref{inducedreduction}, the pairs in $R$ whose elements map to~$A_I$ form a reduction of~$w\restr I$, so $s$ does not lie in such a pair. As $s$ was arbitrary, this proves that $R=\es$.

Now suppose that $w$ has a non-permanent position~$s$. We shall construct a non-trivial reduction of~$w$. For all $n\in\N$ put $S_n := \{s\in S \mid w(s)\in A_{\{0,\dots,n\}}\}$; recall that these are finite sets. Write $w_n$ for the finite word $w\restr I$ with $I=\{0,\dots,n\}$, the restriction of $w$ to~$S_n$. For any reduction $R$ of~$w_{n+1}$, the set $R^- := \big\{\{t,t'\}\in R\mid t,t'\in S_n\big\}$ with the induced ordering is a reduction of~$w_n$, by~\eqref{inducedreduction}.

Pick $N\in\N$ large enough that $s\in S_N$. Since $s$ is not permanent in~$w$, every $w_n$ with $n\ge N$ has a reduction in which $s$ is deleted. As $w_n$ has only finitely many reductions, K\"onig's infinity lemma~\cite[Lem. 8.1.2]{DiestelBook05} gives us an infinite sequence $R_N, R_{N+1}, \dots$ in which each $R_n$ is a reduction of~$w_n$ deleting~$s$, and $R_n = R_{n+1}^-$ for every~$n$. Inductively, this implies:
  \begin{txteq}\label{pairs}
  For all $m\le n$, we have $R_m = \big\{\{t,t'\}\in R_n\mid t,t'\in S_m\big\}$, and the ordering of $R_m$ on this set agrees with that induced by~$R_n$.
   \end{txteq}
Let $s'\in S$ be such that $\{s,s'\}\in R_n$ for some~$n$; then $\{s,s'\}\in R_n$ for every $n\ge N$, by~\eqref{pairs}.

Our sequence $(R_n)$ divides the positions of~$w$ into two types. Call a position $t$ of $w$ \emph{essential} if there exists an $n\ge N$ such that $t\in S_n$ and $t$ remains undeleted in~$R_n$; otherwise call $t$ \emph{inessential}. Consider the set
 $$R\assign\bigcup_{m\ge N}\bigcap_{n\ge m}R_n$$
 of all pairs of positions of $w$ that are eventually in~$R_n$. Let $R$ be endowed with the ordering $p < q$ induced by all the orderings of $R_n$ with $n$ large enough that $p,q\in R_n$; these orderings are compatible by~\eqref{pairs}. Note that $R$ is non-empty, since it contains~$\{s,s'\}$. We shall prove that $R$ is a reduction of~$w$.

We have to show that the elements of each $p\in R$, say $p = \{t_1,t_2\}$ with $t_1 < t_2$, are adjacent in $S\sm\bigcup\{q\in R \mid q<p\}$. Suppose not, and pick $t\in (t_1,t_2)_S\sm\bigcup\{q\in R \mid q<p\}$. If $t$ is essential, then $t$ is a position of $w_n$ remaining undeleted in~$R_n$ for all large enough~$n$. But then $\{t_1,t_2\}\notin R_n$ for all these~$n$, contradicting the fact that $\{t_1,t_2\}\in R$. Hence $t$ is inessential. Then $t$ is deleted in every $R_n$ with $n$ large enough. By~\eqref{pairs}, the pair $\{t,t'\}\in R_n$ deleting~$t$ is the same for all these~$n$, so $\{t,t'\} =: p'\in R$. By the choice of~$t$, this implies $p'\not< p$. For $n$ large enough that $p,p'\in R_n$, this contradicts the fact that $t_1,t_2$ are adjacent in $S_n\sm\bigcup\{q\in R_n, q<p\}$, which they are since $R_n$ is a reduction of~$w_n$.
\end{proof}

Note that a word can consist entirely of non-permanent positions and still reduce to a non-empty word: the word $\ve_1\ev_1\ve_1$ is again an example.

Lemma~\ref{permanent} offers an easy way to check whether an infinite word is reduced. In general, it can be hard to prove that a given word $w$ has no non-trivial reduction, since this need not have a `first' cancellation. By Lemma~\ref{permanent} it suffices to check whether every position becomes permanent in some large enough but finite~$w\restr I$.

Similarly, it can be hard to prove that two words reduce to the same word. The following lemma provides an easier way to do this, in terms of only the finite restrictions of the two words:

\begin{lemma}\label{reduceonI}
  Two words $w,w'$ can be reduced to the same (abstract) word if and only if $r(w\restr I) = r(w'\restr I)$ for every finite $I\sub\N$.
\end{lemma}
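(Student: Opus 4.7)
My plan is to dispatch the forward direction directly from \eqref{inducedreductioninformal} and uniqueness of reductions for finite words, and to prove the backward direction by constructing a common reduct $u$ as a direct limit of the finite reduced words $u_n\assign r(w\restr I_n)$ and then obtaining a reduction of $w$ to $u$ via K\"onig's infinity lemma, in the style of the proof of Lemma~\ref{permanent}. The forward direction is quick: if both $w$ and $w'$ reduce to the same abstract word $u$, then by \eqref{inducedreductioninformal} each of $w\restr I$ and $w'\restr I$ reduces to $u\restr I$, so by uniqueness of reductions of finite words, $r(w\restr I)=r(u\restr I)=r(w'\restr I)$.

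For the backward direction, set $I_n\assign\{0,\dots,n\}$ and $u_n\assign r(w\restr I_n)=r(w'\restr I_n)$. By \eqref{inducedreductioninformal} we have $u_n\restr I_m=u_m$ whenever $m\le n$, so the $u_n$ form a coherent sequence of finite reduced words. Viewing each $u_n$ as a finite ordered position set $S_n$ with a labelling to~$A$, the compatibilities $u_n\restr I_m=u_m$ induce canonical order- and label-preserving inclusions $S_m\hookrightarrow S_n$ (canonical because each $u_n$ is reduced); I define the abstract word $u$ as their direct limit, with position set $S\assign\bigcup_n S_n$ carrying the induced total order and labelling. This really is a word: for each $a\in A_{I_k}$ the set $u^{-1}(a)$ agrees with the finite set $u_k^{-1}(a)$ and does not grow further.

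It then remains to show that $w$ (and symmetrically $w'$) reduces to~$u$. For each $n$ choose a reduction $R_n$ of $w\restr I_n$ witnessing $u_n$; by \eqref{inducedreduction}, restricting any reduction of $w\restr I_{n+1}$ witnessing $u_{n+1}$ to the pairs whose positions both map to $A_{I_n}$ yields a reduction of $w\restr I_n$ to $u_{n+1}\restr I_n=u_n$. Since each $w\restr I_n$ has only finitely many reductions, K\"onig's infinity lemma produces a compatible sequence with $R_m\sub R_n$ and matching orders for all $m\le n$. Setting $R\assign\bigcup_n R_n$ with its well-defined induced order, the claim is that $R$ is a reduction of $w$ whose remaining word is~$u$. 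The main obstacle is the adjacency condition: given $p=\{t_1,t_2\}\in R$ with $t_1<t_2$ and a position $t$ of $w$ lying strictly between them with $t\notin\bigcup\{q\in R:q<p\}$, I choose $n$ large enough that $p\in R_n$ and $w(t)\in A_{I_n}$; the adjacency of $t_1,t_2$ in the pre-$p$ complement of $R_n$ then forces $t$ to be paired in $R_n$ by some $q'<p$, whence $q'<p$ holds in $R$ too, contradicting the choice of $t$. The remaining properties (disjointness of pairs, inverse labels, and that the surviving positions of $w$ form the word $u$) all reduce in the same manner to corresponding facts about one $R_n$ at a time.
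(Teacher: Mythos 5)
Your forward direction is fine and matches the paper. The backward direction, however, has a genuine gap at its very first step: the coherence claim $u_n\restr I_m=u_m$ for $m\le n$ is false, and \eqref{inducedreductioninformal} does not give it. What \eqref{inducedreductioninformal} yields is only that $u_n\restr I_m$ is \emph{some} reduct of $w\restr I_m$, hence $r(u_n\restr I_m)=u_m$; but a restriction of a reduced word need not itself be reduced. Concretely, take $w=\ve_0\,\ve_1\,\ev_0$, which is reduced. Then $u_1=r(w\restr\{0,1\})=w$, but $u_1\restr\{0\}=\ve_0\ev_0$, whereas $u_0=r(w\restr\{0\})=\es$. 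This is precisely the phenomenon the paper points out right after defining permanence: a pair cancelled at level $m$ can become permanent at a higher level $n$ once a letter from $A_{I_n}\sm A_{I_m}$ sits between its two positions. As a consequence your ``canonical inclusions'' $S_m\hookrightarrow S_n$ do not exist, the direct limit $u$ is not defined (and the claim that $u^{-1}(a)=u_k^{-1}(a)$ stops growing fails for the same reason), and the K\"onig setup collapses as well: restricting a reduction of $w\restr I_{n+1}$ that witnesses $u_{n+1}$ gives a reduction of $w\restr I_n$ onto $u_{n+1}\restr I_n$, which need not be $u_n$ --- in the example above, the empty reduction witnesses $u_1$, but its restriction is the empty reduction of $\ve_0\ev_0$ and does not witness $u_0=\es$. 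So the family of ``witnessing'' reductions is not closed under restriction and K\"onig's lemma cannot be applied to it as you propose.

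The paper avoids this trap by never trying to assemble the limit word from the $r(w\restr I_n)$: it first reduces $w$ and $w'$ to reduced words $v$ and $v'$ (Lemma~\ref{lemma:reduce}, via Zorn), and then compares $v$ with $v'$ directly. From the hypothesis one gets $r(v\restr I)=r(v'\restr I)$ for all finite $I$, hence for each $I$ finitely many order isomorphisms between possible domains of $r(v\restr I)$ and $r(v'\restr I)$ commuting with $v,v'$; these are compatible under restriction by \eqref{inducedreductioninformal}, so K\"onig's lemma yields a nested sequence of partial isomorphisms, and Lemma~\ref{permanent} (every position of a reduced word is permanent) guarantees that their union is a bijection of all of $S$ onto $S'$. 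If you want to salvage your construction, you would essentially have to replace the positions of $u_n$ by the positions that are permanent from level $n$ on, which reconstructs the paper's permanence machinery rather than bypassing it.
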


\begin{proof}
The forward implication follows easily from~\eqref{inducedreductioninformal}. Conversely, suppose that $r(w\restr I) = r(w'\restr I)$ for every finite $I\sub\N$. By Lemma~\ref{lemma:reduce}, $w$~and $w'$ can be reduced to reduced words $v$ and~$v'$, respectively. Our aim is to show that $v=v'$, that is to say, to find an order-preserving bijection $\phi\colon S\to S'$ between the domains $S$ of $v$ and $S'$ of~$v'$ such that $v = v'\circ\phi$. For every finite~$I$, our assumption and the forward implication of the lemma yield
 $$r(v\restr I)  = r(w\restr I) = r(w'\restr I) = r(v'\restr I)\,.$$
 Hence for every possible domain $S_I\sub S$ of $r(v\restr I)$ and every possible domain $S'_I\sub S'$ of~$r(v'\restr I)$ there exists an order isomorphism $S_I\to S'_I$ that commutes with $v$ and~$v'$. For every~$I$, there are only finitely many such maps $S_I\to S'_I$, since there are only finitely many such sets $S_I$ and~$S'_I$. And for $I\sub J$, every such map $S_J\to S'_J$ induces such a map $S_I\to S'_I$ with $S_I\sub S_J$ and $S'_I\sub S'_J$, by \eqref{inducedreductioninformal}. Hence by the infinity lemma there exists a sequence $\phi_0\sub \phi_1\sub\dots$ of such maps $\phi_n\colon S_{\{0,\dots,n\}}\to S'_{\{0,\dots,n\}}$, whose union~$\phi$ maps all of $S$ onto~$S'$, since by Lemma~\ref{permanent} every position of $v$ and of $v'$ is permanent.
\end{proof}

With Lemma~\ref{reduceonI} we are now able to prove:

\begin{lemma}\label{uniquereduced}
  Every word reduces to a unique reduced word. 
\end{lemma}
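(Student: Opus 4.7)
The plan is to derive uniqueness as a short corollary of the two preceding lemmas, Lemma~\ref{permanent} (characterising reduced words via permanent positions) and Lemma~\ref{reduceonI} (comparing reductions via finite restrictions). Existence is already given by Lemma~\ref{lemma:reduce}, so the entire task is to show that if a word $w\colon S\to A$ reduces to two reduced words $v_1$ and $v_2$, then $v_1=v_2$ as abstract words.

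First I would establish the auxiliary claim that if $w$ reduces to a word $v$, then $r(w\restr I)=r(v\restr I)$ for every finite $I\sub\N$. This is exactly the principle recorded in~\eqref{inducedreductioninformal}: reducing first and then restricting yields the same finite word as restricting first and then reducing (and the finite reduction is unique). Applying this with $v=v_1$ and then with $v=v_2$ gives
\[
r(v_1\restr I)=r(w\restr I)=r(v_2\restr I)\quad\text{for every finite }I\sub\N.
\]

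Next I would invoke the backward direction of Lemma~\ref{reduceonI}, applied now to the pair $v_1,v_2$: since their finite restrictions reduce to the same finite words, $v_1$ and $v_2$ can themselves be reduced to a common (abstract) word~$v$. But by assumption both $v_1$ and $v_2$ are reduced, so Lemma~\ref{permanent} tells us that every position of $v_1$ and every position of $v_2$ is permanent. In particular, the only word to which $v_1$ reduces is $v_1$ itself, and similarly for $v_2$. Hence $v_1=v=v_2$, which is the desired uniqueness.

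I do not expect any genuine obstacle: the hard work has already been done in Lemmas~\ref{permanent} and~\ref{reduceonI}, and the infinity-lemma argument that made Lemma~\ref{reduceonI} possible. The only point that needs a moment's care is to notice that uniqueness of finite reductions (which is classical in free groups) feeds through~\eqref{inducedreductioninformal} into the statement $r(w\restr I)=r(v\restr I)$ used above; once that is in hand, the proof is essentially a single application of Lemma~\ref{reduceonI} followed by the reduced-equals-permanent characterisation.
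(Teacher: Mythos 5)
Your proposal is correct and follows essentially the same route as the paper: equality of the reduced finite restrictions (via \eqref{inducedreductioninformal}, which is exactly the easy direction of Lemma~\ref{reduceonI}), then the non-trivial direction of Lemma~\ref{reduceonI} applied to the two reduced words, and finally the observation that a reduced word reduces only to itself. The only cosmetic difference is your appeal to Lemma~\ref{permanent} for that last step, which the paper gets directly from the definition of ``reduced'' (no nonempty reduction).
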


\begin{proof}
  By Lemma~\ref{lemma:reduce}, every word $w$ reduces to some reduced word~$w'$. Suppose there is another reduced word $w''$ to which $w$ can be reduced. By the easy direction of Lemma~\ref{reduceonI}, we have
 $$r(w'\restr I) = r(w\restr I) = r(w''\restr I)$$
 for every finite~$I\sub\N$. By the non-trivial direction of Lemma~\ref{reduceonI}, this implies that $w'$ and~$w''$ can be reduced to the same word. Since $w'$ reduces only to~$w'$ and $w''$ reduces only to~$w''$, this must be the word $w'=w''$.
\end{proof}

As in the case of finite words, we denote the unique reduced word that a word $w$ reduces to by~$r(w)$. The set of reduced words now forms a group $F_\infty$, with multiplication defined as $(w_1,w_2)\mapsto r(w_1w_2)$, identity the empty word~$\es$, and the inverse $w^-$ of $w\colon S\to A$ defined as the map on the same~$S$, but with the inverse ordering, that satisfies $\{w(s),w^-(s)\} = \{\ve_i,\ev_i\}$ for some~$i$ for every~${s\in S}$. (Thus, $w^-$~is $w$ taken backwards and with reverse chord orientations.) Note that the proof of associativity requires an application of Lemma~\ref{uniquereduced}.

In~\cite{FundGp}, we prove that $F_\infty$ embeds canonically in the inverse limit of the groups~$F_I$ (which by \eqref{inducedreductioninformal} form an inverse system with respect to restriction), and that $\pi_1(|G|)$ embeds canonically in~$F_\infty$. All we need here is the existence of that particular homomorphism $\pi_1(|G|)\to F_\infty$, which is defined as follows.

Every path $\sigma$ in $|G|$ defines a word~$w_{\sigma}$ by its passes through the chords of~$T$. Formally, we take as $S$ the set of the domains $[a,b]$ of passes of~$\sigma$, ordered naturally as disjoint subsets of~$[0,1]$, and let $w_{\sigma}$ map every $[a,b]\in S$ to the directed chord that $\sigma$ traverses on~$[a,b]$. We call $w_{\sigma}$ the \emph{trace} of $\sigma$.

\begin{lemma}\label{lemma:reducible1}
  The traces of homotopic paths in $|G|$ reduce to the same word.
   \end{lemma}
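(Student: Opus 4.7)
The plan is to reduce the claim, via Lemma~\ref{reduceonI}, to the assertion that $r(w_\sigma\restr I)=r(w_{\sigma'}\restr I)$ for every finite $I\sub\N$, and then to prove this finite assertion by pushing $\sigma$ and $\sigma'$ into a wedge of $|I|$ circles via a suitable continuous map whose induced homomorphism on $\pi_1$ takes $[\sigma]$ to $r(w_\sigma\restr I)$.

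Concretely, fix a finite $I\sub\N$ and let $X_I$ be a wedge of circles $\{C_i:i\in I\}$ based at a common point~$*$, with each $C_i$ parametrised by the same $\eta_1\colon[0,1]\to S^1$ used in Section~\ref{comp}. Define $f_I\colon|G|\to X_I$ by mapping each chord~$e_i$ with $i\in I$ homeomorphically onto $C_i$ in its natural direction (so the endpoints of $e_i$ go to~$*$), and by sending everything else---the closure $\Tbar$ of~$T$ in~$|G|$ (vertices, tree edges, and \emph{all} ends) and every chord $e_j$ with $j\notin I$---to~$*$. Continuity is easy: the preimage of $*$ is closed in $|G|$, preimages of open subsets of $C_i\sm\{*\}$ lie inside $\interior e_i$, and for any neighbourhood $U$ of~$*$ in~$X_I$ and any end $\omega$, choosing a finite vertex set~$S$ containing the endpoints of all $e_i$ with $i\in I$ ensures that $f_I$ maps the basic open neighbourhood $\hat C(S,\omega)$ into~$U$, because $\hat C(S,\omega)$ meets no chord $e_i$ with $i\in I$ at all.

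Now let $\sigma$ be a path in $|G|$ with endpoints in $V\cup\Omega(G)$; then $f_I\circ\sigma$ is a loop in $X_I$ based at~$*$. By Lemma~\ref{pass} and the definition of~$f_I$, this loop is constant at~$*$ outside the finitely many closed subintervals that are the domains of passes of $\sigma$ through chords $e_i$ with $i\in I$; on each such subinterval it performs exactly one full traversal of the corresponding circle~$C_i$, in the positive or negative direction according to whether $\sigma$ traverses $\ve_i$ or~$\ev_i$. With the canonical identification $\pi_1(X_I,*)=F_I$ sending the $i$-th generator to~$\ve_i$, the element of $\pi_1(X_I,*)$ represented by $f_I\circ\sigma$ is therefore the reduced word $r(w_\sigma\restr I)$.

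Finally, if $H$ is a homotopy in $|G|$ (rel~$\{0,1\}$) between $\sigma$ and~$\sigma'$, then $f_I\circ H$ is a homotopy in $X_I$ between $f_I\circ\sigma$ and $f_I\circ\sigma'$; hence these loops represent the same element of $\pi_1(X_I,*)=F_I$, giving $r(w_\sigma\restr I)=r(w_{\sigma'}\restr I)$. As $I$ was arbitrary, Lemma~\ref{reduceonI} yields $r(w_\sigma)=r(w_{\sigma'})$. The only delicate point is the continuity of~$f_I$ at ends, which is precisely where the finiteness of $I$ is used: with only finitely many chords relevant, every end has arbitrarily small neighbourhoods meeting none of them, and the argument goes through.
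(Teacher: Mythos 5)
Your overall route is sound and genuinely different from the paper's: instead of attaching 2-cells along the fundamental circles of the chords $e_j$ with $j\notin I$ and deformation retracting to a finite graph $G_I$ (and then quoting the finite-graph case), you collapse $\Tbar$ and all irrelevant chords to a point and read off the class of the pushed-forward loop in the free group $\pi_1(X_I,*)=F_I$ of a wedge of $|I|$ circles; this also treats paths directly, avoiding the paper's final step of closing paths into loops by appending an arc in~$\Tbar$. However, one step is false as stated: the claim that $f_I\circ\sigma$ is constant at~$*$ outside the domains of the passes of $\sigma$ through the chords $e_i$ with $i\in I$. Between two passes, $\sigma$ may enter $\interior e_i$ for some $i\in I$ and leave it again across the \emph{same} endvertex, so that no pass is completed---and it may do so infinitely often. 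On each such excursion $f_I\circ\sigma$ moves into $C_i\sm\{*\}$, so the composed loop is not simply a finite concatenation of full traversals and constant pieces, and its class in $\pi_1(X_I,*)$ is not yet identified with the word $w_\sigma\restr I$.

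The gap is repairable by exactly the argument in the proof of Lemma~\ref{traverse}: each maximal excursion into $\interior e_i$ that returns through the same endvertex yields a segment of $f_I\circ\sigma$ that is null-homotopic rel its endpoints within its image together with~$*$ (its ``lift'' to the closed edge $e_i$ starts and ends at the same endvertex, so the winding number is zero), and since there may be infinitely many such excursions one needs Lemma~\ref{infhomotopies} to combine these homotopies into a single homotopy from $f_I\circ\sigma$ to a loop that \emph{is} constant outside the pass domains. With this inserted, your identification $[f_I\circ\sigma]=r(w_\sigma\restr I)$ is correct, and the remainder of your argument (continuity of $f_I$, in particular at ends via $\hat C(S,\omega)$ with $S$ containing the endvertices of the $e_i$, $i\in I$; pushing the homotopy forward; then Lemma~\ref{reduceonI}) goes through. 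A cosmetic point: a closed edge cannot be mapped homeomorphically onto the circle $C_i$; what you mean is the quotient map identifying the two endvertices with~$*$, i.e.\ the analogue of the maps $f_e$ from Section~\ref{comp}.
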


\begin{proof}
We first consider homotopic loops $\alpha\sim\beta$ in~$|G|$, based at the same vertex or end. We wish to show that $r(w_\alpha) = r(w_\beta)$. By Lemma~\ref{reduceonI} it suffices to show that $r(w_{\alpha}\restr I) = r(w_{\beta}\restr I)$ for every finite $I\subset\N$. Consider the space obtained from $|G|$ by attaching a 2-cell to $|G|$ for every $j\notin I$, by an injective attachment map from the boundary of the 2-cell onto the unique image of the fundamental circles of~$e_j$. This space deformation retracts onto $\Tbar\cup\{e_i\mid i\in I\}$, and hence by Lemma~\ref{clNST} onto a finite graph $G_I$ consisting of a subtree of~$T$ and the chords $e_i$ with $i\in I$. Composing $\alpha$ and $\beta$ with the map $|G|\to G_I$ in which this retraction ends yields homotopic loops $\alpha'$ and $\beta'$ in~$G_I$, whose traces in $F_I$ are $w_{\alpha'} = w_{\alpha}\restr I$ and $w_{\beta'} = w_{\beta}\restr I$. Since $\langle\gamma\rangle\mapsto r(w_\gamma)$ is known to be well defined for finite graphs, we deduce that
 $$r(w_{\alpha}\restr I) = r(w_{\alpha'}) = r(w_{\beta'}) = r(w_{\beta}\restr I)\,.$$
 This completes the proof of the lemma for loops. The general case follows, since paths joining the same two vertices or ends can be made into loops by appending a path in $\Tbar$ joining their endpoints, which does not change their traces.
\end{proof}

Lemma~\ref{lemma:reducible1} implies in particular that the map $\langle\alpha\rangle\mapsto r(w_\alpha)$ from $\pi_1(|G|)$ to $F_\infty$ is well defined. By~\eqref{inducedreductioninformal}, it is a homomorphism. In~\cite{FundGp} we  prove that it is injective---the converse of Lemma~\ref{lemma:reducible1}---and determine its image. All in all, the fundamental group of $|G|$ can be described combinatorially by canonical group embeddings $\pi_1(|G|)\to F_\infty\to \varprojlim F_I$ as follows (see~\cite{FundGp} for precise definitions):

\begin{theorem}[\cite{FundGp}]\label{pi1thm}
   Let $G$ be a locally finite connected graph. Let $T$ be a normal spanning tree of~$G$, and let $e_0, e_1,\dots$ be its chords.
\begin{enumerate}[\rm (i)]
  \item The map $\langle\alpha\rangle\mapsto r(w_\alpha)$ is an injective homomorphism from $\pi_1(|G|)$ to the group $F_\infty$ of reduced finite or infinite words in $\{\ve_0, \ve_1, \dots\}\cup\{\ev_0, \ev_1,\dots\}$, with image the set of words whose monotonic subwords converge in~$|G|$.
   \item The homomorphisms $w\mapsto r(w\restr I)$ from $F_\infty$ to~$F_I$ embed $F_\infty$ as a subgroup in~$\varprojlim F_I$. It consists of those elements of~$\varprojlim F_I$ whose projections $r(w\restr I)$ use each letter only boundedly often. (The bound may depend on the letter.)
\end{enumerate}
\end{theorem}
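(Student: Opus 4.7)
The plan is to build on the machinery developed in Sections~2--4. Lemma~\ref{lemma:reducible1} together with~\eqref{inducedreductioninformal} already implies that $\langle\alpha\rangle\mapsto r(w_\alpha)$ is a well-defined group homomorphism from $\pi_1(|G|)$ to $F_\infty$, so what remains for part~(i) is injectivity and the characterization of the image. For part~(ii), the restriction maps $w\mapsto r(w\restr I)$ are compatible with each other by~\eqref{inducedreductioninformal}, so they induce a homomorphism $F_\infty\to\varprojlim F_I$; its injectivity follows essentially at once from Lemmas~\ref{reduceonI} and~\ref{uniquereduced}.

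For injectivity of the map in~(i), it suffices to show that every loop $\alpha$ with $r(w_\alpha)=\es$ is null-homotopic. Fix a reduction $R$ of $w_\alpha$ to~$\es$. The idea is to convert $R$ into a homotopy: each pair $p=\{s,s'\}\in R$ records two passes of $\alpha$ through a common chord $e_i$ in opposite directions, and the segment of $\alpha$ between these passes uses only chords whose cancelling partner pairs come strictly after $p$ in the order of $R$. For each such $p$, the subloop consisting of the outgoing pass through $e_i$, the intermediate segment, the return pass through $e_i$, and the unique arc in $\Tbar$ closing it up, lies in a set to which Lemma~\ref{clNST} applies, and so is null-homotopic. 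The plan is to apply these local null-homotopies simultaneously: view the scopes of the pairs $p\in R$ as countably many essentially disjoint subintervals of $[0,1]$, carry out the corresponding local homotopies within $\alpha(\text{scope})\cup\Tbar$, and glue them together using Lemma~\ref{infhomotopies}. Because $R$ is only linearly ordered, not well-ordered, this must be done by exhausting $R$ with well-ordered initial parts indexed along a cofinal chain; the resulting limit path lies entirely in~$\Tbar$ and is null-homotopic by Lemma~\ref{clNST} again.

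For the image in~(i), the forward inclusion is direct: if $(\ve_{i_n})_n$ is a monotonic subword of $w_\alpha$, its position intervals form a monotone sequence in~$[0,1]$ converging to some~$t$, and continuity of~$\alpha$ at~$t$ forces the chords $e_{i_n}$, together with their endvertices, to converge to~$\alpha(t)$ in~$|G|$. Conversely, given a reduced word $w$ all of whose monotonic subwords converge in $|G|$, one constructs a loop $\alpha$ with $r(w_\alpha)=w$ by inserting, between the letters of $w$ (according to the order type of its position set), the unique arcs in~$\Tbar$ connecting consecutive chord-endvertices, and then extending continuously at limit positions using the convergence hypothesis; continuity is verified much as in Lemma~\ref{surjectivity}. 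For~(ii), the only substantive claim is the image description. For $w\in F_\infty$ the count $|r(w\restr I)^{-1}(\ve_i)|$ is bounded by $|w^{-1}(\ve_i)|$, since reduction never increases letter counts, giving the uniform bound. Conversely, given a coherent family $(v_I)\in\varprojlim F_I$ with bounded letter usage, one applies K\"onig's infinity lemma letter by letter to stabilise, across all sufficiently large~$I$, the relative positions of each $\ve_j$ among the positions of letters from $A_I$; these stabilising patterns assemble into a totally ordered position set for a reduced word $w\in F_\infty$ that projects onto~$(v_I)$.

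The principal obstacle is realising a transfinite reduction by a genuine homotopy in part~(i). A reduction of an infinite word is only linearly ordered and may have a dense or otherwise complicated order type, so one cannot cancel pairs one at a time; the proof must organise the infinitely many local modifications so that their scopes fit into globally compatible subintervals of $[0,1]$ on which Lemma~\ref{infhomotopies} can be applied, and one must ensure that the cumulative homotopy remains inside arbitrarily prescribed neighbourhoods of ends. Managing this uniform control at ends, rather than the combinatorics of reductions itself, is where the bulk of the technical work in~\cite{FundGp} will lie.
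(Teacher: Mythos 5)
This theorem is not proved in the paper at all: it is quoted from the companion paper~\cite{FundGp}, and the only ingredient established here is the easy half, namely that $\langle\alpha\rangle\mapsto r(w_\alpha)$ is well defined on homotopy classes (Lemma~\ref{lemma:reducible1}) and a homomorphism via~\eqref{inducedreductioninformal}. So your proposal must be judged on its own merits as a sketch of the missing argument. The routine parts are fine: well-definedness, the fact that the maps $w\mapsto r(w\restr I)$ induce an injective homomorphism $F_\infty\to\varprojlim F_I$ (Lemmas \ref{reduceonI} and~\ref{uniquereduced}), the necessity of the bounded-usage condition in~(ii), and the forward direction of the image description in~(i).

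The heart of the theorem, however, is the injectivity in~(i), and there your argument has a genuine gap. A small slip first: for a pair $p=\{s,s'\}\in R$, the positions strictly between $s$ and $s'$ are deleted by pairs that come strictly \emph{before} $p$ in the ordering of $R$ (that is what makes $s,s'$ adjacent when $p$ is reached), not after. More seriously, the subloop of $\alpha$ between the two passes of a cancelling pair does \emph{not} in general lie in a subset of $\Tbar$: it traverses all the chords cancelled by the pairs nested inside~$p$. Hence Lemma~\ref{clNST}, which applies only to closed connected subsets of~$\Tbar$, cannot be used to contract it directly; one would first have to homotope away the inner passes, and this recursion is exactly what fails when $R$ is not well-ordered (the paper's footnote on the binary tree gives reductions with no innermost pairs along some branches). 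Your proposed repair---exhaust $R$ along a cofinal well-ordered chain and glue with Lemma~\ref{infhomotopies}---restates the difficulty rather than resolving it: Lemma~\ref{infhomotopies} requires the two paths to agree outside the chosen disjoint intervals and each local homotopy to stay inside the union of the images of the two restrictions, whereas your local contractions must pass through $\Tbar$-arcs that need not lie in those images; and composing infinitely many successive homotopies requires uniform control near ends that you acknowledge but do not supply. The converse constructions (realising a word with convergent monotonic subwords as a loop, and the K\"onig-lemma argument for the image in~(ii)) are plausible in outline but likewise unverified. In short, the proposal correctly locates where the work lies, but the main step remains open here; it is carried out in~\cite{FundGp}, not in this paper.
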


Using this characterization of $\pi(|G|)$, it is not hard to see that $\vCC(G)$ is the \emph{strong abelianization} of $\pi(|G|)$: the quotient of $\pi(|G|)$ obtained by identifying classes $\langle\alpha\rangle,\langle\beta\rangle$ whenever $r(w_{\alpha})$ and $r(w_{\beta})$ use each letter the same number of times (see~\cite[Sec.~4]{CannonConnerER} for a formal definition of strong abelianization).

\section{Distinguishing boundaries from other chains}\label{windup}

In this section we wind up our proof of Theorem~\ref{compthm}, picking up the thread from Section~\ref{comp}. There we had defined a group homomorphism ${f\colon H_1(|G|)\to\vEE(G)}$, shown that its image is~$\vCC(G)$ (Lemmas \ref{rangeC} and~\ref{surjectivity}), and seen that $f$ is injective if $G$ contains only finitely many circuits. The assertion left to prove is that $f$ is not injective if $G$ contains infinitely many circuits, which we now assume. 

Let $T$ be a normal spanning tree of~$G$. Each of the infinitely many circuits in $G$ is a finite sum (mod~2) of distinct fundamental circuits of~$T$~\cite[Thm. 1.9.6]{DiestelBook05}. Therefore $T$~has infinitely many chords, $e_0,e_1,\dots$ say. Since $|G|$ is compact, there is a sequence $\ve_{i_0},\ve_{i_1},\dotsc$ of chords whose first points converge to an end $\omega$ of~$G$. There exists a loop $\rho$ in~$|G|$, based at a vertex, that traverses $\ve_{i_0},\ve_{i_1},\dotsc,\ev_{i_0},\ev_{i_1},\dotsc$ in this order and runs otherwise along~$\Tbar$. (Thus, $\rho~$~starts with passes through $\ve_{i_0},\ve_{i_1},\dots$, interspersed with finite segments of~$T$ between the endpoints of these passes, until it reaches~$\omega$, from where it returns along $\Tbar$ to the starting vertex of~$\ev_{i_0}$; it then traverses $\ev_{i_0},\ev_{i_1},\dots$ interspersed with connecting segments of $T$ to reach $\omega$ a second time, and finally returns from there along~$\Tbar$ to its starting vertex. Note that the convergence of $e_{i_0}, e_{i_1},\dots$ is essential for $\rho$ to be a path: there is no path in $|G|$ through an $\omega$-sequence of chords that does not converge.) Since $\rho$ traverses the chords of $T$ equally often in both directions, Theorem~\ref{orthogonal} and Lemmas \ref{traverse} and~\ref{rangeC} imply that $\rho$ also traverses the edges of $T$ equally often in both directions.

Hence $f([\rho])=0\in\vCC(G)$. To complete the proof that $f$ is not injective, and thereby the proof of Theorem~\ref{compthm}, we show that $[\rho]\ne0$, i.e.\ that $\rho$ is not the boundary of any 2-chain. In order to do so, we shall use our results from Section~\ref{pi1sec} to define an invariant of 1-chains that can distinguish $\rho$ from boundaries. As in Section~\ref{pi1sec} we consider only paths whose boundary points are vertices or ends, so our invariant will be defined only for chains of 1-simplices with this property. However, it is easy to see that this entails no loss of generality.%
\footnote{Indeed, if $\rho = \sum\lambda_n\partial\tau_n$ for 2-simplices~$\tau_n$, we can modify each $\tau_n$ into another 2-simplex $\tau'_n$ whose 0-faces are vertices or ends, ans such that $\rho = \sum\lambda_n\partial\tau_n$. For every inner point $x$ of an edge $e_x=u_x v_x$ in $|G|$ pick a fixed path $\pi_x$ from $x$ to~$v_x$ (say). Then append to every 1-simplex $\sigma$ occurring in the boundary of a $\tau_n$ and ending in such a point $x$ the path~$\pi_x$ after~$x$, turning $\sigma$ into a path $\sigma'$ between two vertices by appending at most two such paths~$\pi_x$. Then if $\partial\tau_n = \sigma_0 - \sigma_1 + \sigma_2$, say, it is easy to see that also $\sigma'_0 - \sigma'_1 + \sigma'_2$ is the boundary of a 2-simplex~$\tau'_n$. And clearly $\rho = \sum\lambda_n\partial\tau_n$ implies that also $\rho = \sum\lambda_n\partial\tau'_n$, since we modified only 1-simplices that cancelled out anyway in this sum.}

We need some more notation. Given $k\in\N$ and a reduced word $w\colon S\to A$ (where $A=\{\ve_0,\ev_0,\ve_1,\ev_1,\dotsc\}$ as before), write $n^+(w,k)$ for the number of intervals in $S$ which can be written as $\{s_0,s_1,\dotsc\}$ with $s_0<s_1<\dots$ and $w(s_j)=\ve_{i_{k+j}}$ for every $j\in\N$. (This number exists: there are at most $|w^{-1}(\ve_{i_k})|$ such intervals, and this number is finite by our definition of `word'.) Put
 $$n(w,k)\assign n^+(w,k)-n^+(w^-,k)\in\Z\,.$$
 (Recall that $w^-$ is $w$ backwards with inverse letters, so $n^+(w^-,k)$ counts the intervals in $S$ which can be written as $\{s_0,s_1,\dotsc\}$ with $s_0>s_1>\dots$ and $w_{\sigma}(s_j)=\ev_{i_{k+j}}$ for every $j\in\N$.)

Given $k\in\N$ and a path $\sigma$ in~$|G|$, let $N(\sigma,k)\assign n(r(w_{\sigma}),k)$. Given a 1-chain $\phi = \sum_n \lambda_n\sigma_n$, let $N(\phi,k) := \sum_n \lambda_n N(\sigma_n,k)$ for every fixed~$k$, and put 
 $$\textstyle N(\phi) := \min_k |N(\phi,k)|\,.$$
 Unlike $N(\phi,k)$ for fixed~$k$, the function $N$ is not a homomorphism. Nevertheless, it will help us distinguish our special path $\rho$ from boundaries: we shall prove that $N$ vanishes on boundaries, while clearly $N(\rho) = 1$. (Indeed, the word $w_{\rho}$ is easily seen to be reduced (cf.\ Lemma~\ref{permanent}); hence $N(\rho,k)=n(w_{\rho},k)=1$ for all~$k$, since $n^+(w_{\rho},k) = 1$ while $n^+(w_{\rho}^-,k) = 0$.)

We begin by noting a property of the function $n(w,k)$:
\begin{txteq}\label{subdivideN}
  If $w=w_1w_2$ is a reduced word, there exists a $k\in\N$ such that $n(w,\ell)=n(w_1,\ell)+n(w_2,\ell)$ for all $\ell\ge k$.
\end{txteq}
Indeed, denote the domains of $w_1$ and $w_2$ by $S_1$ and~$S_2$ (chosen disjoint); then the domain of $w$ is $S=S_1\cup S_2$, with $S_1$ preceding~$S_2$. If $S_1$ has a largest element, $s_1$~say, choose $k$ large enough that $w(s_1)\notin\{\ve_{i_k},\ev_{i_k},\ve_{i_{k+1}},\ev_{i_{k+1}},\dotsc\}$. Then for every $\ell\ge k$ none of the intervals in $S$ counted by $n(w,\ell)$ meets both $S_1$ and $S_2$, since these intervals cannot contain~$s_1$. Hence every such interval is either an interval of $S_1$ or one of~$S_2$, so $n(w,\ell)=n(w_1,\ell)+n(w_2,\ell)$ as desired. On the other hand if $S_1$ has no largest element, then no interval in $S$ that meets both $S_1$ and $S_2$ can be written as $\{s_0,s_1,\dotsc\}$ with $s_0<s_1<\dots$ or $s_0>s_1>\dots$, so none of the intervals counted by $n(w,k)$ for any $k$ meets both $S_1$ and~$S_2$. Hence, in this case, $n(w,k)=n(w_1,k)+n(w_2,k)$ for all $k$.

For our proof that $N$ vanishes on boundaries~$\phi$, it suffices to show that every 2-simplex $\tau$ satisfies $N(\partial\tau,k) = 0$ for large enough~$k$: then $N(\phi,k)=0$ for some (large)~$k$, and hence $N(\phi)=0$ as claimed. So consider a 2-simplex~$\tau$, with boundary $\partial\tau = \sigma_2 - \sigma_1 + \sigma_0$ denoted so that $\sigma_2$ ends at the starting vertex of~$\sigma_0$. Write $w_i := r(w_{\sigma_i})$ for the words to which the traces of the $\sigma_i$ reduce ($i=1,2,3$), and $w_{20} := r(w_{\sigma_{20}})$, where $\sigma_{20}:= \sigma_2 \sigma_0$ is the path consisting of $\sigma_2$ followed by~$\sigma_0$.

Note that $w_{20} = r(w_2 w_0)$. Indeed, we can reduce $w_{\sigma_{20}}$ by first applying to $w_{\sigma_2}\sub w_{\sigma_{20}}$ the reduction that turns $w_{\sigma_2}$ into~$w_2$, and then apply to $w_{\sigma_0}\sub w_{\sigma_{20}}$ the reduction that turns $w_{\sigma_0}$ into~$w_0$. Together this is a reduction of $w_{\sigma_{20}}$ to $w_2 w_0$. Let $R$ be a reduction of $w_2 w_0$ to~$r(w_2 w_0)$. Since we started with~$w_{\sigma_{20}}$, the reduced word $r(w_2 w_0)$ we end up with is $r(w_{\sigma_{20}}) = w_{20}$ (Lemma~\ref{uniquereduced}).

Let us look at what $R$ does. Since $w_2$ and $w_0$ are both reduced, every pair of positions in $R$ has one position in $w_2$ and the other in~$w_0$. Hence if $w$ denotes the subword of $w_2$ whose positions are deleted by~$R$, we have found reduced words $w, w'_2, w'_0$ such that
 $$w_2 = w'_2 w\quad\text{and}\quad w_0 = w^-w'_0\quad\text{and}\quad  w_{20} = w'_2 w'_0\,.$$
 By~\eqref{subdivideN}, therefore, we have for all large enough $k$
\begin{align*}
  n(w_2,k) &= n(w'_2,k) + n(w,k)\\
  n(w_0,k) &= n(w^-,k) + n(w'_0,k)\\
  n(w_{20},k) &= n(w'_2,k) + n(w'_0,k)\,.
  \end{align*}
As $n(w^-,k) = -n(w,k)$, we deduce that
 $$n(w_2,k) + n(w_0,k) - n(w_{20},k) = 0$$
for all these~$k$.

Since $\sigma_{20}$ is homotopic to~$\sigma_1$ (across~$\tau$), Lemma~\ref{lemma:reducible1} implies that $w_{20} = w_1$. We therefore deduce that
 $$N(\partial\tau,k) = N(\sigma_2,k) + N(\sigma_0,k) - N(\sigma_1,k) = 0$$
for all large enough~$k$, as desired. This completes the proof of Theorem~\ref{compthm}.

\section{A new homology for non-compact complexes}\label{new}

In this section we introduce a modification of singular homology for any topological space $X$ embedded in a larger space~$\hat X$, which assigns a special role to the points in $\hat X\sm X$. The kind of example we have in mind is that $X$ is locally compact and $\hat X$ a compactification of~$X$; see e.g.~\cite{AbelsStrantzalos} for more on such spaces. For this reason we shall call the points in $\hat X\sm X$ the \emph{ends} of~$X$; but formally we make no assumptions other than that $X\sub \hat X$. For compact $X=\hat X$ our homology will coincide with standard singular homology. When $X$ is a graph and $\hat X$ its Freudenthal compactification, the first group of our homology will be canonically isomorphic to the group of the topological cycle space of~$X$.

Although our chains, cycles etc.\ will live in~$\hat X$, we shall denote their groups as $C_n(X)$, $Z_n(X)$ etc, with reference to $X$ rather than~$\hat X$: this is because ends will play a special role, so the information of which points of $\hat X$ are ends must be encoded in the notation for those groups.

Let us call a family $(\sigma_i\mid i\in I)$ of singular $n$-simplices in $\hat X$ \emph{good} if
\begin{enumerate}[(i)]
\item $(\sigma_i\mid i\in I)$ is locally finite in~$X$, that is, every $x\in X$ has a neighbourhood in $X$ that meets the image of $\sigma_i$ for only finitely many~$i$;
\item every $\sigma_i$ maps the 0-faces of~$\Delta^n$ (the standard $n$-simplex) to~$X$.
\end{enumerate}
Note that if $X$ is locally compact, then (i)~is equivalent to asking that every compact subspace of $X$ meets the image of $\sigma_i$ for only finitely many~$i$.
Condition~(ii), like~(i), underscores that ends are not treated on a par with the points in~$X$: we allow them to occur on infinitely many~$\sigma_i$ (which (i) forbids for points of~$X$), but not in the fundamental role of images of 0-faces: all simplices must be `rooted' in~$X$. Since $X$ is, by assumption, a countable union of compact spaces, (i)~and (ii) together imply that good families are countable, i.e.\ that $|I|\le\aleph_0$.

When $(\sigma_i\mid i\in I)$ is a good family, any formal linear combination $\sum_{i\in I} \lambda_i \sigma_i$ with all $\lambda_i\in\Z$ is an \emph{$n$-chain in~$X$}. Since only finitely many $\sigma_i$ in a good family can be equal, we can add up the coefficients of equal terms and thus assume if desired that the $\sigma_i$ in an $n$-chain are pairwise distinct; however, we shall \emph{not} normally assume this. We write $C_n(X)$ for the group of all $n$-chains in~$X$, and $C'_n(X)$ for its subgroup of finite $n$-chains. The boundary operator $\partial_n\colon C_n\to C_{n-1}$ is defined as usual by extending linearly from~$\partial\sigma_i$. Note that $\partial$ is well defined (i.e., that it preserves the local finiteness required of chains), and $\partial^2 = 0$. Chains in $\im\partial$ will be called \emph{boundaries}.

As $n$-cycles, we do \emph{not} take the entire kernel of~$\partial_n$. Rather, we define $Z'_n(X) := \ke (\partial_n\restr C'_n(X))$, and let $Z_n(X)$ be the set of those $n$-chains that are sums of such finite cycles:
 $$Z_n (X) := \Big\{\phi\in C_n(X)\mid \phi = \sum_{j\in J} z_j
    \emtext{ with } z_j\in Z'_n(X)\ \forall j\in J\Big\}\,.$$
More precisely, an $n$-chain $\phi\in C_n(X)$ shall lie in $Z_n(X)$ if we can write it as $\phi = \sum_{i\in I}\lambda_i\sigma_i$ in such a way that $I$ admits a partition into finite sets~$I_j$ ($j\in J$) with $z_j := \sum_{i\in I_j} \lambda_i\sigma_i \in Z'_n(X)$ for every $j\in J$. Any such representation of $\phi$ as a formal sum will be called a \emph{standard representation} of~$\phi$ \emph{as a cycle}.%
\footnote{Since the $\sigma_i$ need not be distinct, $\phi$~has many representations as a formal sum. Not all of these need admit a partition as indicated---see below.}
We call the elements of $Z_n(X)$ the \emph{$n$-cycles} of~$X$.

The chains in $B_n(X) := \im\partial_{n+1}$ then form a subgroup of~$Z_n(X)$: by definition, they can be written as $n$-chains $\sum_{j\in J} z_j$ where each $z_j$ is the (finite) boundary of a singular $(n+1)$-simplex. We therefore have homology groups
 $$H_n(X) := Z_n(X)/B_n(X)$$
as usual. Alternatively, we may study the subgroups
 $$H'_n (X) := \big\{[z] : z\in Z'_n\big\}$$
of~$H_n(X)$ formed by the homology classes of finite cycles.

Note that if $X$ is compact, then all good families and hence all chains are finite, so the homology defined above coincides with the usual singular homology.
The characteristic feature of this homology is that while infinite cycles are allowed, they are always of `finite character': in any standard representation of an infinite cycle, every finite subchain is contained in a larger finite subchain that is already a cycle.

To illustrate all this let us look at a simple example,%
   \footnote{Another example will be given in~\eqref{CancellingBoundaries} on page~\pageref{CancellingBoundaries}, which the reader is invited to skip to now.}
the \emph{double ladder}. This is the 2-ended graph $G$ with vertices $v_n$ and~$v'_n$ for all integers~$n$, and with edges $e_n$ from $v_n$ to~$v_{n+1}$, edges $e'_n$ from $v'_n$ to~$v'_{n+1}$, and edges $f_n$ from $v_n$ to~$v'_n$. The 1-simplices corresponding to these edges, oriented in their natural directions, are $\theta_{e_n}$, $\theta_{e'_n}$ and~$\theta_{f_n}$. For the infinite chains $\phi:= \sum \theta_{e_n}$, $\phi':= \sum \theta_{e'_n}$ and $\psi := \phi-\phi'$ we have $\partial\phi = \partial\psi = 0$, and neither sum as written above contains a finite cycle. But while it can be shown that $\phi\notin Z_1(G)$, we have $\psi\in Z_1(G)$, because $\psi$ can be rewritten as $\phi = \sum z_n$ with finite cycles $z_n = \theta_{e_n} + \theta_{f_{n+1}} - \theta_{e'_n} - \theta_{f_n}$. By contrast, the unique representation of $\psi$ as a formal sum in which like terms are combined, $\psi = \phi - \phi'$, is not a standard representation of $\psi$ as a cycle. As we shall see in Section~\ref{H1equalsC}, we even have $[\psi]\in H'_1$, although this is not obvious now.

There are some immediate questions that arise from the definitions given above. For example, is $Z_n$ closed under locally finite sums? Is $H'_n = H_n$, i.e., is every homology class represented by a finite cycle? (For $n=1$ we shall see in the next section that it is.) We shall not pursue these questions here, but will explore the properties of our homology (including more fundamental questions than these) in another paper. In the remainder of this paper we show that our homology achieves its aim: for graphs, it captures precisely the oriented topological cycle space.

\section{\boldmath $H_1(G)$ equals $\vCC(G)$}\label{H1equalsC}

In this section we show that, for graphs~$G$, the groups $H_1 (G)$ and $H'_1 (G)$ defined in Section~\ref{new} coincide, and are canonically isomorphic to the topological cycle space $\vCC(G)$ of~$G$.

In analogy to our notation of Section~\ref{comp}, we shall denote this isomorphism by $f\colon H_1(G)\to \vCC(G)$. In our definition of $f$ we shall have to refer to the map which, in Section~\ref{comp}, was denoted as $f\colon H_1 (|G|)\to \vEE(G)$; this map will now be denoted as $f'\colon H_1 (|G|)\to \vEE(G)$. (Recall that $|G|$ denotes the Freudenthal compactification of~$G$, and that $H_1(|G|)$ is its usual first singular homology group.%
	\footnote{We shall use $C_1(G)$, $Z_1(G)$, $B_1(G)$ and $H_1(G)$ to refer to our new homology of~$|G|$ that relies on the information of which points of $|G|$ are ends, while $C_1(|G|)$, $Z_1(|G|)$, $B_1(|G|)$ and $H_1(|G|)$ continue to refer to the usual singular homology of the space~$|G|$.})
When $G$ is finite, our new function $f$ will coincide with~$f'$. 

In order to define~$f$, let $\phi\in Z_1(G)$ be given in any standard representation $\phi = \sum_{i\in I} \lambda_i \sigma_i$ as a cycle, and let $\ve\in\vE$ be any oriented edge. We shall first define $f([\phi])(\ve)\in\Z$ with reference to~$\phi$ and its given representation as a cycle, and then show that our definition does not depend on these choices.

To define~$f([\phi])(\ve)$, we show that for all large enough finite subchains $\phi'\in Z'_1(G)$ of $\phi$ the values of $f'([\phi'])(\ve)$ agree (the homology class $[\phi']$ being taken in~$H_1(|G|)$), and set $f([\phi])(\ve)$ to this common value. Write $I_e$ for the set of those $i\in I$ whose $\sigma_i$ meets~$e$; since $e$ is compact and $(\sigma_i\mid i\in I)$ is a good family, $I_e$~is a finite set.

Let $\pi\colon H_1(S^1)\to\Z$ and $f_e\colon |G|\to S^1$ be defined as in Section~\ref{comp}, and write $\fes\colon C_1(|G|)\to C_1(S^1)$ for the chain map induced by~$f_e$.

\begin{lemma}\label{fextension}
For all finite sets $I'$ such that $I_e\sub I'\sub I$ and $\phi':= \sum_{i\in I'} \lambda_i\sigma_i \in Z_1(|G|)$, the values of $f'([\phi'])(\ve)$ agree.
\end{lemma}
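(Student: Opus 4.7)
The plan is to reduce the claim to the standard fact that constant singular simplices are boundaries. By the definitions in Section~\ref{comp}, $f'([\phi'])(\ve) = \pi([\fes(\phi')])$, so it suffices to show that for any two admissible choices $I'$ and $I''$ the finite cycles $\phi_{I'} := \sum_{i\in I'} \lambda_i\sigma_i$ and $\phi_{I''} := \sum_{i\in I''} \lambda_i\sigma_i$ in $|G|$ satisfy $[\fes(\phi_{I'})] = [\fes(\phi_{I''})]$ in $H_1(S^1)$.

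The key observation is that if $i\notin I_e$ then the image of $\sigma_i$ avoids~$\interior e$, so $f_e\circ\sigma_i$ is the constant 1-simplex $c\colon\Delta^1\to\{1\}\sub S^1$. First I would form the difference $\phi_{I'} - \phi_{I''}$. This is a finite 1-cycle in $|G|$ by linearity of $\partial$, and since $I_e$ is contained in both $I'$ and~$I''$, the surviving terms of $\phi_{I'} - \phi_{I''}$ all carry indices in $I'\triangle I''\sub I\sm I_e$. By the key observation, $\fes(\phi_{I'} - \phi_{I''})$ is therefore an integer combination of copies of~$c$.

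The final step is to note that $c$ is a boundary in~$S^1$: the constant 2-simplex $C\colon\Delta^2\to\{1\}$ satisfies $\partial C = c - c + c = c$. Hence any integer combination of copies of $c$ lies in $B_1(S^1)$, so $\fes(\phi_{I'}) - \fes(\phi_{I''})$ is a boundary in~$S^1$. This yields $[\fes(\phi_{I'})] = [\fes(\phi_{I''})]$ in $H_1(S^1)$, and applying $\pi$ gives $f'([\phi_{I'}])(\ve) = f'([\phi_{I''}])(\ve)$, as desired. There is no substantial obstacle here: the result follows once one notes that $f_e$ collapses the complement of~$\interior e$ to a single point and that constant 1-simplices are null-homologous.
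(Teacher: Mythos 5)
Your proof is correct and follows essentially the same route as the paper: both hinge on the observation that for $i\notin I_e$ the composite $f_e\circ\sigma_i$ is the constant simplex at $1\in S^1$, hence null-homologous. The only cosmetic difference is that you compare two admissible choices $\phi_{I'},\phi_{I''}$ pairwise, while the paper compares every $\fes(\phi')$ to the fixed chain $\fes(\phi_e)$ (noting it is a cycle even though $\phi_e$ itself need not be); the content is the same.
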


\begin{proof}
Let $\phi_e := \sum_{i\in I_e} \lambda_i\sigma_i$. We show that even if $\phi_e$ is not a cycle in~$|G|$, the chain $\fes(\phi_e)$ is a cycle in~$S^1$ homologous to $\fes(\phi')$ for every $\phi'$ as stated. Then, by definition of~$f'$,
 $$f'([\phi'])(\ve) = \pi( (f_e)_* ([\phi']) )
     = \pi( [\fes(\phi')] ) = \pi( [\fes(\phi_e)] )$$
for all such~$\phi'$, and the result follows.

For a proof of $[\fes(\phi_e)] = [\fes(\phi')]$, note that for all $i\in I\sm I_e$ the map $f_e\circ\sigma_i$ is constant (with value $1\in\CC$). So for such~$i$, $f_e\circ\sigma_i$ is a null-homologous cycle. But $\fes(\phi_e)$ differs from~$\fes(\phi')$, which is a cycle, precisely by the terms $\lambda_i (f_e\circ\sigma_i)$ with $i\in I'\sm I_e$. Hence $\fes(\phi_e)$ too is a cycle,  and it is homologous to~$\fes(\phi')$.
\end{proof}

We now define $f\colon H_1(G)\to\vEE(G)$ by letting $f([\phi])$ map an oriented edge $\ve$ to the common value of $f'([\phi'])(\ve)$ for all $\phi'$ as in Lemma~\ref{fextension}. In order to show that $f$ is well defined, let $\phi\in Z_1(G)$ and $\psi\in B_1(G)$ be given in any standard representations $\phi = \sum_{i\in I} \lambda_i\sigma_i$ and $\psi = \sum_{i\in J} \lambda_i \sigma_i$ with $I\cap J = \es$. We show that $f$ assigns the same value to $[\phi]=[\phi+\psi]$ no matter whether we base its computation on $\phi$ or on~$\phi+\psi$: this proves that $f([\phi])$ depends neither on the choice of $\phi$ as a representative of~$[\phi]$ nor on its representation as $\sum_{i\in I}\lambda_i\sigma_i$.

Given $\ve\in\vE$, let $I_e$ be the set of all $i\in I$ such that $\sigma_i$ meets~$e$, and define $J_e$ likewise. Let $I'\sub I$ and $J'\sub J$ be finite sets containing $I_e$ and~$J_e$, respectively, such that $\phi' := \sum_{i\in I'} \lambda_i\sigma_i \in Z_1 (|G|)$ and $\psi' := \sum_{i\in J'} \lambda_i\sigma_i \in B_1 (|G|)$; such sets exist since $\phi$ and $\psi$ are given in standard representations. Then
 $$f'([\phi'])(\ve) = f'([\phi'+\psi'])(\ve)\,.$$
For our new function~$f$, its value of $[\phi]=[\phi+\psi]$ computed with reference to $\phi$ equals the left-hand side of this equation, while its value computed with reference to $\phi+\psi$ equals the right-hand side. This completes the proof that $f$ is well defined. Note that if $\phi$ is finite, then trivially $f[\phi] = f'[\phi]$, where $[\phi]$ is taken in~$H_1(G)$ and in~$H_1(|G|)$, respectively.

Since $f'$ is a homomorphism with image~$\vCC(G)$ (Section~\ref{comp}), Lemma~\ref{fextension} implies that so is~$f$. Indeed, for a proof that $f([\phi])\in\vCC(G)$ consider the finite oriented cuts $\vF$ of~$G$, and apply Theorem~\ref{orthogonal} to any finite subchain $\phi'$ of $\phi$ containing all the simplices that meet this cut. The proof that $f$ is surjective is the same as in Section~\ref{comp}: every element of $\vCC(G)$ has the form $f([\tau])$ with $\tau$ a single loop. Thus in fact,
\begin{equation*}
\vCC(G)\subseteq f(H'_1(G))\subseteq f(H_1(G))\subseteq\vCC(G)
\end{equation*}
with equality.

The proof of $\im f\subseteq\vCC(G)$ indicated above uses critically that  infinite cycles are of finite character. The following example illustrates that it also depends critically on our rule that chains must be locally finite. Let $G=\RR$, with vertex set~$\Z$. For vertices $m<n$ let $\sigma_{m,n}\colon [0,1]\to G$ interpolate linearly between $m$ and~$n$. Now consider the following sum of boundaries, each of the form $(\sigma_{k,m} + \sigma_{m,n} - \sigma_{k,n})$:
\begin{equation}\label{CancellingBoundaries}
\hskip-.2\textwidth \sum_{i\in\Z}\sigma_{i,i+1}\ = \
   \sum_{i\in\N}(\sigma_{-(i+1),i}+\sigma_{i,i+1}-\sigma_{-(i+1),i+1})
\end{equation}
   \vskip-18pt
$$\hskip.3\textwidth
+\sum_{i\in\N}
   (\sigma_{-(i+2),-(i+1)}+\sigma_{-(i+1),(i+1)}-\sigma_{-(i+2),i+1})\,.$$
The combined sum on the right-hand side of the equation is well defined in that every $\sigma_{m,n}$ occurs at most twice. It also satisfies the requirements for a standard representation of cycles in that it comes partitioned into finite cycles, even boundaries. The left-hand side, which is obtained from the right-hand side by deleting cancelling pairs of simplices, is a well-defined locally finite chain with zero boundary. But it is not (and should not be) an element of~$Z_1(G)$. This is not a contradiction only because its apparent `standard representation' as the combined sum on the right is not locally finite, and hence not a legal chain: the point~0, for example, lies in the image of every~$\sigma_{-n,m}$.

Our final goal is to show that $f$ is injective. For finite~$G$, the standard proof is to rewrite a given cycle $z\in Z_1(G)$ as a homologous sum of simplexes each traversing exactly one edge. If $[z]\in\ke f$, every edge is traversed equally often in both directions, and we can pair up the simplices traversing it accordingly. Each pair is homologous to a boundary, and hence so is~$z$.

The reason why this proof does not work for $f'$ on~$H_1(|G|)$ is that the simplices even in a finite cycle can traverse infinitely many edges. The proof would therefore require us to break up the given finite cycle into a `homologous' infinite chain, which is impossible in~$H_1(|G|)$.

In our new setup, however, this can indeed be done. In fact, it turns out that our restriction that any boundary chains to be added must be locally finite exactly strikes the balance between being restrictive enough to rule out counterexamples like the one above and being general enough to allow the subdivision into chains of single edges even of complicated cycle like our non-injectivity example from Section~\ref{comp}.

This is shown in the following lemma. Although its proof looks somewhat technical, the idea is very simple, so let us describe it informally first. Consider a 1-simplex $\tau$ traversing infinitely many edges. Our task is to `subdivide it infinitely often', into 1-simplices $\sigma_1,\sigma_2,\dots$ each traversing exactly one edge, by adding a locally finite sum of boundaries. We begin by targeting the first pass of $\tau$ through an edge, $e = uv$~say. Let $\sigma_1$ be this pass, and let $\tau'$ and $\tau''$ be the segments of $\tau$ before and after~$\sigma_1$. We now subdivide $\tau$ at $u$ and~$v$: we add to $\tau$ the boundary $\tau' + \sigma_1 + \tau'' - \tau$, to obtain the chain $\tau' + \sigma_1 + \tau''$. Next, we target the second pass of $\tau$ through an edge,~$\sigma_2$. If this is a pass of~$\tau'$, say, with segments $\alpha$ and $\beta$ before and after~$\sigma_2$, we add the boundary $\alpha + \sigma_2 + \beta - \tau'$ to insert $\sigma_2$ into our chain while eliminating~$\tau'$. Doing this for all passes of $\tau$ in turn should leave us at the limit with only the chain $\sigma_1 + \sigma_2 +\dots$, since all other simplices are eliminated again when the earliest pass they contain is targeted. The main task of the formal proof of this, except for the inevitable book-keeping, is to ensure that all the boundaries we add do indeed form a locally finite chain, i.e.\ an element of~$B_1(G)$.

\begin{lemma}\label{InfSubdiv}
For every $z\in Z'_1(G)$ there exist a chain $\phi = \sum_{i\in I} \sigma_i\in Z_1(G)$ and a chain $b\in B_1(G)$ such that $z+b=\phi$, every $\sigma_i$ maps $[0,1]$ homeomorphically to some edge~$e$, and all these edges $e$ as well as the images of the simplices in $b$ are contained in the image of the 1-simplices in~$z$.
\end{lemma}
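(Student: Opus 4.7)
My plan is to implement the informal strategy described just before the lemma: for each 1-simplex in~$z$, iteratively subdivide it at the endpoints of its passes through edges and replace each pass by the corresponding edge-homeomorphism, packaging the accumulated corrections as a locally finite boundary. First, using Lemma~\ref{subdivide} in reverse (to concatenate simplices at shared endpoints) together with $\partial z = 0$, I would reduce modulo a finite boundary to the case where $z$ is a finite sum of loops based at vertices; then each loop $\tau_j$ has $\partial\tau_j = 0$ and can be processed independently.

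Next I would fix a normal spanning tree $T$ of $G$ and, for each $n$, let $K_n$ be the finite set of edges with both endpoints in the first $n$ levels of~$T$. For each $\tau_j$, enumerate its passes $\pi^j_1, \pi^j_2, \ldots$ so that the passes through edges of $K_n$ form an initial segment for every $n$; this is possible since $K_n$ is finite and each edge is traversed only finitely often (Lemma~\ref{pass}). Then run the subdivision: at step $n$, identify the current leftover $\rho = \tau_j|_{[c,d]}$ containing the parameter interval $[a^j_n, b^j_n]$ of the $n$-th pass, apply Lemma~\ref{subdivide} twice to split $\rho$ at $a^j_n$ and~$b^j_n$, and homotope the now-isolated pass to $\sigma^j_n := \theta_{e^j_n}^{\pm 1}$ via a 2-simplex in the contractible closed edge $\overline{e^j_n}$. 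Each step contributes three 2-simplex boundaries, all with images in $\tau_j([0,1])$.

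Setting $\phi := \sum_{j,n}\sigma^j_n$ and $b$ to be the sum of all introduced 2-simplex boundaries (including from the initial reduction stage), the identity $z + b = \phi$ holds by a telescoping bookkeeping check, and the image-containment conditions are immediate from the construction. The membership $\phi \in Z_1(G)$ follows automatically, since $Z_1(G)$ is closed under addition and contains both $z \in Z'_1(G) \subseteq Z_1(G)$ and $b \in B_1(G) \subseteq Z_1(G)$.

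The hard part, exactly as the paper warns, is showing that the 2-chain witnessing $b \in B_1(G)$ is locally finite in~$G$. Fix $x \in G$. The edge-homotopy 2-simplices containing~$x$ come only from passes through edges incident to $x$, so there are finitely many. The subdivision 2-simplices have images equal to those of the split leftovers; here the plan is to pick $N$ with $x \in K_N$ and argue that, once all passes through $K_N$ have been processed (a finite stage of the enumeration), every subsequent leftover's parameter interval is disjoint from the pass intervals through $K_N$, so the image of $\tau_j$ on such an interval cannot fully cross any edge of $K_N$ and is confined to a single component of $|G|\setminus K_N$; by construction of $T$ each such component lies inside a basic open neighbourhood of some end and hence avoids $x$. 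Additional care is required for residual dips into edges of $K_N$ reaching~$x$ (only finitely many, since dips into a single edge accumulate at an endpoint of the edge with depth tending to~$0$, so only finitely many are deep enough to reach~$x$) and for pause intervals of $\tau_j$ at vertices of~$K_N$, which have to be pre-processed in a way that keeps the accumulated 2-chain locally finite at the pause vertex---this last bookkeeping is the main technical burden of the proof.
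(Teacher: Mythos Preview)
Your plan follows the paper's strategy---iteratively subdividing each loop at the endpoints of its edge-passes and replacing each pass by an edge-homeomorphism---but you are making the local-finiteness argument harder than it needs to be, and some of your sketch is not correct as stated.

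First, the normal spanning tree and the $K_n$-ordered enumeration of passes are unnecessary. The paper enumerates the passes $\pi_1,\pi_2,\dotsc$ of $\tau_0$ in an \emph{arbitrary} order and keeps track, at each stage~$n$, of the current leftover segments $\tau_j$ ($j\in J_n$); the invariant is that the edge-passes remaining in these leftovers are exactly $\pi_{n+1},\pi_{n+2},\dotsc$. The local-finiteness check for the split leftovers $\tau_{j(n)}$ is then a one-line observation: if $\tau_{j(n)}$ meets a neighbourhood $U\subset\bigcup E_x$ of a point~$x$, then (being a path between vertices that also traverses the edge of~$\pi_n$) it must traverse some edge in~$E_x$ or adjacent to one; that traversal is some~$\pi_k$, and by the invariant $k\ge n$. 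Since $\tau_0$ has only finitely many passes through edges at or adjacent to~$x$, this bounds~$n$. No special ordering, no~$K_N$, no pre-processing of pause intervals.

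Second, your sentence ``is confined to a single component of $|G|\setminus K_N$; by construction of~$T$ each such component lies inside a basic open neighbourhood of some end'' is not right: a leftover can dip into an edge of~$K_N$ without traversing it (so its image need not lie in $|G|\setminus K_N$ at all), and the components of $|G|$ minus finitely many edges certainly include finite pieces that are not contained in end-neighbourhoods. You recognise this and try to patch it with the ``residual dips'' and ``pause intervals'' remarks, but the patching you describe (pre-processing pauses, bounding dip depths) is exactly the bookkeeping that the paper's argument avoids. The terminal leftovers---those that never get split again---are automatically null-homotopic loops based at vertices, and only finitely many are based at any given vertex because each one is created at the step processing a pass through an edge at that vertex.

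In short: keep your subdivision procedure, drop the tree and the $K_n$-ordering, and replace your component argument by the observation above.
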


\begin{proof}
We may clearly assume that $z$ is an elementary cycle consisting of a single loop~$\tau_0$ that is based at a vertex and is not null-homotopic. In particular, $\tau_0$ traverses an edge. Since $\tau_0$ traverses every edge only finitely often (Lemma~\ref{pass}), $\tau_0$~contains only countably many passes through edges, $\pi_1, \pi_2, \ldots$ say, which we reparametrize as maps from~$[0,1]$.

In each of at most $\omega$ steps we shall add to our then current finite cycle
 $$z_n = \sum_{i=1}^n \sigma_i + \sum_{j\in J_n} \tau_j$$
(which initially is $z_0 = \tau_0$) finitely many simplices $\sigma_i$ or $\tau_j$ with coefficients 1 or~$-1$ so that the sum of simplices added lies in~$B_1(G)$. We shall make sure that all these simplices added or deleted form a good family; in particular, their sum will not depend on the order of summation, although this order will help us with our book-keeping. The result will be a chain of the form $\sum_{i\in I} \sigma_i + \sum_{j\in J} \tau_j$ in which every $\tau_j$ is a null-homotopic loop (in particular $0\notin J$) and the $\sigma_i$ are those required in the statement of the lemma.

We shall choose the $z_n$ inductively so as to satisfy the following conditions, which hold for $n=0$ with $J_0 = \{0\}$:

\begin{enumerate}[(i)]
\item\label{gen} $\sigma_1,\ldots,\sigma_n$ and all $\tau_j$ ($j\in J_n$) are paths in $|G|$ between (possibly identical) vertices;
\item\label{inherit} if $n\ge 1$, every $\tau_j$ ($j\in J_n$) is a segment of some $\tau_i$ with $i\in J_{n-1}$;
\item\label{j} if $n\ge 1$, there exists $j(n)$ such that $J_{n-1}\sm J_n = \{j(n)\}$ and the finite chain $b_n := \sigma_n - \tau_{j(n)} + \sum_{j'\in J_n\sm J_{n-1}} \tau_{j'}$ lies in~$B_1(G)$;
\item $\sigma_n$ is homotopic to $\pi_n$ relative to~$\{0,1\}$;
\item\label{passes} suitably reparametrized, $(\pi_{n+1},\pi_{n+2},\dots)$ is the family of all edge-passes of the paths~$\tau_j$ ($j\in J_n$); specifically, the edge-passes in the paths $\tau_{j'}$ with $j'\in J_n\sm J_{n-1}$ are precisely those in $\tau_{j(n)}$ other than~$\pi_n$.
\end{enumerate}

Assuming that $z_{n-1}$ satisfies these conditions, let us define~$z_n$. If $\pi_n$ does not exist, we terminate the construction, putting $I:= \{1,\dots,n-1\}$ and $J:= J_{n-1}$. If it does, then by~\eqref{passes} for $n-1$ there is a unique $j\in J_{n-1}$ such that $\pi_n$ is an edge-pass in~$\tau_j$. The path $\tau_j$ is a concatenation of three segments $\alpha$, $\pi_n$, and~$\beta$, where $\alpha$ and $\beta$ may have trivial domain. Let $J_n$ be obtained from $J_{n-1}$ by removing $j =: j(n)$ and adding new indices $j',j''$ for $\alpha =: \tau_{j'}$ and $\beta =: \tau_{j''}$ whenever these maps are paths (i.e., have non-trivial domain), reparametrizing each to domain~$[0,1]$. Let $\sigma_n$ be an injective path that is homotopic to $\pi_n$ relative to~$\{0,1\}$. Clearly, $z_n$~again satisfies the conditions. If the process continues for $\omega$ steps, we complete it by putting $I:=\N$, and letting $J:= \bigcap_{n\in\N}\bigcup_{k>n} J_k$ consist of those $j$ that are eventually in~$J_n$.

Let us take a look at the simplices $\tau_j$ with $j\in J$. By definition of~$J$, we have $j\in J_n$ for all large enough~$n$. By \eqref{gen} and~\eqref{inherit}, $\tau_j$~is a segment of~$\tau_0$ between two vertices, and by~\eqref{passes} it contains none of the passes $\pi_1,\pi_2,\ldots$. So it does not traverse any edge. Hence,
\begin{equation}\label{meander}
 \tau_j\text{ \em is a null-homotopic loop based at a vertex.}
\end{equation}
Notice that for only finitely many $j\in J$ can $\tau_j$ be based at the same vertex~$v$. Indeed, given $j\in J$, let $n$ be the unique integer such that $j\in J_n\sm J_{n-1}$. Then either $j=0$, or $\tau_j$ is a segment of $\tau_0$ followed or preceded by~$\pi_n$, and hence $\pi_n$ is a pass through an edge at~$v$. Since $\tau_0$ contains only finitely many such passes, this can happen for only finitely many~$n$, and indices $j$ first appearing in $J_n$ for different $n$ are distinct.

Next, let us show the following:
\begin{txteq}\label{jumble}
The family of all simplices added or deleted in the construction, that is, of all~$\sigma_i$ ($i\in I$), all~$\tau_j$ ($j\in J$) and all~$\tau_{j(n)}$, is locally finite and hence good.
\end{txteq}
To prove~\eqref{jumble}, let $x$ be any point in~$G$. If $x$ is a vertex, let $E_x$ be the set of edges at~$x$; if $x\in\interior e$ for an edge~$e$, let $E_x := \{e\}$. Choose an open neighbourhood $U$ of $x$ contained in~$\bigcup E_x$. Since $\tau_0$ traverses each edge in $E_x$ only finitely often, only finitely many of the paths $\sigma_i$ ($i\in I$) meet~$U$. Similarly, any path $\tau_j$ with $j\in J$ that meets $U$ must be based at a vertex incident with an edge in~$E_x$. Since there are only finitely many such vertices, and at each only finitely many $\tau_j$ are based, only finitely many $\tau_j$ with $j\in J$ meet~$U$. Finally, consider a path~$\tau_{j(n)}$. This path traverses an edge (in~$\pi_n$), so if it meets $U$ it must also traverse an edge in~$E_x$ or adjacent to an edge in~$E_x$. Only finitely many of the passes $\pi_k$ traverse such edges. By~\eqref{passes}, any $\tau_j$ containing $\pi_k$ satisfies $j\in J_1\cup \ldots\cup J_{k-1}$, so $j(n)\in J_1\cup\ldots\cup  J_{k-1}$ for the largest such~$k$. Since this is a finite set and the map $n\mapsto j(n)$ is injective, only finitely many $n$ are such that $\tau_{j(n)}$ meets~$U$. This completes the proof of~\eqref{jumble}.

To complete the proof, we show that $z+b=\phi$ for $b:= \sum_{i\in I} b_i - \sum_{j\in J} \tau_j$, and in particular that $b\in B_1(G)$. By~\eqref{jumble}, the family of all simplices in~$b$ is good, so $b\in B_1(G)$ by \eqref{j} and~\eqref{meander}. Likewise, the family of all~$\sigma_i$ is good. Since
$$z + \sum_{i\in I} b_i =
     \sum_{i\in I} \sigma_i + \sum_{j\in J} \tau_j$$
by construction, we deduce that $z + b = \sum_{i\in I}\sigma_i = \phi$ as desired.
\end{proof}

We can now easily complete the proof that our function $f\colon H_1(G)\to \vCC(G)$ is injective. Consider any $[z]\in\ke f$. As $z\in Z_1(G)$, it has a standard representation as $z = \sum_{j\in J} z_j$ with all $z_j\in Z'_1(G)$. By Lemma~\ref{InfSubdiv}, there are $b_j\in B_1(G)$ ($j\in J$) such that $z_j + b_j = \phi_j$, where $\phi_j = \sum_{i\in I_j}\sigma_i$ is a chain of simplices each traversing exactly one edge, and these edges as well as the images of the simplices in $b_j$ lie in the image of~$z_j$. The fact that $z$ is a locally finite chain therefore implies that so are
 $$b:=\sum_{j\in J} b_j \emtext{~~and~~} \phi := \sum_{j\in J} \phi_j\,.$$
 Indeed, every $x\in G$ has an open neighbourhood $U$ that meets the images of simplices in $z_j$ for only finitely many~$j$; let $J_x$ be the set of those~$j$. Hence $U$~does not meet the images of any simplices in $b_j$ or $\phi_j$ for $j\notin J_x$. For each $j\in J_x$, we can find an open neighbourhood $U_j\sub U$ of $x$ that meets only finitely many simplices in $b_j$ or~$\phi_j$, because $b_j$ and $\phi_j$ are well-defined chains. The intersection of these finitely many $U_j$ thus is an open neighbourhood of $x$ that meets only finitely many simplices in $b$ or in~$\phi$, showing that $b$ and $\phi$ are well-defined chains.

For $I:=\bigcup_{j\in J} I_j$, we  thus have $Z_1\owns z + b = \phi = \sum_{i\in I}\sigma_i$, with $b\in B_1(G)$. Since $[z]\in\ke f$, we thus have $[\phi]\in\ke f$. Therefore the loops formed by the elementary cycles in $\phi = \sum_{i\in I}\sigma_i$ traverse, in total, each edge of $G$ equally often in both directions (Lemma~\ref{traverse}). Since each of the $\sigma_i$ traverses precisely one edge, we can thus pair them up into cancelling pairs $\sigma_i + \sigma_{i'}\in B_1(G)$, where $\sigma_i$ and $\sigma_{i'}$ traverse the same edge but in opposite directions. Hence $\phi = \sum_{i\in I}\sigma_i \in B_1(G)$, giving $z = \phi - b \in B_1(G)$ as desired.

\medbreak

We have thus shown that $f$ is an injective group homomorphism from $H_1(G)$ to $\vCC(G)$ whose restriction to $H'_1(G)$ still maps onto~$\vCC(G)$. Hence all these groups coincide, which is our second main result:

\begin{theorem}
The function $f$ is a group isomorphism between $H_1(G)$ and~$\vCC(G)$, as well as between $H'_1(G)$ and~$\vCC(G)$. In particular, $H'_1(G)=H_1(G)$.
\end{theorem}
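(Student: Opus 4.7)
The plan is essentially to assemble the pieces already established in this section and Section~\ref{comp} into a single statement; no new idea is needed beyond what has been developed for the injectivity argument. First, Lemma~\ref{fextension} (together with the independence check that follows it) shows that $f\colon H_1(G)\to\vEE(G)$ is a well-defined group homomorphism, and Theorem~\ref{orthogonal}, applied to the large enough finite subchains $\phi'$ provided by a standard representation of any cycle, gives $\im f\subseteq\vCC(G)$.

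For surjectivity I would reuse the construction of Lemma~\ref{surjectivity}: given $\phi\in\vCC(G)$ it yields a \emph{single} loop $\tau$ with $f'([\tau])=\phi$. Since a single loop is a finite 1-chain, $[\tau]\in H'_1(G)$ and, by the agreement $f[\tau]=f'[\tau]$ for finite chains noted just after the definition of $f$, we get $\phi=f([\tau])\in f(H'_1(G))$. Combined with $f(H_1(G))\subseteq\vCC(G)$ this yields the sandwich
\[
\vCC(G)\subseteq f(H'_1(G))\subseteq f(H_1(G))\subseteq\vCC(G),
\]
so both restrictions of $f$ are surjective onto~$\vCC(G)$.

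The main work is injectivity on $H_1(G)$, but this is exactly where Lemma~\ref{InfSubdiv} pays off. Given $[z]\in\ker f$ with standard representation $z=\sum_{j\in J}z_j$, apply Lemma~\ref{InfSubdiv} to each finite cycle $z_j$ to obtain $b_j\in B_1(G)$ and $\phi_j=\sum_{i\in I_j}\sigma_i$ with $z_j+b_j=\phi_j$ and each $\sigma_i$ traversing a single edge, all inside the image of $z_j$. The locality clause of that lemma ensures that $b:=\sum_j b_j$ and $\phi:=\sum_j\phi_j$ are still locally finite, hence legal chains, and $z+b=\phi$. So $[\phi]=[z]\in\ker f$; each simplex in $\phi$ traverses exactly one edge, so by Lemma~\ref{traverse} (and the vanishing of $f[\phi]$ on every oriented edge) the simplices pair up into cancelling couples $\sigma_i+\sigma_{i'}\in B_1(G)$, so $\phi\in B_1(G)$ and therefore $z=\phi-b\in B_1(G)$.

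Finally, the equality $H'_1(G)=H_1(G)$ is a formal consequence: $H'_1(G)\hookrightarrow H_1(G)\xrightarrow{f}\vCC(G)$ is surjective on the first arrow's composite with $f$ and $f$ itself is injective on $H_1(G)$, so the inclusion $H'_1(G)\hookrightarrow H_1(G)$ must be an equality. The only step requiring real work is the injectivity, and it has been reduced to the already-proved Lemma~\ref{InfSubdiv} together with the elementary pairing argument; the rest is bookkeeping.
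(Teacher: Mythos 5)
Your proposal is correct and follows essentially the same route as the paper: well-definedness via Lemma~\ref{fextension}, image in $\vCC(G)$ via Theorem~\ref{orthogonal} on large finite subchains, surjectivity onto $H'_1(G)$ via the single loop from Lemma~\ref{surjectivity}, and injectivity via Lemma~\ref{InfSubdiv} together with the local-finiteness check and the pairing of single-edge simplices. The deduction $H'_1(G)=H_1(G)$ from surjectivity of $f\restr H'_1(G)$ and injectivity of $f$ is exactly the paper's concluding step.
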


\section{Acknowledgement}

We thank Laurent Bartholdi for pointing out to us the connection between $\C(G)$ and the Cech homology of~$|G|$, as described in Section~\ref{Cech}.

\bibliographystyle{amsplain}
\bibliography{collective}

\providecommand{\bysame}{\leavevmode\hbox to3em{\hrulefill}\thinspace}
\providecommand{\MR}{\relax\ifhmode\unskip\space\fi MR }
% \MRhref is called by the amsart/book/proc definition of \MR.
\providecommand{\MRhref}[2]{%
  \href{http://www.ams.org/mathscinet-getitem?mr=#1}{#2}
}
\providecommand{\href}[2]{#2}
\begin{thebibliography}{10}

\bibitem{AbelsStrantzalos}
H.~Abels and P.~Strantzalos, \emph{Proper transformation groups}, in
  preparation.

\bibitem{BauesQuintero}
H.-J. Baues and A.~Quintero, \emph{Infinite {H}omotopy {T}heory}, Kluwer
  Academic Publ., 2001.

\bibitem{LocFinTutte}
H.~Bruhn, \emph{The cycle space of a $3$-connected locally finite graph is
  generated by its finite and infinite peripheral circuits}, J.~Combin.\ Theory
  (Series B) \textbf{92} (2004), 235--256.

\bibitem{Duality}
H.~Bruhn and R.~Diestel, \emph{Duality in infinite graphs}, Comb.,\ Probab.\
  Comput. \textbf{15} (2006), 75--90.

\bibitem{Partition}
H.~Bruhn, R.~Diestel, and M.~Stein, \emph{Cycle-cocycle partitions and faithful
  cycle covers for locally finite graphs}, J.~Graph Theory \textbf{50} (2005),
  150--161.

\bibitem{Basis}
H.~Bruhn and A.~Georgakopoulos, \emph{Bases and closed spaces with infinite
  sums}, Preprint 2007.

\bibitem{LocFinMacLane}
H.~Bruhn and M.~Stein, \emph{Mac{L}ane's planarity criterion for locally finite
  graphs}, J.~Combin.\ Theory (Series B) \textbf{96} (2006), 225--239.

\bibitem{Degree}
\bysame, \emph{On end degrees and infinite circuits in locally finite graphs},
  Combinatorica \textbf{27} (2007), 269--291.

\bibitem{CannonConnerER}
J.W. Cannon and G.R. Conner, \emph{The combinatorial structure of the
  {H}awaiian {E}arring group}, Topology Appl. \textbf{106} (2000), 225--271.

\bibitem{CyclesIntro}
R.~Diestel, \emph{The cycle space of an infinite graph}, Comb.,\ Probab.\
  Comput. \textbf{14} (2005), 59--79.

\bibitem{DiestelBook05}
\bysame, \emph{Graph {T}heory \emph{(3rd edition)}}, Springer-Verlag, 2005, \\
  Electronic edition available at:\\ {\small\tt
  http://www.math.uni-hamburg.de/home/diestel/books/graph.theory} .

\bibitem{diestelESST}
\bysame, \emph{End spaces and spanning trees}, J.~Combin.\ Theory (Series B)
  \textbf{96} (2006), 846--854.

\bibitem{RDsBanffSurvey}
\bysame, \emph{Locally finite graphs with ends: a topological approach},
  Hamburger Beitr.\ Math. \textbf{340} (2009), see\\ {\small\tt
  http://www.math.uni-hamburg.de/math/research/preprints/hbm.html} .

\bibitem{Ends}
R.~Diestel and D.~K{\"u}hn, \emph{Graph-theoretical versus topological ends of
  graphs}, J.~Combin.\ Theory (Series B) \textbf{87} (2003), 197--206.

\bibitem{CyclesI}
\bysame, \emph{On infinite cycles {I}}, Combinatorica \textbf{24} (2004),
  68--89.

\bibitem{CyclesII}
\bysame, \emph{On infinite cycles {II}}, Combinatorica \textbf{24} (2004),
  91--116.

\bibitem{TST}
\bysame, \emph{Topological paths, cycles and spanning trees in infinite
  graphs}, Europ.\ J.\ Combinatorics \textbf{25} (2004), 835--862.

\bibitem{DiestelLeaderBGC}
R.~Diestel and I.~Leader, \emph{A proof of the {B}ounded {G}raph {C}onjecture},
  Invent.\ math. \textbf{108} (1992), 131--162.

\bibitem{DiestelLeaderNST}
\bysame, \emph{Normal spanning trees, {A}ronszajn trees and excluded minors},
  J.\ London Math.\ Soc. \textbf{63} (2001), 16--32.

\bibitem{FundGp}
R.~Diestel and P.~Spr\"ussel, \emph{The fundamental group of a locally finite
  graph with ends}, preprint 2008.

\bibitem{Hom2}
\bysame, \emph{On the homology of locally compact spaces with ends}, preprint
  2009.

\bibitem{Freudenthal31}
H.~Freudenthal, \emph{\"{U}ber die {E}nden topologischer {R}{\"a}ume und
  {G}ruppen}, Math.\ Zeitschr. \textbf{33} (1931), 692--713.

\bibitem{Freudenthal42}
\bysame, \emph{Neuaufbau der {E}ndentheorie}, Annals of Mathematics \textbf{43}
  (1942), 261--279.

\bibitem{FuchsViro}
D.B. Fuchs and O.Ya. Viro, \emph{Topology {II}}, Springer-Verlag, 2004.

\bibitem{AgelosFleisch}
A.~Georgakopoulos, \emph{Infinite {H}amilton cycles in squares of locally
  finite graphs}, To appear in {\em Advances in Mathematics}.

\bibitem{Geo}
A.~Georgakopoulos and P.~Spr\"ussel, \emph{Geodesic topological cycles in
  locally finite graphs}, preprint 2006.

\bibitem{halin64}
R.~Halin, \emph{{\"U}ber unendliche {W}ege in {G}raphen}, Math.\ Annalen
  \textbf{157} (1964), 125--137.

\bibitem{ElemTop}
D.W. Hall and G.L. Spencer, \emph{Elementary topology}, John Wiley, New York
  1955.

\bibitem{Hatcher}
A.~Hatcher, \emph{Algebraic {T}opology}, Cambrigde Univ.\ Press, 2002.

\bibitem{jung69}
H.A. Jung, \emph{Wurzelb{\"a}ume und unendliche {W}ege in {G}raphen}, Math.\
  Nachr. \textbf{41} (1969), 1--22.

\bibitem{jung71}
\bysame, \emph{Connectivity in infinite graphs}, Studies in {P}ure
  {M}athematics (L.~Mirsky, ed.), Academic Press, 1971.

\bibitem{KroenEnds}
B.~Kr\"on, \emph{End compactifications in non-locally-finite graphs}, Math.\
  Proc.\ Cambridge Phil.\ Soc. \textbf{131} (2001), 427--443.

\bibitem{LundellWeingram}
A.T. Lundell and S.~Weingram, \emph{Topology of {CW}-complexes},
  Springer-Verlag, 1969.

\bibitem{RoggiEndsI}
R.~M\"oller, \emph{Ends of graphs}, Math.\ Proc.\ Cambridge Phil.\ Soc.
  \textbf{111} (1992), 255--266.

\bibitem{RoggiEndsII}
\bysame, \emph{Ends of graphs~{II}}, Math.\ Proc.\ Cambridge Phil.\ Soc.
  \textbf{111} (1992), 455--460.

\bibitem{Arboricity}
M.~Stein, \emph{Arboriticity and tree-packing in locally finite graphs},
  J.~Combin.\ Theory (Series B) \textbf{96} (2006), 302--312.

\bibitem{ThomassenVellaContinua}
C.~Thomassen and A.~Vella, \emph{Graph-like continua, augmenting arcs, and
  {M}enger's theorem}, Combinatorica \textbf{29}, DOI:
  10.1007/s00493-008-2342-9.

\bibitem{ThomassenWoess}
C.~Thomassen and W.~Woess, \emph{Vertex-transitive graphs and accessibility},
  J.~Combin.\ Theory (Series B) \textbf{58} (1993), 248--268.

\bibitem{woessBook}
W.~Woess, \emph{Random walks on infinite graphs and groups}, Cambridge
  University Press, 2002.

\end{thebibliography}

\small
\parindent=0pt
\vskip2mm plus 1fill

\begin{tabular}{cc}
\begin{minipage}[t]{0.5\linewidth}
Reinhard Diestel\\
Mathematisches Seminar\\
Universit\"at Hamburg\\
Bundesstra\ss e 55\\
20146 Hamburg\\
Germany\\
\end{minipage} 
&
\begin{minipage}[t]{0.5\linewidth}
Philipp Spr\"ussel\\
Mathematisches Seminar\\
Universit\"at Hamburg\\
Bundesstra\ss e 55\\
20146 Hamburg\\
Germany\\
\end{minipage}
\end{tabular} 

%\smallskip
%Version 28.10.2009
\end{document}